\title{The \texorpdfstring{$R$}{R}-matrix formalism for quantized enveloping algebras}
\author[S. Gautam]{Sachin Gautam}
\address{Department of Mathematics, The Ohio State University}
\email{gautam.42@osu.edu}
\author[M. Rupert]{Matthew Rupert}
\address{Department of Mathematics and Statistics, University of Saskatchewan}
\email{matthew.rupert@usask.ca}
\author[C. Wendlandt]{Curtis Wendlandt}
\address{Department of Mathematics and Statistics, University of Saskatchewan}
\email{wendlandt@math.usask.ca}
\subjclass[2020]{Primary 17B37; Secondary 17B62} 
\newtheorem{theorem}{Theorem}[section]
\newtheorem{proposition}[theorem]{Proposition}
\newtheorem{corollary}[theorem]{Corollary}
\newtheorem{lemma}[theorem]{Lemma}
\newtheorem{theoremintro}{Theorem}
\theoremstyle{definition}
\newtheorem{definition}[theorem]{Definition}
\newtheorem{remark}[theorem]{Remark}
\newcommand{\wh}{\widehat}
\newcommand{\id}{\mathrm{id}}
\newcommand{\End}{\mathrm{End}}
\newcommand{\Hom}{\mathrm{Hom}}
\newcommand{\Ker}{\mathrm{Ker}}
\newcommand{\iso}{\xrightarrow{\,\smash{\raisebox{-0.5ex}{\ensuremath{\scriptstyle\sim}}}\,}}
\newcommand{\into}{\hookrightarrow}
\newcommand{\onto}{\twoheadrightarrow}
\newcommand{\sbullet}{%
  \hbox{\fontfamily{lmr}\fontsize{.4\dimexpr(\f@size pt)}{0}\selectfont\textbullet}}
\newcommand{\mfa}{\mathfrak{a}}
\newcommand{\mfb}{\mathfrak{b}}
\newcommand{\mfg}{\mathfrak{g}}
\newcommand{\mfh}{\mathfrak{h}}
\newcommand{\mfn}{\mathfrak{n}}
\newcommand{\mfp}{\mathfrak{p}}
\newcommand{\mfq}{\mathfrak{q}}
\newcommand{\mfs}{\mathfrak{s}}
\newcommand{\mfz}{\mathfrak{z}}
\newcommand{\mfZ}{\mathfrak{Z}}
\newcommand{\mfgl}{\mathfrak{g}\mathfrak{l}}
\newcommand{\mfsl}{\mathfrak{s}\mathfrak{l}}
\newcommand{\mcA}{\mathcal{A}}
\newcommand{\mcI}{\mathcal{I}}
\newcommand{\mcJ}{\mathcal{J}}
\newcommand{\mcR}{\mathcal{R}}
\newcommand{\mcV}{\mathcal{V}}
\newcommand{\mcW}{\mathcal{W}}
\newcommand{\mbI}{\mathbf{I}}
\newcommand{\mbL}{\mathbf{L}}
\newcommand{\mbT}{\mathbf{T}}
\newcommand{\mbbL}{\mathbb{L}}
\newcommand{\C}{\mathbb{C}}
\newcommand{\Z}{\mathbb{Z}}
\newcommand{\N}{\mathbb{N}}
\newcommand{\mrL}{\mathrm{L}}
\newcommand{\mrR}{\mathrm{R}}
\newcommand{\mrT}{\mathrm{T}}
\newcommand{\msA}{\mathsf{A}}
\newcommand{\msd}{\mathsf{d}}
\newcommand{\msF}{\mathsf{F}}
\newcommand{\msG}{\mathsf{G}}
\newcommand{\msH}{\mathsf{H}}
\newcommand{\msL}{\mathsf{L}}
\newcommand{\msQ}{\mathsf{Q}}
\newcommand{\msR}{\mathsf{R}}
\newcommand{\msS}{\mathsf{S}}
\newcommand{\msT}{\mathsf{T}}
\newcommand{\msX}{\mathsf{X}}
\newcommand{\msz}{\mathsf{z}}
\newcommand{\msZ}{\mathsf{Z}}
\newcommand{\eps}{\epsilon}
\newcommand{\veps}{\varepsilon}
\numberwithin{equation}{section}
\DeclareMathAlphabet{\mathcalligra}{T1}{calligra}{m}{n}
\DeclareFontShape{T1}{calligra}{m}{n}{<->s*[2.2]callig15}{}
\newcommand{\scriptr}{\mathcalligra{r}\,}
\definecolor{darkgreen}{RGB}{28,133,26}
\definecolor{darkblue}{rgb}{0.2, 0.2, 0.6}
\newcommand{\Uhg}{U_{\hbar}\mfg}
\newcommand{\Uh}[1]{U_\hbar{#1}}
\newcommand{\UR}[1]{\mathrm{U}_{\mathrm{R}}({#1})}
\newcommand{\sbinom}[2]{\begin{bmatrix}{#1}\\{#2}\end{bmatrix}}
\newcommand{\Casimir}{\mathfrak{c}}
\newcommand{\Root}{\mathsf{\Phi}}
\def\chk#1{#1^{\smash{\scalebox{.7}[1.4]{\rotatebox{90}{\textnormal{\guilsinglleft}}}}}}
\begin{document}

\begin{abstract}
Let  $\Uhg$ denote the Drinfeld--Jimbo quantum group associated to a complex semisimple Lie algebra $\mfg$. We apply a modification of the $R$-matrix construction for quantum groups to the evaluation of the universal $R$-matrix of $\Uh{\mfg}$ on the tensor square of any of its finite-dimensional representations. This produces a quantized enveloping algebra $\UR{\mfg}$ whose definition is given in terms of two generating matrices satisfying variants of the well-known $RLL$ relations. 
We prove that $\UR{\mfg}$ is isomorphic to the tensor product of the quantum double of the Borel subalgebra $\Uh{\mfb}\subset \Uh{\mfg}$ and a quantized polynomial algebra encoded by the space of $\mfg$-invariants associated to the semiclassical limit $V$ of the underlying finite-dimensional representation of $\Uh{\mfg}$. Using this description, we characterize $\Uh{\mfg}$ and the quantum double of $\Uh{\mfb}$ as Hopf quotients of $\UR{\mfg}$ and as fixed-point subalgebras with respect to certain natural automorphisms.
As an additional corollary, we deduce that $\UR{\mfg}$ is quasitriangular precisely when the irreducible summands of $V$ are distinct. 
\end{abstract}

\maketitle

{\setlength{\parskip}{0pt}
\setcounter{tocdepth}{1} 
\tableofcontents
}

\section{Introduction} \label{sec:I}

\subsection{}\label{ssec:I-broad}

Let $\mfg$ be a simple Lie algebra over the complex numbers, and let $\Uh{\mfg}$ be its standard quantization over $\C[\![\hbar]\!]$, as first defined in generality by Drinfeld \cites{DR,DrQG} and Jimbo \cites{Jimbo85}; see Section \ref{ssec:Uhg}. An important feature of $\Uh{\mfg}$ is that it is quasitriangular, and thus possesses a universal $R$-matrix which can be evaluated on the tensor square of any of its finite-dimensional representations $\mcV$ (see Section \ref{ssec:Uhg-fdreps}) to produce a solution $\mrR$ of the quantum Yang--Baxter equation associated to $\mcV$. In this article, we address the problem of rebuilding $\Uhg$ uniformly from only this data. 

This problem has a storied history closely tied to the remarkable foundational paper \cite{FRT} of Faddeev, Reshetikhin and Takhtajan, and the work of Faddeev and his contemporaries on the quantum inverse scattering method that preceded it; see \cites{FST79, FT79, Faddeev80, KuSk-82} in addition to \cite{Majid-book}*{\S4.4} or \cite{CPBook}*{\S7.5}, for example. To elaborate, the article \cite{FRT} laid the foundation for a general theory of quantum groups centered around the two matrix equations 
\begin{equation*}
\mrR L_1 L_2= L_2 L_1 \mrR \quad \text{ and }\quad \mrR_{12}\mrR_{13}\mrR_{23}=\mrR_{23}\mrR_{13}\mrR_{12},
\end{equation*}
which are the algebraic relations at the heart of the quantum inverse scattering method. The latter equation is the aforementioned quantum Yang--Baxter equation, while the former is often taken to be a defining relation for an algebra whose generators are encoded as the coefficients of the matrix $L$. We refer the reader to \cite{FRT} and \cite{DrQG}*{\S10-11}, for instance, for the general meaning of these equations; our use of them shall be made transparent in Sections \ref{ssec:I-results-a}, \ref{ssec:Notation} and \ref{ssec:URg-def}. 

The techniques developed in \cite{FRT}, which are now collectively referred to as the $R$-\textit{matrix} (or \textit{FRT}) \textit{formalism} for quantum groups, output several examples of concrete, well-known quantum algebras and have played an instrumental role in the development of the representation theory of quantum groups over the last four decades; we refer the reader to the textbooks \cite{Majid-book}*{\S4}, \cite{CPBook}*{\S7}, \cite{KS-book}*{Parts II--III} in addition to \cite{MoBook} for an extensive overview. One of the key points stressed in \cite{FRT} is that this formalism is compatible with Drinfeld's theory of quantized enveloping algebras, in that it can be applied to recover the quantum groups $\Uhg$ introduced by Drinfeld and Jimbo. To illustrate this concretely, a solution to the problem of rebuilding $\Uhg$ from $\mrR$ and $\mcV$ (as in the first paragraph of \S\ref{ssec:I-broad}) was outlined in the special case where $\mfg$ is of classical type and $\mcV$ is taken to be the vector representation of $\Uhg$; see Theorems 12 and 18 of \cite{FRT}, in addition to Theorem 33 of \cite{KS-book} and Section \ref{sec:Ex} below. Similar results have since been established in other isolated cases; for instance, in \cite{Sasaki}*{Thm.~11} this was extended to the case where $\mfg$ is of type $\mathsf{G}_2$ and $\mcV$ is a quantization of its seven dimensional fundamental representation.

In this article, we further intertwine the $R$-matrix techniques originating from the quantum inverse scattering method with the theory of quantized enveloping algebras in order to provide a uniform solution to the above reconstruction problem for any $\mfg$ and non-trivial finite-dimensional representation $\mcV$ of $\Uhg$.

\subsection{The quantum algebra $\UR{\mfg}$}\label{ssec:I-results-a} 

We now turn towards providing a more detailed outline of our results. Let $\mcV$ be a finite-dimensional representation  of $\Uhg$ (see Sections \ref{ssec:Uhg-fdreps}) with the property that its semiclassical limit $V=\mcV/\hbar\mcV$ has a non-trivial composition factor as a representation of the Lie algebra $\mfg$ over $\C$. 

In Section \ref{ssec:URg-def}, we apply the approach of Faddeev, Reshetikhin and Tahktajan  to introduce a $\C[\![\hbar]\!]$-algebra $\UR{\mfg}$ associated to the evaluation $\mrR$ of the universal $R$-matrix of $\Uhg$ on the tensor square of $\mcV$. The defining relations of $\UR{\mfg}$ are given in terms of two elements $\mrT^+$ and $\mrT^-$ of $\End(V)\otimes_\C \UR{\mfg}$ and are such that the auxiliary matrices $\mrL^\pm=I+\hbar \mrT^\pm$ satisfy the familiar identities
\begin{equation*}
\mrR \mrL_1^\pm \mrL_2^\pm = \mrL_2^\pm\mrL_1^\pm\mrR \quad \text{ and }\quad \mrR \mrL_1^+ \mrL_2^- = \mrL_2^-\mrL_1^+\mrR,
\end{equation*}
in addition to two natural relations encoded by the weight space decomposition of the $\mfg$-module $\mfgl(V)$; see Definition \ref{D:URg}. The algebra $\UR{\mfg}$ plays a central role in this article. As is to be expected from the literature, it has several remarkable properties. For instance, it comes with a large collection of natural automorphisms and admits a Hopf algebra structure which is remarkably simple to describe in terms of $\mrT^\pm$ or $\mrL^\pm$; see Proposition \ref{P:aut} and Theorem \ref{T:URg-main}.

Let us now explain how $\UR{\mfg}$ relates to the quantized enveloping algebra $\Uhg$.
Let $\mfgl(V)^\mfg\subset \mfgl(V)$ be the Lie subalgebra of $\mfg$-invariants, which we view as a Lie bialgebra equipped with trivial Lie cobracket. Let $\mfz_V^+=(\mfgl(V)^\mfg)^\ast$ denote its Lie bialgebra dual, which is commutative as a Lie algebra, but in general has a non-trivial coalgebra structure. In Section \ref{ssec:quant-z}, we follow a standard procedure to introduce a quantization $\msS_\hbar(\mfz_V^+)$ of $\mfz_V^+$, which coincides with the trivial deformation $\msS(\mfz_V^+)[\![\hbar]\!]$ of the symmetric algebra on $\mfz_V^+$ as an algebra. We then prove the following theorem (see Theorem \ref{T:URg-main}), where $\msS_\hbar(\mfz_V^-)$ is the co-opposite Hopf algebra to $\msS_\hbar(\mfz_V^+)$. 
\begin{theoremintro}\label{T:intro}
There is an isomorphism of topological Hopf algebras 
\begin{equation*}
\Upsilon:\UR{\mfg}\iso D(\Uh{\mfb})\otimes \msS_\hbar(\mfz_V^+)\otimes \msS_\hbar(\mfz_V^-),
\end{equation*}
where $D(\Uh{\mfb})$ is the quantum double of the Borel subalgebra $\Uh{\mfb}\subset \Uhg$.
\end{theoremintro}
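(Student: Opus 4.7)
My plan is to establish the isomorphism $\Upsilon$ by combining an FRT-style realization of $D(\Uh{\mfb})$ via the universal $R$-matrix with a decomposition of the generators $\mrT^{\pm}$ along the $\mfg$-module structure on $\mfgl(V)$. The basic structural input is the $\mfg$-module splitting $\mfgl(V) = \mfgl(V)^{\mfg} \oplus \mfgl(V)^{\perp}$, obtained for instance via an invariant trace form, which induces a decomposition of each $\mrT^{\pm}$ into an invariant piece $\mrT^{\pm}_0$ valued in $\mfgl(V)^{\mfg} \otimes \UR{\mfg}$ and a complementary piece $\mrT^{\pm}_{\perp}$ valued in $\mfgl(V)^{\perp} \otimes \UR{\mfg}$.

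First, I would build the quantum double map $\Phi_D: D(\Uh{\mfb}) \to \UR{\mfg}$ in the spirit of Reshetikhin--Semenov-Tian-Shansky. Evaluating the universal $R$-matrix of $\Uh{\mfg}$ on the first tensor factor against $V$ yields matrices $\mcL^{\pm} \in \End(V) \otimes D(\Uh{\mfb})$ whose entries generate $D(\Uh{\mfb})$ and satisfy the $RLL$ and $RLL$-cross relations matching those of $\mrL^{\pm} = I + \hbar \mrT^{\pm}$; sending the entries of $\mcL^{\pm}$ to the $\mfgl(V)^{\perp}$-valued components of $\mrL^{\pm}$ then defines $\Phi_D$. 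Second, using that the semiclassical limit of $\mrR$ is $\mfg$-invariant, Schur's lemma forces $\mrR$ to act as a scalar on the invariant part of $V^{\otimes 2}$ modulo $\hbar$. This implies that the entries of $\mrT^{\pm}_0$ pairwise commute and commute with the image of $\Phi_D$, while the first-order corrections are controlled by the Lie cobracket on $\mfz_V^{\pm}$. This yields homomorphisms $\Phi_{\pm}: \msS_\hbar(\mfz_V^{\pm}) \to \UR{\mfg}$ whose images commute with $\Phi_D(D(\Uh{\mfb}))$ and with each other, and assembling the three maps produces a candidate inverse $\Upsilon^{-1}$.

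To see $\Upsilon^{-1}$ is a topological Hopf algebra isomorphism, I would use a PBW-type filtration on $\UR{\mfg}$ whose associated graded is a quotient of $U(\mfg \oplus \mfg) \otimes \msS(\mfz_V^+) \otimes \msS(\mfz_V^-)$, matching the semiclassical limit of the right-hand side. Since both algebras are topologically free over $\C[\![\hbar]\!]$ and agree modulo $\hbar$, $\Upsilon^{-1}$ is a $\C[\![\hbar]\!]$-module isomorphism. Hopf compatibility follows from the matrix-multiplicative form of the coproduct on $\mrL^{\pm}$, which restricts to the standard FRT coproduct on $D(\Uh{\mfb})$ and to the coproduct quantizing the Lie coalgebra of $\mfz_V^{\pm}$.

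The main obstacle I anticipate is making precise the decoupling of invariant and non-invariant entries under the $RLL$ cross-relations $\mrR \mrL_1^+ \mrL_2^- = \mrL_2^- \mrL_1^+ \mrR$: while Schur's lemma trivializes $\mrR$ on pure invariants, the mixed terms pairing $\mrT^{\pm}_0$ with $\mrT^{\mp}_{\perp}$ must be shown to impose no residual coupling between the two pieces in $\UR{\mfg}$. I expect this reduces to a block analysis along the isotypic decomposition $V = \bigoplus V_{\lambda}^{\oplus n_{\lambda}}$, where $\mrR$ is scalar-valued within each block and weight considerations separate the relations across distinct isotypic components. Establishing this decoupling cleanly, and then extracting the tensor product factorization from it, is the technical crux of the proof of Theorem \ref{T:intro}.
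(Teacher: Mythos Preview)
Your proposal has the right ingredients at the classical level but the key step fails at the quantum level. The core problem is your additive decomposition $\mrT^\pm = \mrT^\pm_0 + \mrT^\pm_\perp$ along $\mfgl(V)=\mfgl(V)^\mfg\oplus\mfgl(V)^\perp$: the $RLL$ relations \eqref{Eq:RLL2} are quadratic in $\mrL^\pm$, so projecting onto a summand of $\End(V)$ does not produce matrices that again satisfy $RLL$. In particular, your map $\Phi_D$ sending the entries of the universal $\mcL^\pm$ to the $\mfgl(V)^\perp$-components of $\mrL^\pm$ is not a homomorphism, and there is no reason for the coefficients of the naive invariant projection $\mrT^\pm_0$ to be central in $\UR{\mfg}$. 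At the classical level this works (this is exactly Lemma~\ref{L:Z-wtgr}: the $K^\pm$ there are central and $\msL^\pm$ satisfies the linear $r$-matrix relations), but the quantum central elements $\Upsigma^\pm$ are \emph{not} the $\mfgl(V)^\mfg$-projections of $\mrT^\pm$; cf.\ Corollary~\ref{C:Z-compute}, where they are logarithms of products of diagonal entries. So the ``decoupling'' you flag as the technical crux genuinely fails, not merely as a bookkeeping issue.

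The paper's proof avoids this by reversing the direction and replacing your additive splitting with a multiplicative one. It defines $\Upsilon:\UR{\mfg}\to D(\Uh{\mfb})\otimes\msS_\hbar(\mfz_V^+)\otimes\msS_\hbar(\mfz_V^-)$ by $\mrL^\pm\mapsto \msL_\omega^\pm\cdot\mathring{\mfZ}^\pm$, where $\msL_\omega^\pm=(\pi_\hbar\otimes\id)\big((\id\otimes\omega_\hbar)(R^D)^{\pm 1}_{(\cdot)}\big)$ is built from the universal $R$-matrix of $D(\Uh{\mfb})$ and $\mathring{\mfZ}^\pm=(I+\hbar\mfZ^\pm)^{\pm 1}$. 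Because $\mathring{\mfZ}^\pm$ lies in $\mfgl(V)^\mfg\otimes\msS_\hbar(\mfz_V^\pm)$ and $\mrR_\pi\in\pi_\hbar(\Uhg)^{\otimes 2}$, the factors $\mathring{\mfZ}^\pm$ commute past $\mrR_\pi$ and past $\msL_\omega^\pm$ in the first tensor slot, so the product $\msL_\omega^\pm\mathring{\mfZ}^\pm$ satisfies $RLL$ with no residual cross-terms. Bijectivity is then obtained by reducing modulo $\hbar$ to the classical Theorem~\ref{T:g-main} and invoking topological freeness; this replaces your PBW argument (note also that the semiclassical Lie algebra is $D(\mfb^+)\cong\mfg\oplus\mfh$, not $\mfg\oplus\mfg$). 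The Hopf compatibility then follows from the cabling identities for $R^D$ in Proposition~\ref{P:R-omega}\eqref{R-omega:3}.
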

Here the symbol $\otimes$ denotes the topological tensor product of $\C[\![\hbar]\!]$-modules (see Section \ref{ssec:h-adic}) -- 
we refer the reader to Theorem \ref{T:URg-main} for the precise statement of the theorem, including the definition of $\Upsilon$.
%
%
 Its proof makes crucial use of Theorem \ref{T:g-main}, established earlier in Section \ref{sec:g-r}, which shows that the Lie bialgebra $\mfg_\scriptr$ quantized by $\UR{\mfg}$ decomposes as $\mfg_\scriptr \cong D(\mfb)\oplus \mfz_V^+\oplus \mfz_V^-$, where $D(\mfb)$ is the Drinfeld double of $\mfb$. 
\subsection{Recovering $\Uhg$}\label{ssec:I-results-b}
Theorem \ref{T:intro} has several consequences and, in particular,  yields two uniform solutions to the reconstruction problem for $\Uhg$ stated at the beginning of Section \ref{ssec:I-broad}.
 The first solution describes $\Uhg$ as a Hopf algebra quotient of $\UR{\mfg}$ by the ideal generated by a family of distinguished central elements. Namely, if $\mrL_0^\pm$ denotes the zero weight component of $\mrL^\pm$ then $\mrL_0^+\, \mrL_0^-$ has coefficients in the center of $\UR{\mfg}$ and decomposes as a product of central factors
\begin{gather*}
\mrL_0^+\,\mrL_0^-=q^\Theta \cdot (I+\hbar \Upsigma^+) \cdot (I+\hbar \Upsigma^-)^{-1}.
\end{gather*}
The element $q^\Theta=\exp(\frac{\hbar}{2}\Theta)$ is defined in Lemma \ref{L:Theta}, while the matrices  $\Upsigma^+$ and $\Upsigma^-$ lay in  $\mfgl(V)^\mfg \otimes_\C \msZ(\UR{\mfg})$ and  appear earlier in Theorem \ref{T:URg->DUhb} -- their coefficients are computed explicitly in Corollary \ref{C:Z-compute} in the case where $V$ has pairwise non-isomorphic irreducible summands; see also Remarks \ref{R:weight-basis} and 
\ref{R:zero-weight}.
It is shown in Theorem \ref{T:URg->Uhg} that $\Uhg$ is isomorphic to the quotient of $\UR{\mfg}$ by the (Hopf) ideal generated by the coefficients of $\Theta$, $\Upsigma^+$ and $\Upsigma^-$.

The second solution, also given in Theorem \ref{T:URg->Uhg}, identifies $\Uhg$ with the subalgebra  of $\UR{\mfg}$ consisting of all elements fixed by a natural family  of automorphisms indexed by $\mfh\times \mathrm{GL}_I(V)^\mfg\times \mathrm{GL}_I(V)^\mfg$, where $\mfh$ is the Cartan subalgebra of $\mfg$ and $\mathrm{GL}_I(V)^\mfg$ is the group of $\mfg$-invariant elements in $I+\hbar \mfgl(V)[\![\hbar]\!]$. Namely, for each triple $(h,C^+,C^-)$ belonging to this direct product, there is an automorphism of $\UR{\mfg}$ uniquely determined by 
\begin{equation*}
\mrL^\pm \mapsto q^{-\pi(h)/2} \mrL^\pm q^{-\pi(h)/2} \cdot C^\pm,
\end{equation*}
where $q=e^{\hbar/2}$ and $\pi:U(\mfg)\to \End(V)$ defines the action of $\mfg$ on $V$. The Drinfeld--Jimbo algebra $\Uhg$ is then isomorphic to the subalgebra of $\UR{\mfg}$ consisting of all elements stable under each of these automorphisms.

Both of the characterizations of $\Uhg$ provided by Theorem \ref{T:URg->Uhg} are deduced, in part, using similar characterizations for the quantum double $D(\Uh{\mfb})$ obtained in Theorem \ref{T:URg->DUhb} which follow more directly from Theorem \ref{T:intro}.

\subsection{Quasitriangularity}\label{ssec:I-results-c} 

As another consequence of Theorem \ref{T:intro}, we prove in Corollary \ref{C:URb} that there is an isomorphism of topological Hopf algebras 
\begin{equation*}
\UR{\mfb}\cong \Uh{\mfb}\otimes \msS_\hbar(\mfz_V^+),
\end{equation*}
where $\UR{\mfb}$ is the Hopf subalgebra of  $\UR{\mfg}$ topologically generated by the coefficients of $\mrT^+$. 
 In all of the examples which have appeared in the literature, $\UR{\mfg}$ is itself quasitriangular and can be recovered as the quantum double of $\UR{\mfb}$. Given these examples, and the nature of the definition of $\UR{\mfg}$, it is reasonable to postulate that $\UR{\mfg}$ is quasitriangular for any $\mrR$ and $\mcV$ and, in addition, can be recovered as the quantum double of $\UR{\mfb}$. However, Theorem \ref{T:intro} implies that both these assertions are false in general. 

A precise characterization of the quasitriangularity of $\UR{\mfg}$ is given in Theorem \ref{T:Quasi}: $\UR{\mfg}$ is quasitriangular if and only if the irreducible summands of $V$ are pairwise non-isomorphic or, equivalently, the space of invariants $\mfgl(V)^\mfg$ is abelian. In this case, $\UR{\mfg}$ is indeed isomorphic to the quantum double of $\UR{\mfb}$. 
\subsection{Remarks} A number of the results outlined above draw inspiration from historical developments in the literature on quantum groups of \textit{affine} type and, in particular, the structure theory of (extended) \textit{Yangians}; see \cites{AMR,DR,MNO,Ol,WRTT} in addition to the monograph \cite{MoBook}. Indeed, it was already known to Drinfeld in the 1980's that  the Yangian $Y_\hbar(\mfg)$ of an arbitrary simple Lie algebra $\mfg$ could be rebuilt from any of its finite-dimensional irreducible representations $\mcV$ by following a procedure not so different from that outlined in Sections \ref{ssec:I-results-a} and \ref{ssec:I-results-b}; see \cite{DR}*{Thm.~6}. Namely, $Y_\hbar(\mfg)$ can be realized as the quotient of the so-called \textit{extended Yangian} associated to $\mcV$ (the affine, degenerate analogue of $\UR{\mfg}$) by the ideal generated by the coefficients of a distinguished central series. This description was generalized to arbitrary finite-dimensional $\mcV$ in the third authors paper \cite{WRTT}, where the Yangian analogues of Theorems \ref{T:URg-main} and \ref{T:URg->Uhg} were also obtained in general: see Theorems 7.3 and 7.11 therein. This further extended earlier results obtained in the case where $\mcV$ is the vector representation of the Yangian of a classical Lie algebra; see \cites{AMR,MNO} and \cite{MoBook}*{\S1}. Our proof of Theorem \ref{T:intro} (\textit{i.e.,} Theorem \ref{T:URg-main}) is partly based on the techniques used to prove the main results of \cite{WRTT}.

For the closely related quantum affine algebras $U_\hbar(\hspace{.5pt}\wh{\mfg}\hspace{.5pt})$, some of the results summarized in the previous paragraph have been explored in general \cite{RS90}, though almost all attention has focused on the special cases alluded to above; see \cites{DF93,FiTs19, GoMo10, JLM20, JLM20-b, JLM20-c,MRS}, for example, in addition to the forthcoming paper \cite{GRWqLoop}. In fact, the main results of the present paper are  a prerequisite to developing the $R$-matrix formalism for quantum loop/affine algebras in general. 

Finally, it is worth emphasizing that the current article does not consider non-trivial specializations of the quantized enveloping algebras $\UR{\mfg}$ or $\Uhg$. Rather, our results are written so as to be compatible with the general theory of Lie bialgebra quantizations over $\C[\![\hbar]\!]$. This is an important point that does not play a significant role in the Yangian picture summarized above. That being said, it would be interesting to further develop the results of this paper so as to incorporate rational and integral forms of $\UR{\mfg}$ and $\Uh{\mfg}$. 

\subsection{Outline}
The article is, roughly speaking, divided into two parts. The first part, which consists of Sections \ref{sec:g} and \ref{sec:g-r}, is focused on establishing the classical variant of Theorem \ref{T:intro} (see Theorem \ref{T:g-main}). We begin in Section \ref{sec:g} by recalling some basic facts about the standard Lie bialgebra structure on $\mfg$ and its relation to the Drinfeld double of the Borel subalgebra $\mfb\subset \mfg$. The classical counterpart $\mfg_\scriptr$ of $\UR{\mfg}$ is then defined in Definition \ref{Def:gr} and it is proven in Theorem \ref{T:g-main} that it admits the Lie bialgebra decomposition $\mfg_\scriptr\cong D(\mfb)\oplus \mfz_V^+\oplus \mfz_V^-$.

The second part of the article is devoted to establishing the results outlined in detail in Sections \ref{ssec:I-results-a}--\ref{ssec:I-results-c} above. We begin in Section \ref{sec:Uhg} by surveying some of the key properties of $\Uhg$ and the quantum double $D(\Uh{\mfb})$ which will play a role throughout the remainder of the paper. This includes a brief overview of the general definitions from the theory of quantized enveloping algebras in Section \ref{ssec:h-adic}. In Section \ref{sec:URg}, we introduce the $R$-matrix algebra $\UR{\mfg}$, establish some of its basic properties (see Lemma \ref{L:URg-hom} and Proposition \ref{P:aut}) and then prove Theorem \ref{T:intro} and the main results outlined in Section \ref{ssec:I-results-b}; see Theorems \ref{T:URg-main}, \ref{T:URg->DUhb} and \ref{T:URg->Uhg}. The section concludes with a study of the quantum Borel subalgebras $\UR{\mfb}=\UR{\mfb^+}$ and $\UR{\mfb^-}$ of $\UR{\mfg}$ in Section \ref{ssec:URb}; see Corollaries \ref{C:URb} and \ref{C:URb->Ub}. In Section \ref{sec:Quasi}, we obtain the characterization of the quasitriangularity of $\UR{\mfg}$ sketched in Section \ref{ssec:I-results-c} and compute the central elements alluded to in Section \ref{ssec:I-results-b} explicitly in the case where $\mcV$ has no repeated composition factors; see Theorem \ref{T:Quasi}, Corollary \ref{C:Z-compute}, and Remarks \ref{R:weight-basis} and \ref{R:zero-weight}. We conclude our paper in Section \ref{sec:Ex} by outlining how our results specialize to well-known constructions in the case where $\mcV$ is the vector representation of $\Uh{\mfsl_n}$. 

\subsection{Acknowledgments}
We are extremely grateful to Pavel Etingof for suggesting that some of the ideas from \cite{WRTT} could be applied to further develop the $R$-matrix formalism for the standard quantizations of semisimple Lie algebras. These preliminary remarks have motivated a large part of this article. SG was supported through the Simons foundation collaboration grant 526947. MR was supported by the Pacific Institute for the Mathematical Sciences (PIMS) Postdoctoral Fellowship program.
CW gratefully acknowledges the support of the Natural Sciences and Engineering Research Council of Canada (NSERC), provided via the Discovery Grants Program (Grant RGPIN-2022-03298 and DGECR-2022-00440).

\section{Recollections on \texorpdfstring{$\mfg$}{g}} \label{sec:g}

\subsection{}\label{ssec:g}
Let $\mfg$ be a simple Lie algebra over the complex numbers and fix an invariant, non-degenerate, symmetric bilinear form $(\,,\,)$ on $\mfg$.   Let $\mfh\subset \mfg$ be a Cartan subalgebra with $\{\alpha_i\}_{i\in \mbI}\subset \mfh^\ast$ a basis of simple roots, and let  $\Root^+\subset \mfh^\ast$ be the associated set of positive roots of $\mfg$. We normalize $(\,,\,)$, if necessary, so that the square length of every short root is $2$. In addition, we let $\msQ_+=\bigoplus_{i\in\mbI}\Z_{\geq 0}\alpha_i$ denote the positive cone in the root lattice $\msQ=\Z \Root^+\subset \mfh^\ast$ of $\mfg$.   Let $(a_{ij})_{i,j\in \mbI}$ be the Cartan matrix of $\mfg$, so that $d_ia_{ij}=(\alpha_i,\alpha_j)$, where $\{d_i\}_{i\in \mbI}$ are the symmetrizing integers of $\mfg$, defined by $d_i=\frac{(\alpha_i,\alpha_i)}{2}$ for all $i\in \mbI$.

We shall write $\Omega_\mfh\in \msS^2(\mfh)\subset \mfh\otimes \mfh$ for the canonical element associated to $(\,,\,)|_{\mfh\otimes \mfh}$. If $\nu:\mfh\iso \mfh^\ast$ is the isomorphism determined by $(\,,\,)|_{\mfh\otimes \mfh}$, then we may define $\{h_i\}_{i\in \mbI}\subset \mfh$ by $h_i=\nu^{-1}(\alpha_i)$, and we have
\begin{gather*}
\Omega_\mfh=\sum_{i\in \mbI} \varpi_i^\vee \otimes h_i,
\end{gather*}
where $\{\varpi_i^\vee\}_{i\in \mbI}\subset \mfh$ are the fundamental coweights of $\mfg$, determined by $\alpha_j(\varpi_i^\vee)=\delta_{ij}$ for all $i,j\in \mbI$. 

\subsection{Lie bialgebra structure}\label{ssec:D(b)}

Recall that a Lie bialgebra structure on a Lie algebra $\mfa$ is given by the additional data of a linear map
$\delta:\mfa\to \mfa\wedge \mfa$ satisfying the cocycle and co-Jacobi identities 
\begin{gather*}
\delta([x,y])=[\delta(x),\Delta(y)]+[\Delta(x),\delta(y)] \quad \forall \; x,y\in \mfa, \\ 
(\id +(1\,2\,3)+(1\,3\,2))\cdot \delta\otimes \id \circ \delta=0, 
\end{gather*}
respectively, where $\Delta(x)=x\otimes 1 + 1\otimes x$ for all $x\in \mfa$. 

If $\mfa=(\mfa,[\,,\,]_\mfa, \delta_\mfa)$ is a finite-dimensional Lie bialgebra, then its dual $\mfa^\ast$ is as well, with bracket $[\,,\,]_{\mfa^\ast}$ and cobracket $\delta_{\mfa^\ast}$ given by the transposes $\delta_\mfa^t:\mfa^\ast \wedge \mfa^\ast \to \mfa^\ast$ and $[\,,\,]_{\mfa}^t:\mfa^\ast \to \mfa^\ast\wedge \mfa^\ast$, respectively.
A Lie bialgebra   $(\mfa,\delta)$ is said to be \textit{quasitriangular} if there is $r\in \mfa\otimes \mfa$ satisfying 
\begin{gather*}
\delta(x)=[x\otimes 1 + 1\otimes x,r]=[\Delta(x),r] \quad \forall \; x\in \mfa, \\
[r_{12},r_{13}]+[r_{12},r_{23}]+[r_{13},r_{23}]=0,\\
r+r_{21}\in (\mfa\otimes \mfa)^\mfa.
\end{gather*}
That is, $r$ is a coboundary structure of $(\mfa,\delta)$ satisfying the classical Yang--Baxter equation (it is an $r$-matrix),  whose symmetric part is $\mfa$-invariant. 

 The \textit{Drinfeld double} $D(\mfb)$ of a finite-dimensional Lie bialgebra $\mfb$ is a quasitriangular Lie bialgebra containing $(\mfb,\delta_\mfb)$ and $(\mfb^\ast, -\delta_{\mfb^\ast})$ as Lie sub-bialgebras, which can be realized on the space $\mfb\oplus \mfb^\ast$. It has cobracket $\delta$ given by $\delta=\delta_\mfb \oplus (-\delta_{\mfb^\ast})$, and Lie bracket determined by the cross relations
\begin{equation*}
[x,f]=\mathrm{ad}^\ast(x)(f)+f\otimes \id_\mfb \circ \delta(x) \quad \forall\; x\in \mfb\; \text{ and }\; f\in \mfb^\ast.
\end{equation*}
The associated $r$-matrix is given by the canonical element $r\in \mfb\otimes \mfb^\ast\subset D(\mfb)^{\otimes 2}$. Moreover, the natural pairing  $\mfb\otimes \mfb^\ast\to \C$ extends to a symmetric, non-degenerate, invariant bilinear form $\langle\,,\,\rangle$ on $D(\mfb)$ for which $\mfb$ and $\mfb^\ast$ are isotropic subspaces. Hence, the triple $(D(\mfb),\mfb,\mfb^\ast)$ forms a \textit{Manin triple}; that is, a triple $(\mfa,\mfp,\mfq)$ consisting of a Lie algebra $\mfa$ with a symmetric, non-degenerate invariant bilinear form, and two isotropic Lie subalgebras $\mfp$ and $\mfq$ for which $\mfa=\mfp\oplus \mfq$ as vector spaces. Every finite-dimensional Manin triple can be realized via the Drinfeld double construction, as above; see \cite{ES}*{\S4}, for instance.

The simple Lie algebra $\mfg$ admits a quasitriangular Lie bialgebra structure with $r$-matrix given by the element
\begin{equation}\label{half-Cas}
\scriptr=\sum_{\alpha\in \Root^+}x_\alpha^+\otimes x_\alpha^- + \frac{1}{2}\Omega_\mfh,
\end{equation}
where, for each $\alpha\in \Root^+$,  $x_\alpha^\pm\in \mfg_{\pm \alpha}$ are fixed root vectors satisfying $(x_\alpha^+,x_\alpha^-)=1$. In particular, $\scriptr+\scriptr_{21}$ is the Casimir tensor $\Omega\in (\mfg\otimes\mfg)^\mfg$.
This quasitriangular structure naturally arises by realizing $\mfg$ as a quotient of the Drinfeld double of its  positive Borel subalgebra. We now briefly review these facts, following \cite{ES}*{\S4.4}.

Consider the external direct sum of Lie algebras 
$\mfg\oplus \mfh$. Let 
\begin{equation*}
\zeta:\mfh \into \mfg\oplus \mfh
\end{equation*}
be the natural identification of $\mfh$ with the central copy of $\mfh$ in $\mfg\oplus \mfh$.
We equip $\mfg\oplus \mfh$ with the invariant, non-degenerate, symmetric bilinear form $\langle\,,\rangle$ defined by 
\begin{equation*}
\langle x_1+\zeta(h_1),x_2+\zeta(h_2)\rangle:=(x_1,x_2)-(h_1,h_2) \quad \forall\; x_i\in \mfg, \, h_i\in \mfh.
\end{equation*}
Next, we define $\mfn^\pm=\bigoplus_{\alpha\in\Root^+}\mfg_{\pm \alpha}\subset \mfg$ and set 
\begin{equation*}
\mfh^{\pm}=\mathrm{span}_\C\{h\pm \zeta(h)\,:\,h\in \mfh\}\subset \mfg\oplus \mfh.
\end{equation*}
Then the subspace $\mfh^\pm\oplus \mfn^\pm\subset \mfg\oplus\mfh$ is a Lie subalgebra canonically isomorphic to the Borel subalgebra $\mfb^\pm=\mfh\oplus \mfn^\pm$ of $\mfg$, and we have the vector space decomposition 
\begin{equation*}\label{decomp}
\mfg\oplus \mfh = (\mfh^+\oplus \mfn^+)\oplus (\mfh^-\oplus \mfn^-) \cong \mfb^+\oplus \mfb^-.
\end{equation*}
In what follows, we shall identify $\mfb^\pm$ with $\mfh^\pm \oplus \mfn^\pm$, viewed as a Lie subalgebra of $\mfg\oplus \mfh$. In addition, we set $h^\pm=h\pm \zeta(h)$ for each $h\in \mfh$. 
\begin{proposition}\label{P:b-dbl}
The triple $(\mfg\oplus\mfh, \mfb^+, \mfb^-)$ is a Manin triple. Consequently, $\mfb^+$ is a Lie bialgebra, and the canonical element 
\begin{equation*}
\scriptr^D=\sum_{\alpha\in\Root^+}x_\alpha^+\otimes x_\alpha^- + \frac{1}{2}\sum_{i\in \mbI} (\varpi_i^\vee)^+\otimes h_i^- \in \mfb^+\otimes \mfb^-
\end{equation*}
associated to  $\langle\,,\,\rangle$ defines a quasitriangular Lie bialgebra structure on $\mfg\oplus \mfh$ such that $D(\mfb^+)\cong \mfg\oplus \mfh$. 
\end{proposition}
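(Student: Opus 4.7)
The plan is to verify that $(\mfg\oplus\mfh,\mfb^+,\mfb^-)$ satisfies the three defining properties of a Manin triple and then invoke the general correspondence between finite-dimensional Manin triples and Drinfeld doubles recalled in Section \ref{ssec:D(b)}. The quasitriangular Lie bialgebra structure on $\mfg\oplus\mfh$ and the explicit formula for $\scriptr^D$ will then follow from computing the canonical tensor of the induced perfect pairing between $\mfb^+$ and $\mfb^-$ in an adapted basis.

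First I would check the basic properties of $\langle\,,\,\rangle$. Symmetry is immediate from the definition; invariance follows from the fact that $\zeta(\mfh)$ is central in $\mfg\oplus\mfh$, which reduces invariance of $\langle\,,\,\rangle$ to that of $(\,,\,)$ on $\mfg$; and non-degeneracy follows from the non-degeneracy of $(\,,\,)$ on $\mfg$ and of $(\,,\,)|_{\mfh}$ on $\mfh$. The vector-space decomposition $\mfg\oplus\mfh=\mfb^+\oplus\mfb^-$ is a consequence of the identity $\mfh^+\oplus\mfh^-=\mfh\oplus\zeta(\mfh)$ combined with the triangular decomposition $\mfg=\mfn^-\oplus\mfh\oplus\mfn^+$. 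I would then verify that $\mfb^\pm$ are isotropic Lie subalgebras: the subalgebra property follows from $[\mfh,\mfn^\pm]\subset\mfn^\pm$ and $[\mfn^\pm,\mfn^\pm]\subset\mfn^\pm$, while isotropy reduces to the standard vanishings $(\mfn^\pm,\mfn^\pm)=0$ and $(\mfh,\mfn^\pm)=0$ coming from the root-space grading of $\mfg$, together with the cancellation
\begin{equation*}
\langle h^\pm,(h')^\pm\rangle=(h,h')-(\pm h,\pm h')=0\quad\text{for all}\quad h,h'\in\mfh,
\end{equation*}
which exploits the minus sign in the definition of $\langle\,,\,\rangle$. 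This completes the verification that $(\mfg\oplus\mfh,\mfb^+,\mfb^-)$ is a Manin triple.

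The general correspondence between finite-dimensional Manin triples and Drinfeld doubles then automatically endows $\mfb^+$ with a Lie bialgebra structure whose cobracket is obtained by transposing the bracket of $\mfb^-\cong(\mfb^+)^\ast$ via $\langle\,,\,\rangle$, and furnishes an isomorphism $D(\mfb^+)\cong\mfg\oplus\mfh$ of quasitriangular Lie bialgebras whose $r$-matrix is the canonical tensor of the pairing. To obtain the stated formula for $\scriptr^D$, I would compute
\begin{equation*}
\langle x_\alpha^+,x_\beta^-\rangle=\delta_{\alpha\beta}\qquad\text{and}\qquad\langle(\varpi_i^\vee)^+,h_j^-\rangle=2\alpha_j(\varpi_i^\vee)=2\delta_{ij},
\end{equation*}
with all other cross-pairings between the natural spanning sets vanishing. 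It follows that $\{x_\alpha^+\}_{\alpha\in\Root^+}\cup\{(\varpi_i^\vee)^+\}_{i\in\mbI}$ is a basis of $\mfb^+$ whose $\langle\,,\,\rangle$-dual basis in $\mfb^-$ is $\{x_\alpha^-\}_{\alpha\in\Root^+}\cup\{\tfrac{1}{2}h_i^-\}_{i\in\mbI}$, and summing over this dual pair yields precisely the claimed formula. I do not anticipate any serious obstacle: this is essentially a bookkeeping exercise, and the only subtle point is the sign convention in $\langle\,,\,\rangle$, which is precisely what forces the diagonal-like Cartan subspaces $\mfh^\pm$ to be isotropic.
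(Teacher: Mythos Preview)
Your proposal is correct and complete. The paper does not actually prove this proposition: it is presented as part of a review of standard material, with the surrounding discussion explicitly following \cite{ES}*{\S4.4}, and no argument is given beyond the setup of the bilinear form and the subalgebras $\mfb^\pm$. Your direct verification of the Manin triple axioms and the dual-basis computation of $\scriptr^D$ is exactly the standard argument one would extract from that reference, and all the details you sketch (invariance via centrality of $\zeta(\mfh)$, isotropy from the sign in $\langle\,,\,\rangle$, the pairing $\langle(\varpi_i^\vee)^+,h_j^-\rangle=2\delta_{ij}$) check out.
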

Since the central copy of $\mfh$ in $D(\mfb^+)=\mfg\oplus \mfh$ is killed by the cobracket $\delta$, it defines a Lie bialgebra ideal in $D(\mfb^+)$, and hence the canonical Lie algebra homomorphism 
\begin{equation*}
\uppsi: D(\mfb^+)=\mfg\oplus \mfh \onto \mfg
\end{equation*} 
defines a quasitriangular Lie bialgebra structure on $\mfg$ with $r$-matrix $\scriptr=(\uppsi\otimes \uppsi)(\scriptr^D)$ given by \eqref{half-Cas}. The resulting Lie bialgebra structure is often called the \textit{standard structure} on $\mfg$. 

\subsection{Chevalley involution}\label{ssec:Db-Chev} 
Let $\omega\in \mathrm{Aut}(\mfg)$ denote the Chevalley involution of $\mfg$, uniquely determined by 
\begin{equation*}
\omega|_{\mfh}=-\id_\mfh \quad \text{ and }\quad \omega(x_i^\pm)=-x_i^\mp \quad \forall\; i\in \mbI,
\end{equation*}
 where $x_i^\pm:=x_{\alpha_i}^\pm\in \mfg_{\pm \alpha_i}$, with $x_\alpha^\pm$ as in \eqref{half-Cas}. Then $\dot\omega:=\omega\oplus \id_\mfh$ is a Lie algebra involution of $\mfg\oplus \mfh \cong D(\mfb^+)$ satisfying $\dot\omega(x_i^\pm)=-x_i^\mp$ and $\dot\omega(h^\pm)=-h^\mp$ for all $i\in \mbI$ and $h\in \mfh$. In particular, one has $(\dot\omega \otimes \dot\omega) (\scriptr^D)=\scriptr^D_{21}$. It follows that $\dot\omega$ is a Lie coalgebra anti-involution of $D(\mfb^+)$:
\begin{equation*}
(\dot\omega \otimes \dot\omega)\circ \delta = -\delta \circ \dot \omega. 
\end{equation*}
 We shall henceforth simply write $\omega$ for $\dot\omega$; as the latter restricts to the Chevalley involution of $\mfg$, this will not cause any ambiguity.
 
\section{The classical construction}\label{sec:g-r}

\subsection{Notation}\label{ssec:Notation}

Throughout Section \ref{sec:g-r}, we take $V$ to be a fixed finite-dimensional $\mfg$-module with associated  algebra homomorphism 
\begin{equation*}
\pi:U(\mfg)\to \End(V).
\end{equation*}
We further assume that $V$ has a non-trivial irreducible summand or, equivalently, that $V$ is a faithful representation of $\mfg$. 

In addition, we shall make use of the following notation, frequently employed in the literature (see, for example, \cite{MoBook}*{\S1.5}).
Let $\mathscr{U}$ be a unital, associative algebra over the complex numbers, and suppose that $n$ and $a$ are positive integer with $a\leq n$.  Let $\imath_{\mathscr{U},n}^{(a)}:\mathscr{U}\to \mathscr{U}^{\otimes n}$ denote the algebra homomorphism
\begin{equation*}
\imath_{\mathscr{U},n}^{(a)}(x)=1_\mathscr{U}^{\otimes (a-1)}\otimes x \otimes 1_\mathscr{U}^{\otimes (n-a)} \quad \forall \quad x\in \mathscr{U}.
\end{equation*}
Given an element $F\in \End(V)\otimes \mathscr{U}$, we then set 
\begin{equation*}
F_{i[j]}=(\imath^{(i)}_{\mathscr{A},n}\otimes \imath^{(j)}_{\mathscr{U},m})(F)\in \End(V)^{\otimes n} \otimes \mathscr{U}^{\otimes m},
\end{equation*}
where we have abbreviated $\mathscr{A}=\End(V)$, $m$ is any auxiliary positive integer, and $i$ and $j$ satisfy  $1\leq i\leq n$ and $1\leq j\leq m$. In the case where $n=1$, we will write $F_{[j]}$ for $F_{1[j]}$, and in the case where $m=1$ we shall write $F_i$ for $F_{i[1]}$. The positive integers $n$ and $m$ will always be clear from context. We shall sometimes apply the above notation when $F\in \End(V)\otimes \mfa$ for some complex Lie algebra $\mfa$. In this case, it is implicitly understood that $\mathscr{U}=U(\mfa)$, the universal enveloping algebra of $\mfa$.
\subsection{The $\mfg$-module $\mfgl(V)$}\label{ssec:gl(V)}

We view the Lie algebra $\mfgl(V)=\End(V)$ as a $\mfg$-module with action given by restricting the adjoint action of $\mfgl(V)$ on itself to $\mfg\cong \pi(\mfg)$. This coincides with the standard action of $\mfg$ on $\End (V)$ under the identification $\End(V)\cong V^\ast\otimes V$. The $\mfg$-module $\mfgl(V)$ then admits a weight space decomposition 
\begin{equation*}
\mfgl(V)=\bigoplus_{\mu\in \mfh^\ast}\mfgl(V)_\mu,
\end{equation*}
where the weight space $\mfgl(V)_\mu$ is given explicitly by
\begin{align*}
\mfgl(V)_\mu&=\{X\in \mfgl(V): [\pi(h),X]=\mu(h)X\quad\forall\; h\in \mfh\}=\bigoplus_{\lambda\in \mfh^\ast}\Hom(V_\lambda,V_{\lambda+\mu}).
\end{align*}
Given an arbitrary $\C$-vector space $W$ and an element $X\in \End (V) \otimes W$, we define 
\begin{equation*}
X_\mu:=(\mathbf{1}_\mu\otimes \mathrm{id})X \in \mfgl(V)_\mu \otimes W \quad \forall \; \mu\in \mfh^\ast, 
\end{equation*}
 where $\mathbf{1}_\mu:\mfgl(V)\to \mfgl(V)_\mu$ is the natural projection onto the $\mu$-weight space of the $\mfg$-module $\mfgl(V)$.

\subsection{The Lie algebra $\mathfrak{g}_\scriptr$}\label{ssec:g-matrix-def}

We now introduce the classical structure at the heart of the current section: the $r$-matrix Lie algebra $\mfg_\scriptr$. Let $\{v_i\}_{i\in \mcI}\subset V$ be any fixed basis of $V$, and let $\{E_{ij}\}_{i,j\in \mcI}\subset \End(V)$ denote the associated elementary matrices, defined by $E_{ij}v_k=\delta_{jk}v_i$ for all $i,j,k\in \mcI$. 
\begin{definition}\label{Def:gr}
Let $\mfg_\scriptr$ be the Lie algebra generated by $\{l_{ij}^\pm\}_{i,j\in \mcI}$, subject to 
\begin{gather}
L^\pm_{\lambda}=0 \quad \forall\; \lambda\in \dot{\mathsf{Q}}_{\mp}, \label{gr:triangle}\\
[L_1^\pm,L_2^\pm]=-[\scriptr_\pi,L_1^\pm+L_2^\pm], \label{gr:LpmLpm}\\
[L_1^+,L_2^-]=-[\scriptr_\pi,L_1^++L_2^-]\label{gr:L+L-}, 
\end{gather}
where $\scriptr_\pi=(\pi\otimes \pi)\scriptr$ and $L^\pm$ is the generating matrix 
\begin{equation*}
L^\pm=\sum_{i,j\in \mcI} E_{ij}\otimes l_{ij}^\pm \in \End(V)\otimes \mfg_\scriptr.
\end{equation*}
\end{definition}
Here we have set  $\dot{\msQ}_{\pm}=\msQ_\pm \setminus\{0\}$, where $\msQ_-=-\msQ_+$. The relation \eqref{gr:triangle}, which we call the \textit{triangularity relation} for $L^\pm$,  holds in $\End(V)\otimes \mfg_\scriptr$, while \eqref{gr:LpmLpm} and \eqref{gr:L+L-} are relations in the space $\End(V)^{\otimes 2}\otimes \mfg_\scriptr$ and we have written $\scriptr_\pi$ for $(\scriptr_\pi)_{12}$. 
\begin{remark}\label{R:g_r-depends}
The definition of $\mfg_\scriptr$ of course depends on the underlying representation $V$ and, more precisely, the evaluation $\scriptr_\pi\in \End(V)^{\otimes 2}$ of $\scriptr$ on $V\otimes V$. However, since $\pi$ is fixed throughout this section, the notation $\mfg_\scriptr$ will not cause any ambiguity. 
\end{remark}
\begin{remark}
We will see below that Equation \eqref{gr:triangle} indeed implies that
$L^+$ and $L^-$ are upper and lower triangular, respectively, in the sense that 
\begin{equation*}
L^\pm \in (\mfgl(V)_0\otimes \mfg_{\scriptr})\oplus \bigoplus_{\alpha\in \Root^+}(\mfgl(V)_{\pm \alpha}\otimes \mfg_{\scriptr}).
\end{equation*}
In fact, we will establish a stronger assertion in Lemma \ref{L:Z-wtgr}.
\end{remark}
%
%
\subsection{Central elements and triangularity}\label{ssec:g-central}

Following \cite{WRTT}, we consider the $\mfg$-module decomposition 
\begin{equation*}
\mfgl(V)=\pi(\mfg)\oplus W=\mathrm{ad}(\mfg)\oplus (\mfgl(V)^\mfg\oplus M),
\end{equation*}
where $W=\mfgl(V)^\mfg\oplus M$ with $M$ a $\mfg$-submodule of $\mfgl(V)$ complimentary to $\pi(\mfg)\oplus \mfgl(V)^\mfg$, and $\mathrm{ad}(\mfg)$ is the adjoint representation of $\mfg$ realized on the space $\pi(\mfg)\cong \mfg$. Note that the submodule of $\mfg$-invariants $\mfgl(V)^\mfg$ is precisely the intertwiner space $\End_\mfg(V)$. 
The generating matrices $L^\pm$ therefore decompose uniquely as  
\begin{equation*}
L^\pm=\msL^\pm + K^\pm \quad \text{ with }\quad \msL^\pm\in\pi(\mfg)\otimes \mfg_\scriptr,\quad K^\pm\in W\otimes \mfg_\scriptr. 
\end{equation*}

Let $\Casimir\in U(\mfg)$ denote the central Casimir element. We shall write $\Casimir(L^\pm)$ for $(\Casimir\otimes \mathrm{id})(L^\pm)$, where the action of $\mfg$ (and thus $U(\mfg)$) on $\mfgl(V)$ is as in Section \ref{ssec:gl(V)}. Let $\kappa$ denote the eigenvalue of $\Casimir$ on the adjoint representation of $\mfg$, so that 
\begin{equation*}
\Casimir(x)=\kappa\cdot x \quad \forall \; x\in \pi(\mfg).
\end{equation*}
The following lemma provides an analogue of \cite{WRTT}*{Lem.~4.2}. 
\begin{lemma}\label{L:Z-wtgr}
\leavevmode 
\begin{enumerate}[font=\upshape]
\item\label{wtgr:Z1} The coefficients of $K^\pm$ belong to the center of $\mfg_\scriptr$. 
\item\label{wtgr:Z2} We have $\Casimir(K^\pm)=0$. In particular, 
$K^\pm \in \mfgl(V)^\mfg\otimes \mfg_\scriptr$ and 
\begin{equation*}
L^\pm \in (\pi(\mfb^\pm)\oplus\mfgl(V)^\mfg)\otimes \mfg_\scriptr. 
\end{equation*}
\item\label{wtgr:Z3} We have $L^\pm-\frac{1}{\kappa}\Casimir(L^\pm)=K^\pm$. 
\end{enumerate}
\end{lemma}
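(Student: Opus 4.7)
The plan is to reduce parts (1)--(3) to the single key identity $\Casimir(K^\pm) = 0$ appearing in (2); the remaining parts then fall out routinely. The first step is to derive the symmetrized form
\begin{equation*}
[\Omega_\pi,\, L_1^\pm + L_2^\pm] = 0 \quad \text{in }\End(V)^{\otimes 2}\otimes \mfg_\scriptr
\end{equation*}
of \eqref{gr:LpmLpm}, obtained by adding \eqref{gr:LpmLpm} to its image under the swap of auxiliary slots and using $\scriptr_\pi + (\scriptr_\pi)_{21} = \Omega_\pi$. Applying the projection $\mathbf{1}_W$ to slot $1$ and to slot $2$ separately then produces
\begin{equation*}
[\Omega_\pi,\, K_j^\pm] = 0, \quad j \in \{1,2\};
\end{equation*}
this uses that $\mathbf{1}_W$ is $\mfg$-equivariant, since $W$ is a $\mfg$-submodule of $\mfgl(V)$ complementary to $\pi(\mfg)$, and annihilates the entries of $\Omega_\pi$, which all lie in $\pi(\mfg)$.

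Next I would apply the multiplication map $\mu\colon \End(V)^{\otimes 2}\to \End(V)$ to each of these identities. The standard formula
\begin{equation*}
\sum_{i} \pi(y_i)\, X\, \pi(z_i) = \tfrac{1}{2}\bigl(\pi(\Casimir)\, X + X\, \pi(\Casimir) - \Casimir(X)\bigr), \quad X \in \mfgl(V),
\end{equation*}
valid for any dual bases $\{y_i\},\{z_i\}\subset \mfg$ of the invariant form and following from expanding $\Casimir(X) = \sum_i [\pi(y_i),[\pi(z_i),X]]$ together with the symmetry of $\Omega$, then yields after a short computation
\begin{equation*}
\mu\bigl([\Omega_\pi, K_j^\pm]\bigr) = \tfrac{1}{2}[\pi(\Casimir), K^\pm] + \tfrac{(-1)^j}{2}\Casimir(K^\pm).
\end{equation*}
Since the left-hand side vanishes, subtracting the $j=1$ and $j=2$ equations gives $\Casimir(K^\pm) = 0$. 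As $\mfg$ is simple, the Casimir acts by the nonzero scalar $(\lambda, \lambda + 2\rho)$ on every nontrivial simple $\mfg$-submodule of $\mfgl(V)$, so $\ker(\Casimir|_{\mfgl(V)}) = \mfgl(V)^\mfg$; this forces $K^\pm \in \mfgl(V)^\mfg \otimes \mfg_\scriptr$, and combined with \eqref{gr:triangle} gives $\msL^\pm \in \pi(\mfb^\pm)\otimes \mfg_\scriptr$, completing (2).

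For (1), applying $\mathbf{1}_W$ to slot $1$ and slot $2$ of both \eqref{gr:LpmLpm} and \eqref{gr:L+L-} (after an auxiliary swap of slots to handle all four sign combinations) expresses each commutator between a coefficient of $K^{\pm}$ and a generator of $\mfg_\scriptr$ as a commutator of the form $\pm[\scriptr_\pi, K_j^{\pm}]$ or $\pm[(\scriptr_\pi)_{21}, K_j^{\pm}]$. By (2) these all vanish, because the entries of $\scriptr_\pi$ and $(\scriptr_\pi)_{21}$ lie in $\pi(\mfg)$ and act trivially on $\mfgl(V)^\mfg$; hence every coefficient of $K^\pm$ is central. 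Finally, (3) is immediate from (2): $\Casimir(L^\pm) = \Casimir(\msL^\pm) + \Casimir(K^\pm) = \kappa\,\msL^\pm$, since $\Casimir$ acts as $\kappa$ on $\pi(\mfg)$ and as $0$ on $\mfgl(V)^\mfg$. The main delicate step is the $\mu$-calculation in the second paragraph; once the formula for $\sum_i \pi(y_i) X \pi(z_i)$ is in hand, the remaining implications are formal.
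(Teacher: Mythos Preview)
Your proof is correct and follows essentially the same route as the paper's: both arguments derive $[\Omega_\pi,K_j^\pm]=0$ from the defining relation \eqref{gr:LpmLpm} (you symmetrize first and then project onto $W$, while the paper projects onto the four summands of $\End(V)^{\otimes 2}$ first and then compares the swapped identities---these are equivalent manipulations), and then both deduce $\Casimir(K^\pm)=0$ from this. The only substantive difference is that the paper outsources the passage from $[\Omega_\pi,K_j^\pm]=0$ to $\Casimir(K^\pm)=0$ to \cite{WRTT}*{Lem.~4.2}, whereas you supply it directly via the multiplication map $\mu$ and the standard identity for $\sum_i \pi(y_i)X\pi(z_i)$; your explicit computation is exactly what lies behind that citation.
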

\begin{proof} 
We will prove the lemma by projecting the defining relations \eqref{gr:LpmLpm} and \eqref{gr:L+L-} onto the summands of $\End(V)^{\otimes 2}\otimes \mfg_\scriptr$ with respect to the decomposition 
\begin{equation*}
\End(V)^{\otimes 2}\otimes \mfg_\scriptr=(\pi(\mfg)^{\otimes 2}\otimes \mfg_\scriptr) \oplus (\pi(\mfg)\otimes W \otimes \mfg_\scriptr) \oplus (W\otimes \pi(\mfg) \otimes \mfg_\scriptr) \oplus (W^{\otimes 2}\otimes \mfg_\scriptr). 
\end{equation*}
Substituting $L^\pm=\msL^\pm+K^\pm$ into the relation \eqref{gr:LpmLpm}, and projecting onto each of the four above summands yields the four relations 
\begin{equation}
\begin{gathered}\label{components}
[\msL_1^\pm,\msL_2^\pm]=-[\scriptr_\pi,\msL_1^\pm+\msL_2^\pm],\\
[K_1^\pm,K_2^\pm]=0, \; [K_1^\pm,\msL_2^\pm]=-[\scriptr_\pi,K_1^\pm], \; [\msL_1^\pm,K_2^\pm]=-[\scriptr_\pi,K_2^\pm].
\end{gathered}
\end{equation}
Applying the permutation operator $(1\,2)$ to $[K_1^\pm,\msL_2^\pm]=-[\scriptr_\pi,K_1^\pm]$ yields 
\[
[\msL_1^\pm,K_2^\pm]=[\scriptr_\pi^{21},K_2^\pm],
\]
and thus we must have $[\scriptr_\pi^{21},K_2^\pm]=-[\scriptr_\pi,K_2^\pm]$. Equivalently, 
\begin{equation*}
[\Omega_\pi,K_2^\pm]=0=[\Omega_\pi,K_1^\pm],
\end{equation*}
where we recall that $\Omega=\scriptr+\scriptr_{21}\in \mfg\otimes \mfg$ is the Casimir tensor and $\Omega_\pi=(\pi\otimes \pi)\Omega$. 
One concludes from the above relation exactly as in the proof of \cite{WRTT}*{Lem.~4.2} that $\Casimir(K^\pm)=0$, and consequently that $K^\pm \in \mfgl(V)^\mfg\otimes \mfg_\scriptr$ and
\begin{equation*}
L^\pm=\msL^\pm+K^\pm \in (\pi(\mfg)\oplus\mfgl(V)^\mfg)\otimes \mfg_\scriptr. 
\end{equation*} 
The stronger assertion that $L^\pm\in (\pi(\mfb^\pm)\oplus\mfgl(V)^\mfg)\otimes \mfg_\scriptr$ is an application of the triangularity relation \eqref{gr:triangle}, the decomposition $\pi(\mfg)=\pi(\mfn^\mp)\oplus \pi(\mfb^\pm)$, and that $\mfgl(V)^\mfg$ is contained in the zero weight space $\mfgl(V)_0$ of $\mfgl(V)$. 
This proves Part \eqref{wtgr:Z2} of the Lemma, from which Part \eqref{wtgr:Z3} follows immediately.

We are thus left to establish Part \eqref{wtgr:Z1} of the lemma, which asserts that the coefficients of $K^\pm$ belong to the center of $\mfg_\scriptr$. We first observe that, since $K^\pm\in \mfgl(V)^\mfg\otimes\mfg_\scriptr$, we have  $[\scriptr_\pi,K^\pm_1]=0=[\scriptr_\pi,K^\pm_2]$. Hence, \eqref{components} implies that 
\begin{equation*}
[K_1^\pm,\msL_2^\pm]=0=[\msL_1^\pm,K_2^\pm]. 
\end{equation*} 
We are thus left to show that  $[L_1^\pm,K_2^\mp]=0$.
Using again that $[\scriptr_\pi,K^\pm_i]=0$, we deduce that the relation
\eqref{gr:L+L-} is equivalent to 
\begin{equation*}
[\msL_1^+,\msL_2^-]+[\msL_1^+,K_2^-]+[K_1^+,\msL_2^-]+[K_1^+,K_2^-]=-[\scriptr_\pi,\msL_1^++\msL_2^-]. 
\end{equation*}
Projecting onto the four summands of $\End (V)^{\otimes 2}\otimes \mfg_\scriptr$ therefore yields the relations 
\begin{gather*}
[\msL_1^+,\msL_2^-]=-[\scriptr_\pi,\msL_1^++\msL_2^-],\\
[K_1^+,K_2^-]=0,\; [\msL_1^+,K_2^-]=0, \; [K_1^+,\msL_2^-]=0. 
\end{gather*}
The second line above yields the desired result. \qedhere
\end{proof}
\begin{remark}\label{R:gL-sub}
In the process of proving the above lemma we have shown that $\msL^\pm$ satisfy the identities
\begin{gather*}
\msL^\pm_{\lambda}=0 \quad \forall\; \lambda\in \dot{\mathsf{Q}}_{\mp},\\
[\msL_1^\pm,\msL_2^\pm]=-[\scriptr_\pi,\msL_1^\pm+\msL_2^\pm],\quad [\msL_1^+,\msL_2^-]=-[\scriptr_\pi,\msL_1^++\msL_2^-],\\
\kappa\msL^\pm=\Casimir(\msL^\pm).
\end{gather*}
It will be established in Corollary \ref{C:gr->D(b)} that these are in fact defining relations for the Lie algebra $D(\mfb^+)$.
\end{remark}
%
%

\subsection{The Lie bialgebra dual to $\mfgl(V)^\mfg$}\label{ssec:g-inv-dual}

Consider the Lie subalgebra $\mfgl(V)^\mfg\subset \mfgl(V)$. We may view it as a Lie bialgebra, equipped with trivial Lie cobracket $\delta\equiv 0$. Let $\mfz_V^+=(\mfgl(V)^\mfg)^\ast$ denote its Lie bialgebra dual; it is a commutative Lie algebra, but when $V$ is not a direct sum of distinct irreducible representations it has a non-trivial cobracket $\delta$. Indeed, if  $\mfZ^+\in \mfgl(V)^\mfg\otimes \mfz_V^+$ is the canonical element, then 
\begin{equation*}
\delta(\mfZ^+)=[\mfZ_{[1]}^+,\mfZ_{[2]}^+]\in \mfgl(V)^\mfg\otimes (\mfz_V^+\wedge \mfz_V^+)
\end{equation*}
where $\delta$ is applied to the second tensor factor of $\mfZ^+$. We let $\mfz_V^-=(\mfz_V^+,-\delta)$ denote the opposite Lie bialgebra to $\mfz_V^+$. Let $\mfZ^-\in \mfgl(V)^\mfg\otimes \mfz_V^-$ be the canonical element, which is nothing but $\mfZ^+$, viewed as an element of $\mfgl(V)^\mfg\otimes \mfz_V^-$. Note that $\mfz_V^+$ and $\mfz_V^-$ are isomorphic as Lie bialgebras, with an isomorphism $\mfz_V^+\iso \mfz_V^-$ given by $\mfZ^+\mapsto -\mfZ^-$.  
%

\subsection{From $\mathfrak{g}_\scriptr$ to the double of $\mfb^+$}\label{ssec:g-main}
Let us now turn to relating $\mfg_\scriptr$ to the Lie bialgebra double $D(\mfb^+)$ of the Borel subalgebra $\mfb^+$ (see Section \ref{ssec:D(b)}). We extend $\pi$ to a representation of $D(\mfb^+)\cong \mfg\oplus \mfh$ by letting the central copy of $\mfh$ act trivially (equivalently, we pull back $\pi$ via the projection $\uppsi:\mfg\oplus \mfh\onto \mfg$). Recall from Proposition \ref{P:b-dbl} that 
\begin{equation*}
\scriptr^D=\sum_{\alpha\in \Root^+}x_\alpha^+\otimes x_\alpha^- + \frac{1}{2}\sum_{i\in \mbI}(\varpi_i^\vee)^+\otimes h_i^- 
\end{equation*}
defines a quasitriangular structure on $D(\mfb^+)$. In particular, it is an $r$-matrix:
\begin{equation}\label{CYBE-rD}
[\scriptr^D_{12},\scriptr^D_{13}]+[\scriptr^D_{12},\scriptr^D_{23}]+[\scriptr^D_{13},\scriptr^D_{23}]=0.
\end{equation}
In addition, by definition of $\pi$,  $\scriptr^D$ satisfies $(\pi\otimes\pi)\scriptr^D=\scriptr_\pi=(\pi\otimes\pi)\scriptr$.
We now introduce $\mbbL^\pm \in \End(V)\otimes D(\mfb^+)$ by 
\begin{equation*}
\mathbb{L}^+=(\pi\otimes \omega)\scriptr^{D} \quad \text{ and }\quad  \mbbL^-=-(\pi\otimes \omega)\scriptr^{D}_{21},
\end{equation*}
where we recall that $\omega$ is the Chevalley involution introduced in Section \ref{ssec:Db-Chev}. We then have the following result, which provides the first main theorem of this article. 
\begin{theorem}\label{T:g-main}
The assignment $L^\pm \mapsto \mathbb{L}^\pm \pm {\mathfrak{Z}}^\pm$ 
uniquely extends to an isomorphism of Lie algebras 
\begin{equation*}
\Phi_\scriptr:\mfg_\scriptr\iso D(\mfb^+)\oplus \mfz_V^+\oplus \mfz_V^-.
\end{equation*}
Moreover, the unique Lie cobracket $\delta_\scriptr:\mfg_\scriptr\to \mfg_\scriptr\wedge \mfg_\scriptr$ satisfying $\Phi_{\scriptr}^{\otimes 2} \circ \delta_\scriptr=\delta\circ \Phi_\scriptr$ is given by
$\delta_\scriptr(L^\pm)=[L_{[1]}^\pm, L_{[2]}^\pm]$.
\end{theorem}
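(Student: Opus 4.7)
The plan is to verify that $\Phi_\scriptr$ is a well-defined Lie algebra homomorphism, establish bijectivity, and then check the cobracket formula.

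First, I would verify that $\mbbL^\pm\pm\mfZ^\pm$ satisfy the defining relations \eqref{gr:triangle}--\eqref{gr:L+L-} of $\mfg_\scriptr$. Triangularity follows from $\mbbL^\pm \in \pi(\mfb^\pm) \otimes D(\mfb^+)$ -- a consequence of $\scriptr^D \in \mfb^+ \otimes \mfb^-$ together with $\omega(\mfb^\mp)=\mfb^\pm$ inside $D(\mfb^+)$ -- and the fact that $\mfZ^\pm$ has weight zero. The three $LL$-relations for $\mbbL^\pm$ follow by applying $\pi\otimes\pi\otimes\omega$ to three permutations of the CYBE \eqref{CYBE-rD}: the identity permutation yields \eqref{gr:LpmLpm} with upper signs; the transposition $(2\,3)$ produces $\scriptr^D_{32}$ (and hence $-\mbbL^-_2$) in the last slot, giving the mixed relation \eqref{gr:L+L-}; and the transposition $(1\,3)$, after using $\pi(\mfg)$-invariance of $\Omega_\pi=\scriptr_\pi+\scriptr_\pi^{21}$ to trade $\scriptr_\pi^{21}$ for $-\scriptr_\pi$ modulo a term annihilating $\mbbL^-_1+\mbbL^-_2$, yields \eqref{gr:LpmLpm} with lower signs. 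All cross terms involving $\mfZ^\pm$ vanish because $D(\mfb^+),\mfz_V^+,\mfz_V^-$ are direct summands of the target and $[\scriptr_\pi,\mfZ^\pm_i]=0$, as $\mfgl(V)^\mfg$ commutes with $\pi(\mfg)$ in $\End(V)$.

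For bijectivity, surjectivity is immediate: by faithfulness of $\pi$ the coefficients of $\mbbL^\pm$ span $\mfb^\pm\subset D(\mfb^+)$, and together span $D(\mfb^+)$, while those of $\mfZ^\pm$ span $\mfz_V^\pm$. For injectivity, I would use Lemma \ref{L:Z-wtgr} to write $\mfg_\scriptr = \mathfrak{a} + \mathfrak{z}^+ + \mathfrak{z}^-$, where $\mathfrak{a}$ is the Lie subalgebra generated by the coefficients of $\msL^\pm$ and $\mathfrak{z}^\pm$ are the central subspaces spanned by the coefficients of $K^\pm$; a dimension count shows $\Phi_\scriptr|_{\mathfrak{z}^\pm}$ is injective, and disjointness of images in the target forces $\mathfrak{z}^+\cap\mathfrak{z}^-=0$. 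The crux of the argument -- and the main obstacle -- is to show $\Phi_\scriptr|_\mathfrak{a}:\mathfrak{a}\twoheadrightarrow D(\mfb^+)$ is injective. My strategy is to construct a linear section $\Psi:D(\mfb^+)\to\mathfrak{a}$ by choosing dual bases of $\mfb^\pm$ relative to the Manin triple pairing and extracting coefficients of $\msL^\pm\in\pi(\mfb^\pm)\otimes\mathfrak{a}$ (available by Lemma \ref{L:Z-wtgr}(2)), and then verify $\Psi$ is a Lie algebra homomorphism by matching structure constants: the $LL$-relations recorded in Remark \ref{R:gL-sub} encode the same commutation rules on the $\mathfrak{a}$-side as the CYBE encodes on the $D(\mfb^+)$-side. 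Once $\Psi$ is both a section and a Lie homomorphism, its image is a Lie subalgebra of $\mathfrak{a}$ containing the generators, hence equals $\mathfrak{a}$, so $\Phi_\scriptr|_\mathfrak{a}$ is bijective. This in turn yields $\mathfrak{a}\cap(\mathfrak{z}^+\oplus\mathfrak{z}^-)\subset\ker\Phi_\scriptr|_\mathfrak{a}=0$, completing the proof that $\Phi_\scriptr$ is a Lie algebra isomorphism.

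Finally, for the cobracket formula, since $\Phi_\scriptr$ is an isomorphism $\delta_\scriptr$ is uniquely determined by $\Phi_\scriptr^{\otimes 2}\circ\delta_\scriptr=\delta\circ\Phi_\scriptr$, so it suffices to verify
\[
(\id\otimes\delta)(\mbbL^\pm\pm\mfZ^\pm) = [(\mbbL^\pm\pm\mfZ^\pm)_{[1]},(\mbbL^\pm\pm\mfZ^\pm)_{[2]}].
\]
The standard identity $(\id\otimes\delta)r=-[r_{12},r_{13}]$ for an $r$-matrix $r$ inducing its cobracket, combined with the coalgebra anti-involution property $(\omega\otimes\omega)\delta=-\delta\omega$ from Section \ref{ssec:Db-Chev}, yields $(\id\otimes\delta)\mbbL^+=[\mbbL^+_{[1]},\mbbL^+_{[2]}]$; the corresponding identity for $\mbbL^-$ follows by the same computation applied to $-\scriptr^D_{21}$, which induces the same cobracket on $D(\mfb^+)$ by invariance of $\scriptr^D+\scriptr^D_{21}$. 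The identity $(\id\otimes\delta)\mfZ^\pm=\pm[\mfZ^\pm_{[1]},\mfZ^\pm_{[2]}]$ is the canonical duality between the bracket on $\mfgl(V)^\mfg$ and the cobracket on $\mfz_V^\pm$, with the sign reflecting $\delta_{\mfz_V^-}=-\delta_{\mfz_V^+}$. All cross-terms vanish by direct-sum compatibility, concluding the verification.
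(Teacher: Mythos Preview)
Your plan is correct, and the well-definedness and cobracket portions match the paper's argument closely (with only cosmetic differences in which permutations of the CYBE you invoke and how you package the $\omega$-intertwining). The substantive divergence is in the injectivity step.

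You identify injectivity of $\Phi_\scriptr|_{\mathfrak{a}}$ as the main obstacle and propose constructing a linear section $\Psi:D(\mfb^+)\to\mathfrak{a}$, then verifying it is a Lie homomorphism by matching structure constants. This would work, but it is considerably more labor than necessary, and the structure-constant verification effectively re-derives information already contained in Lemma~\ref{L:Z-wtgr}. The paper instead uses a one-line dimension count: the defining relations \eqref{gr:LpmLpm}--\eqref{gr:L+L-} force the coefficients of $L^+$ and $L^-$ to span $\mfg_\scriptr$, and Lemma~\ref{L:Z-wtgr}\eqref{wtgr:Z2} gives $L^\pm \in (\pi(\mfb^\pm)\oplus\mfgl(V)^\mfg)\otimes\mfg_\scriptr$, so the span of the coefficients of $L^\pm$ has dimension at most $\dim\mfb^\pm + \dim\mfgl(V)^\mfg$. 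Summing,
\[
\dim\mfg_\scriptr \leq \dim\mfb^+ + \dim\mfb^- + 2\dim\mfgl(V)^\mfg = \dim\bigl(D(\mfb^+)\oplus\mfz_V^+\oplus\mfz_V^-\bigr),
\]
and surjectivity finishes the argument. Since you are already invoking Lemma~\ref{L:Z-wtgr} to locate $\msL^\pm$ inside $\pi(\mfb^\pm)\otimes\mathfrak{a}$, you have this dimension bound in hand and can dispense with $\Psi$ entirely. The section approach does buy something conceptually---it makes explicit that $D(\mfb^+)$ embeds as a Lie subalgebra, anticipating Corollary~\ref{C:gr->D(b)}---but for the theorem itself the dimension count is both shorter and avoids the bookkeeping of verifying that $\Psi$ respects the mixed bracket $[\mfb^+,\mfb^-]$.
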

\begin{proof}
First, observe that $\mbbL^\pm\in \pi(\mfb^\pm)\otimes \mfb^\pm$ and $\mfZ^\pm\in \mfgl(V)^\mfg\otimes \mfz_V^\pm$.
As $\mfgl(V)^\mfg$ and $\pi(\mfh)$ are both subsets of the zero weight space $\mfgl(V)_0$ and $\pi(\mfg_\alpha)\subset \mfgl(V)_{\alpha}$ for each root $\alpha$ of $\mfg$, it follows that $\mathbb{L}^\pm \pm {\mathfrak{Z}}^\pm$ satisfy  the triangularity relations: 
\begin{equation*}
(\mathbb{L}^\pm \pm {\mathfrak{Z}}^\pm)_\lambda=\mathbb{L}^\pm_\lambda \pm {\mathfrak{Z}}^\pm_\lambda =0 \quad \forall \,\lambda \in \dot{\msQ}_\mp. 
\end{equation*}
Let us now argue that the stated assignment also preserves the defining relations \eqref{gr:LpmLpm} and \eqref{gr:L+L-} of $\mfg_\scriptr$. Since the coefficients of $\mfZ^\pm$ are central elements in 
$D(\mfb^+)\oplus \mfz_V^+\oplus \mfz_V^-$, we have 
\begin{equation*}
\left[\mbbL_1^\pm \pm \mfZ^\pm_1, \mbbL_2^\pm \pm \mfZ^\pm_2\right]=[\mbbL_1^\pm,\mbbL_2^\pm],
\quad 
\left[\mbbL_1^+ + \mfZ^+_1, \mbbL_2^- - \mfZ^-_2\right]=[\mbbL_1^+,\mbbL_2^-].
\end{equation*}
Moreover, since $\scriptr_\pi\in \pi(\mfg)\otimes \pi(\mfg)$ and $\mfZ^\pm\in \mfgl(V)^\mfg\otimes \mfz_V^\pm$, we have 
\begin{equation*}
[\scriptr_\pi, \mfZ_1^\pm]=0=[\scriptr_\pi, \mfZ_2^\pm].
\end{equation*}
It therefore suffices to show that $\mbbL^\pm$ satisfy the defining relations \eqref{gr:LpmLpm} and \eqref{gr:L+L-} of $\mfg_\scriptr$. This is a consequence of the classical Yang--Baxter equation \eqref{CYBE-rD}. Indeed, after rewriting this relation as 
\begin{equation*}
[\scriptr_{13}^D,\scriptr_{23}^D]=-[\scriptr_{12}^D,\scriptr_{13}^D+\scriptr_{23}^D],
\end{equation*}
we obtain the following, after applying $\pi\otimes \pi\otimes \omega$:
\begin{equation*}
[\mbbL_1^+,\mbbL_2^+]=-[\scriptr_\pi,\mbbL_1^++\mbbL_2^+].
\end{equation*}
Applying the permutation operator $(1\,2)\circ (1\,3)$ to \eqref{CYBE-rD}, rearranging, and then applying $\pi\otimes \pi \otimes \omega$ gives instead
\begin{equation*}
[\mbbL_1^-,\mbbL_2^-]=-[\scriptr_\pi,\mbbL_1^-+\mbbL_2^-]. 
\end{equation*}
Finally, after acting by $(2\,3)$ on \eqref{CYBE-rD} and then applying $\pi\otimes \pi \otimes \omega$, we obtain
\begin{equation*}
[\mbbL_1^+,\mbbL_2^-]=-[\scriptr_\pi,\mbbL_1^++\mbbL_2^-].
\end{equation*}
This completes the proof that the assignment $L^\pm \mapsto \mbbL^\pm \pm \mfZ^\pm$ extends to a Lie algebra homomorphism $\Phi_\scriptr$, as in the statement of the theorem. As $\mbbL^\pm\in \pi(\mfg)\otimes D(\mfb^+)$, $\mfZ^\pm\in \mfgl(V)^\mfg \otimes \mfz_V^\pm$, and the sum $\pi(\mfg)+\mfgl(V)^\mfg$ is direct, the image of $\Phi_\scriptr$ contains the coefficients of $\mbbL^\pm$ and the coefficients of $\mfZ^\pm$. As the coefficients of $\mbbL^\pm$ span $\mfn^\pm\oplus \mfh^\pm$ and the coefficents of $\mfZ^\pm$ span $\mfz_V^\pm$,  we can conclude that $\Phi_\scriptr$ is surjective. In order to conclude that $\Phi_\scriptr$ is an isomorphism, it is sufficient to establish that  $\dim \mfg_\scriptr\leq \dim(D(\mfb^+)\oplus \mfz_V^+\oplus \mfz_V^-)$. 

To this end,  note that the defining relations \eqref{gr:LpmLpm} and \eqref{gr:L+L-} imply that the coefficients of $L^+$ and $L^-$ span $\mfg_\scriptr$. Moreover, by Part \eqref{wtgr:Z2} of Lemma \ref{L:Z-wtgr}, $L^\pm \in (\pi(\mfb^\pm)\oplus \mfgl(V)^\mfg)\otimes \mfg_\scriptr$. It follows that the span of the coefficients of $L^\pm$ has dimension at most $\dim(\mfb^\pm)+\dim \mfgl(V)^\mfg$, and hence that 
\begin{equation*}
 \dim \mfg_\scriptr \leq \dim(\mfb^+)+\dim(\mfb^-)+2\dim \mfgl(V)^\mfg = \dim(D(\mfb^+)\oplus \mfz_V^+\oplus \mfz_V^-).
 \end{equation*}
 Therefore, $\Phi_\scriptr$ is an isomorphism of Lie algebras. 
 
 Consider now the second statement of the theorem, concerning the Lie cobracket $\delta_\scriptr= (\Phi_{\scriptr}^{-1})^{\otimes 2} \circ \delta \circ \Phi_\scriptr$ on $\mfg_\scriptr$. To see that $\delta_\scriptr(L^\pm)=[L_{[1]}^\pm,L_{[2]}^\pm]$, we must show that 
 \begin{equation*}
 \delta(\mathbb{L}^\pm \pm \mfZ^\pm)=[\mathbb{L}^\pm_{[1]} \pm \mfZ^\pm_{[1]},\mathbb{L}^\pm_{[2]} \pm \mfZ^\pm_{[2]}].
 \end{equation*}
 By definition of the Lie cobracket on $\mfz_V^\pm$ (see Section \ref{ssec:g-inv-dual}), the left-hand side is $\delta(\mbbL^\pm)+[\mfZ_{[1]}^\pm,\mfZ_{[2]}^\pm]$. On the other hand, since $\mbbL^\pm\in \pi(\mfg)\otimes D(\mfb^+)$ and $\mfZ^\pm\in \mfgl(V)^\mfg\otimes \mfz_V^\pm$, the right-hand side is 
 \begin{equation*}
[\mathbb{L}^\pm_{[1]} \pm \mfZ^\pm_{[1]},\mathbb{L}^\pm_{[2]} \pm \mfZ^\pm_{[2]}]=[\mathbb{L}^\pm_{[1]},\mathbb{L}^\pm_{[2]}]+[\mfZ_{[1]}^\pm,\mfZ_{[2]}^\pm]. 
 \end{equation*}%
 We are thus left to establish that $\delta(\mbbL^\pm)=[\mathbb{L}^\pm_{[1]},\mathbb{L}^\pm_{[2]}]$. As $D(\mfb^+)$ is quasitriangular with associated $r$-matrix $\scriptr^D$, we have 
 \begin{equation*}
 \delta(\mbbL^\pm)=[\mbbL^\pm_{[1]}+\mbbL^\pm_{[2]}, \scriptr_{23}^D]=(\pi \otimes \omega^{\otimes 2})([\scriptr^\pm_{12}+\scriptr^\pm_{13},\scriptr_{32}^D]),
 \end{equation*}
 where we have set $\scriptr^+=\scriptr^D$ and $\scriptr^-=-\scriptr_{21}^D$ and used that $(\omega\otimes \omega)\scriptr^D=\scriptr^D_{21}$. The right-hand side will equal $[\mathbb{L}^\pm_{[1]},\mathbb{L}^\pm_{[2]}]$ provided  $[\scriptr^\pm_{12},\scriptr^\pm_{13}]=[\scriptr_{12}^\pm + \scriptr_{13}^\pm, \scriptr_{32}^D]$ or, equivalently, provided 
 \begin{equation*}
[\scriptr_{12}^D,\scriptr_{13}^D]=[\scriptr_{12}^D + \scriptr_{13}^D, \scriptr_{32}^D] \quad \text{ and }\quad  [\scriptr_{21}^D,\scriptr_{31}^D]=[\scriptr_{32}^D,\scriptr_{21}^D + \scriptr_{31}^D].
 \end{equation*}
 Each of these relations is independently equivalently to the classical Yang--Baxter equation \eqref{CYBE-rD}, and are thus satisfied. 
\end{proof}
The above theorem implies that the Drinfeld double $D(\mfb^+)$ can be recovered as both a subalgebra and a quotient of $\mfg_\scriptr$. 
In more detail, let $\mfz_{\scriptr}^\pm$ be Lie ideal of $\mfg_\scriptr$ spanned by the coefficients of 
\begin{equation*}
K^\pm=L^\pm-\kappa^{-1}\Casimir(L^\pm).
\end{equation*}
Note that $\Phi_{\scriptr}(K^\pm)=\mbbL^\pm\pm \mfZ^\pm-\kappa^{-1}\Casimir(\mbbL^\pm\pm \mfZ^\pm) =\pm \mfZ^\pm$. In particular, $\mfz_{\scriptr}^\pm$ is a Lie bialgebra ideal of $\mfg_\scriptr=(\mfg_\scriptr,\delta_{\scriptr})$, and one has $\Phi_{\scriptr}(\mfz_{\scriptr}^\pm)=\mfz_V^\pm$. As an immediate consequence of Theorem \ref{T:g-main}, we obtain the following corollary. 
\begin{corollary}\label{C:gr->D(b)}
The composition of $\Phi_\scriptr$ with the natural projection $D(\mfb^+)\oplus \mfz_V^+\oplus \mfz_V^-\onto D(\mfb^+)$ induces an isomorphism of Lie bialgebras 
\begin{equation*}
 {\mfg_\scriptr}/(\mfz_{\scriptr}^+ + \mfz_{\scriptr}^-)\iso D(\mfb^+). 
\end{equation*}
Moreover, the subspace $\mfg_{\scriptr,\msL}$  of $\mfg_\scriptr$ spanned by the coefficients of $\msL^+$ and $\msL^-$ is a Lie sub-bialgebra and $\Phi_\scriptr$ restricts to an isomorphism 
\begin{equation*}
\Phi_\scriptr|_{\mfg_{\scriptr,\msL}}:\mfg_{\scriptr,\msL}\iso D(\mfb^+).
\end{equation*}
\end{corollary}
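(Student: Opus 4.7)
The plan is to deduce both assertions as essentially formal consequences of Theorem \ref{T:g-main}, using only that $\Phi_\scriptr$ is a Lie bialgebra isomorphism, the observation $\Phi_\scriptr(\msL^\pm)=\mbbL^\pm$ (which follows from $\Phi_\scriptr(L^\pm)=\mbbL^\pm\pm \mfZ^\pm$ and $\Phi_\scriptr(K^\pm)=\pm\mfZ^\pm$), and the fact (already noted in the paragraph preceding the corollary) that $\mfz_\scriptr^\pm$ is a Lie bialgebra ideal of $\mfg_\scriptr$ with $\Phi_\scriptr(\mfz_\scriptr^\pm)=\mfz_V^\pm$.

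For the first claim, the sum $\mfz_\scriptr^+ + \mfz_\scriptr^-$ is again a Lie bialgebra ideal of $\mfg_\scriptr$, and $\Phi_\scriptr$ maps it bijectively onto the Lie bialgebra ideal $\mfz_V^+\oplus \mfz_V^-$ of $D(\mfb^+)\oplus \mfz_V^+ \oplus \mfz_V^-$. Standard linear algebra therefore produces an induced isomorphism of Lie bialgebras
\[
\mfg_\scriptr/(\mfz_{\scriptr}^+ + \mfz_{\scriptr}^-) \iso (D(\mfb^+)\oplus \mfz_V^+\oplus \mfz_V^-)/(\mfz_V^+ \oplus \mfz_V^-) \cong D(\mfb^+),
\]
and unwinding definitions shows that this composition is precisely that of $\Phi_\scriptr$ with the natural projection, as asserted.

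For the second claim, the key is to identify $\mfg_{\scriptr,\msL}$ with $\Phi_\scriptr^{-1}(D(\mfb^+))$. Since $\mbbL^\pm\in \End(V)\otimes D(\mfb^+)$, the inclusion $\mfg_{\scriptr,\msL}\subseteq \Phi_\scriptr^{-1}(D(\mfb^+))$ is immediate from $\Phi_\scriptr(\msL^\pm)=\mbbL^\pm$. For the reverse inclusion, I will use the observation from the proof of Theorem \ref{T:g-main} that the coefficients of $L^+$ and $L^-$ span $\mfg_\scriptr$ as a vector space. Given any $x\in \mfg_\scriptr$, the decomposition $L^\pm=\msL^\pm+K^\pm$ then allows one to write $x=a+b$ with $a\in \mfg_{\scriptr,\msL}$ and $b\in \mfz_\scriptr^+ + \mfz_\scriptr^-$. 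If $\Phi_\scriptr(x)\in D(\mfb^+)$, then $\Phi_\scriptr(b)\in D(\mfb^+)\cap (\mfz_V^+\oplus \mfz_V^-)=0$, and injectivity of $\Phi_\scriptr$ forces $b=0$, so $x\in \mfg_{\scriptr,\msL}$.

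Once this identification is in place, $\mfg_{\scriptr,\msL}$ is the preimage of a Lie sub-bialgebra under the Lie bialgebra isomorphism $\Phi_\scriptr$, hence is itself a Lie sub-bialgebra, and $\Phi_\scriptr|_{\mfg_{\scriptr,\msL}}$ is the desired isomorphism onto $D(\mfb^+)$. There is no genuine obstacle in this argument; the only step requiring some care is the reverse inclusion above, which rests on the spanning property of the $L^\pm$-coefficients together with the direct sum structure of the target of $\Phi_\scriptr$.
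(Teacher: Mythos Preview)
Your argument is correct and matches the paper's intent: the paper states the corollary as an immediate consequence of Theorem \ref{T:g-main}, relying on exactly the facts you invoke (that $\Phi_\scriptr(K^\pm)=\pm\mfZ^\pm$, hence $\Phi_\scriptr(\mfz_\scriptr^\pm)=\mfz_V^\pm$, and that $\Phi_\scriptr(\msL^\pm)=\mbbL^\pm$). Your identification $\mfg_{\scriptr,\msL}=\Phi_\scriptr^{-1}(D(\mfb^+))$ via the decomposition $x=a+b$ with $a\in\mfg_{\scriptr,\msL}$ and $b\in\mfz_\scriptr^++\mfz_\scriptr^-$ is a clean way to spell out the details the paper leaves implicit.
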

\begin{remark}\label{R:gr->g}
Since $D(\mfb^+)\cong \mfg \oplus \mfh$ (see Proposition \ref{P:b-dbl}), $\mfg$ itself can be realized both as Lie subalgebra of $\mfg_\scriptr$ and as a Lie bialgebra quotient. As a Lie subalgebra, it is spanned by the coefficients of $\msF:=\msL^- - \msL^+$, which satisfy the defining relations of $\mfg$ spelled out in \cite{WRTT}*{Prop.~4.4}. Similarly, it is the quotient of $\mfg_\scriptr$ by the ideal spanned by the coefficients of the (central) matrices
\begin{equation*}
K^\pm=L^\pm - \tfrac{1}{\kappa}\Casimir(L^\pm), \quad L^+_0+L^-_0,
\end{equation*}
where $L^\pm_0=\mathbf{1}_0(L^\pm)$ is the projection of $L^\pm$ onto its weight zero component. The quantum analogues of these observations will be established in Theorem \ref{T:URg->Uhg}.
\end{remark}
%
%
\subsection{The Borel subalgebras $\mfb^\pm_\scriptr$}\label{ssec:g-borel}

We conclude this section by noting that the above results naturally output matrix presentations of (central extension of) the Borel subalgebras $\mfb^\pm$ of $\mfg$. 

\begin{definition}\label{D:br}
Let $\mfb^+_\scriptr$ and $\mfb^-_\scriptr$ be the Lie algebras generated by $\{l_{ij}^+\}_{i,j\in \mcI}$ and $\{l_{ij}^-\}_{i,j\in \mcI}$, respectively, subject to the relations
\begin{gather}
L^\pm_{\lambda}=0 \quad \forall\; \lambda\in \dot{\mathsf{Q}}_{\mp}, \\
[L_1^\pm,L_2^\pm]=-[\scriptr_\pi,L_1^\pm+L_2^\pm],
\end{gather}
where $L^\pm$ is the generating matrix 
\begin{equation*}
L^\pm=\sum_{i,j\in \mcI} E_{ij}\otimes l_{ij}^\pm \in \End(V)\otimes \mfb^\pm_\scriptr.
\end{equation*}
\end{definition}
Here we emphasize that each symbol $\pm$ and $\mp$ only takes its upper value for $\mfb^+_\scriptr$, and its lower value for $\mfb^-_\scriptr$. 
It follows from this definition that, for each choice of the symbol $\pm$, there is a Lie algebra homomorphism 
\begin{equation*}
\imath^{\scriptscriptstyle{\pm}}_\scriptr:\mfb^\pm_\scriptr\to \mfg_\scriptr, \quad \imath^{\scriptscriptstyle{\pm}}_\scriptr(L^\pm)=L^\pm. 
\end{equation*}
Our choice of notation for the generators of $\mfb^\pm_\scriptr$ is justified by the following corollary.

\begin{corollary}\label{C:b_r}
The composite $\Phi^\pm_\scriptr=\Phi_\scriptr \circ \imath^{\scriptscriptstyle{\pm}}_\scriptr$ is an isomorphism of Lie algebras 
\begin{equation*}
\Phi^\pm_\scriptr:\mfb^\pm_\scriptr\iso \mfb^\pm \oplus \mfz_V^\pm.
\end{equation*}
In particular, $\imath^{\scriptscriptstyle{\pm}}_\scriptr$ is injective and identifies $\mfb^\pm_\scriptr$ with the Lie sub-bialgebra of $\mfg_\scriptr$ generated by the coefficients of $L^\pm$. 
\end{corollary}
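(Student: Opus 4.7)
The plan is to closely mirror the proof of Theorem \ref{T:g-main}, but restricted to a single choice of sign. Since $\Phi^\pm_\scriptr=\Phi_\scriptr\circ \imath^{\scriptscriptstyle \pm}_\scriptr$ sends $L^\pm$ to $\mbbL^\pm \pm \mfZ^\pm$, the containments $\mbbL^\pm\in \pi(\mfb^\pm)\otimes \mfb^\pm$ and $\mfZ^\pm\in \mfgl(V)^\mfg\otimes \mfz_V^\pm$ show immediately that the image of $\Phi^\pm_\scriptr$ lies in $\mfb^\pm\oplus \mfz_V^\pm$. Surjectivity onto $\mfb^\pm\oplus \mfz_V^\pm$ is then essentially tautological: the coefficients of $\mbbL^\pm$ span $\mfh^\pm\oplus \mfn^\pm\cong \mfb^\pm$, the coefficients of $\mfZ^\pm$ span $\mfz_V^\pm$, and these sets remain linearly independent since $\pi(\mfg)+\mfgl(V)^\mfg$ is a direct sum inside $\mfgl(V)$.

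The key step is the dimension bound $\dim \mfb^\pm_\scriptr\leq \dim \mfb^\pm + \dim \mfgl(V)^\mfg$. To obtain it, I would revisit the argument establishing Part \eqref{wtgr:Z2} of Lemma \ref{L:Z-wtgr} and check that it only invokes the triangularity relation \eqref{gr:triangle} together with the single-sign relation \eqref{gr:LpmLpm}. A direct inspection of that proof confirms this is the case: the cross relation \eqref{gr:L+L-} is needed only to establish Part \eqref{wtgr:Z1}, whereas the containment $L^\pm \in (\pi(\mfb^\pm)\oplus \mfgl(V)^\mfg)\otimes \mfg_\scriptr$ is derived from $[\Omega_\pi, K^\pm_i]=0$, an identity that follows entirely from the symmetric/antisymmetric projections of \eqref{gr:LpmLpm}. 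Applying this same reasoning in $\mfb^\pm_\scriptr$ yields $L^\pm\in (\pi(\mfb^\pm)\oplus \mfgl(V)^\mfg)\otimes \mfb^\pm_\scriptr$, and since the coefficients of $L^\pm$ generate $\mfb^\pm_\scriptr$ by Definition \ref{D:br}, the sought dimension bound follows. Combining with surjectivity, we conclude that $\Phi^\pm_\scriptr$ is an isomorphism of Lie algebras.

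The injectivity of $\imath^{\scriptscriptstyle \pm}_\scriptr$ then falls out of the factorization $\Phi^\pm_\scriptr=\Phi_\scriptr\circ \imath^{\scriptscriptstyle \pm}_\scriptr$, as both $\Phi^\pm_\scriptr$ and $\Phi_\scriptr$ are now known to be isomorphisms. By construction, the image of $\imath^{\scriptscriptstyle \pm}_\scriptr$ is the Lie subalgebra of $\mfg_\scriptr$ generated by the coefficients of $L^\pm$; that this subalgebra is in fact a Lie sub-bialgebra is a direct consequence of the explicit formula $\delta_\scriptr(L^\pm)=[L^\pm_{[1]},L^\pm_{[2]}]$ furnished by Theorem \ref{T:g-main}, since the right-hand side is a polynomial in the coefficients of $L^\pm$ alone.

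The main obstacle, as noted above, is the verification that Part \eqref{wtgr:Z2} of Lemma \ref{L:Z-wtgr} goes through using only the relations available in $\mfb^\pm_\scriptr$; once this is in hand, every remaining step is either a formal consequence of Theorem \ref{T:g-main} or a straightforward dimension count.
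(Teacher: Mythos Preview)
Your proposal is correct and follows essentially the same approach as the paper: both argue surjectivity as in Theorem \ref{T:g-main}, observe that Part \eqref{wtgr:Z2} of Lemma \ref{L:Z-wtgr} uses only the single-sign relations \eqref{gr:triangle} and \eqref{gr:LpmLpm}, and then conclude by the same dimension count. You additionally spell out the Lie sub-bialgebra claim via the formula $\delta_\scriptr(L^\pm)=[L^\pm_{[1]},L^\pm_{[2]}]$, which the paper leaves implicit.
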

\begin{proof}
That $\Phi^\pm_\scriptr$ is surjects onto $\mfb^\pm \oplus \mfz_V^\pm$ follows by the same argument as used to establish the surjectivity of $\Phi_\scriptr$ in the proof of Theorem \ref{T:g-main}. Next, note that the generating matrix $L^\pm$ of $\mfb^\pm_\scriptr$ satisfies 
\begin{equation*}
L^\pm\in (\pi(\mfb^\pm)\oplus \mfgl(V)^\mfg)\otimes \mfb^\pm_\scriptr. 
\end{equation*}
Indeed, this is proven identically to Part \eqref{wtgr:Z2} of Lemma \ref{L:Z-wtgr}. 
It follows from this fact and the same type of dimension argument as given in the proof of Theorem \ref{T:g-main} that $\Phi^\pm_\scriptr$ is injective, and thus an isomorphism. \qedhere
\end{proof}

\section{Recollections on \texorpdfstring{$\Uh{\mfg}$}{U\_hg}}\label{sec:Uhg}

\subsection{Topological Hopf algebras and quantizations}\label{ssec:h-adic}

For the remainder of this article, we will mostly be concerned with topological modules, algebras and Hopf algebras defined over the formal power series ring $\C[\![\hbar]\!]$. In this section we briefly summarize a number of facts about these structures, without proof, which are pertinent to our main results. We will follow the exposition given in \cite{WRQD}*{\S2} closely, though we refer the reader to \cite{KasBook95}*{\S XVI}, for example, for a more complete background. 

To begin, recall that a $\C[\![\hbar]\!]$-module $\mcV$ is said to be \textit{topologically free} if $\mcV\cong V[\![\hbar]\!]$ for some complex vector space $V$. This is equivalent to the requirement that $\mcV$ is separated, complete and torsion free as a $\C[\![\hbar]\!]$-module. The former two conditions mean precisely that the $\C[\![\hbar]\!]$-linear map 
\begin{equation*}
\mcV\to \varprojlim_{n}\left(\mcV/\hbar^n \mcV\right)
\end{equation*}
is injective and surjective, respectively. 
The \textit{semiclassical limit} of a $\C[\![\hbar]\!]$-module $\mcV$ is the space $\mcV/\hbar \mcV$. Note that if $\mcV$ is topologically free with $\mcV\cong V[\![\hbar]\!]$, then the underlying space $V$ can naturally be identified with this limit. Similarly, if $\mcV$ and $\mcW$ are two $\C[\![\hbar]\!]$-linear maps and $\varphi:\mcV\to \mcW$ is a $\C[\![\hbar]\!]$-linear map, then the semiclassical limit of $\varphi$ is the $\C$-linear map $\bar{\varphi}:\mcV/\hbar\mcV\to \mcW/\hbar\mcW$ uniquely determined by the commutativity of the diagram 
\begin{equation*}
\begin{tikzcd}[column sep=13ex]
 \mcV \arrow[two heads]{d}  \arrow{r}{\varphi} &  \mcW \arrow[two heads]{d}\\
 \mcV/\hbar\mcV \arrow{r}{\bar\varphi}       & \mcW/\hbar\mcW
\end{tikzcd}
\end{equation*}
where the vertical arrows represent the canonical quotient maps. We note the following useful facts: 
\begin{enumerate}[label=(L\arabic*)]
\item\label{L1} Suppose that $\mcV$ is separated and $\mcW$ is torsion free. Then $\varphi$ is injective provided $\bar\varphi$ is. 
\item\label{L2}  Suppose that $\mcV$ is complete and $\mcW$ is separated. Then $\varphi$ is surjective provided $\bar\varphi$ is. 
\end{enumerate}
These properties are explicitly stated in Lemma 2.1 of \cite{WRQD}, and are relatively straightforward applications of the relevant definitions. 

The topological tensor product $\mcV\,\widehat{\otimes}\,\mcW$ of the $\C[\![\hbar]\!]$-modules $\mcW$ and $\mcV$ is the $\hbar$-adic completion of their algebraic tensor product:
\begin{equation*}
\mcV\,\widehat{\otimes}\,\mcW:=\varprojlim_n (\mcV\otimes_{\C[\![\hbar]\!]}\mcW)/\hbar^n(\mcV\otimes_{\C[\![\hbar]\!]}\mcW).
\end{equation*}
The tensor product $\wh{\otimes}$ endows the category of separated and complete $\C[\![\hbar]\!]$-modules with a symmetric monoidal structure. Moreover, if $\mcV$ and $\mcW$ are topologically free with $\mcV\cong V[\![\hbar]\!]$ and $\mcW\cong W[\![\hbar]\!]$, then $\mcV\, \wh{\otimes}\, \mcW$ is topologically free with $\mcV\, \wh{\otimes}\, \mcW\cong (V\otimes_\C W)[\![\hbar]\!]$.  We refer the reader to \cite{KasBook95}*{\S XVI.3}, for example, for a comprehensive and elementary exposition to $\wh{\otimes}$ and its key properties. 

We will say that $\msA$ is a topological algebra over $\C[\![\hbar]\!]$ if it is an algebra over $\C[\![\hbar]\!]$ which is both separated and complete as a $\C[\![\hbar]\!]$-module. For instance, all unital associative $\C[\![\hbar]\!]$-algebras defined via (topological) generators and relations are understood as topological algebras; see \cite{KasBook95}*{\S XVII.2}. Similarly, a topological Hopf algebra $\msH$ over $\C[\![\hbar]\!]$ is a topological algebra equipped with a coproduct $\Delta:\msH\to \msH\,\wh{\otimes}\, \msH$, a counit $\veps: \msH\to \C[\![\hbar]\!]$ and an antipode $S:\msH\to \msH$, which satisfy the axioms of a Hopf algebra with all tensor products given by $\wh{\otimes}$.

 Let us now recall a few standard definitions from the theory of quantum groups, following \cite{ES}*{\S9}. 
\begin{definition}
A topological Hopf algebra $\msH$ over $\C[\![\hbar]\!]$ is called a \textit{quantized enveloping algebra} if it satisfies the following two conditions:
\begin{enumerate}\setlength{\itemsep}{3pt}
 \item $\msH$ has semiclassical limit $\msH/\hbar\msH$ isomorphic to the enveloping algebra $U(\mfa)$ of a complex Lie algebra $\mfa$ as a Hopf algebra. 
 \item $\msH$ is topologically free, and thus isomorphic to $U(\mfa)[\![\hbar]\!]$ as a $\C[\![\hbar]\!]$-module.
 \end{enumerate}
\end{definition}
A quantized enveloping algebra $\Uh{\mfa}$ with semiclassical limit $U(\mfa)$ automatically induces a Lie bialgebra structure on $\mfa$ with cobracket given by
\begin{equation*}
\delta(x):= \frac{\Delta(\dot{x})-\Delta^{\mathrm{op}}(\dot{x})}{\hbar} \mod \hbar U_\hbar\mfa\,\wh{\otimes}\, U_\hbar\mfa \quad \forall \; x\in \mfa,
\end{equation*}
where $\dot{x}\in U_\hbar\mfa$ is any lift of $x$. Conversely, if $(\mfa,\delta)$ is any Lie bialgebra, then a \textit{quantization} of $(\mfa,\delta)$ is a quantized enveloping algebra $\Uh{\mfa}$ with semiclassical limit $U(\mfa)$, for which  $\delta$ is recovered by the above formula.
\begin{remark}
For the remainder of this article, we will shall denote the topological tensor product $\widehat{\otimes}$ by $\otimes$. More generally, the use of this symbol will always be clear from the underlying context. 
\end{remark}
%
%

\subsection{The quantized enveloping algebra $\Uh{\mfg}$}\label{ssec:Uhg}

Throughout the rest of this paper, it is understood that $q=e^{\hbar/2}\in 1+\hbar\C[\![\hbar]\!]$. In addition, we shall employ the standard notation for Gaussian integers and binomial coefficient. Namely, if $m,n,r\in \Z$ with $n\geq r\geq 0$, then we set
\begin{gather*}
\sbinom{n}{r}_q=\frac{[n]_q!}{[r]_q![n-r]_q!}, \quad [m]_q!=[m]_q[m-1]_q\cdots [1]_q,\\ 
[m]_q=\frac{q^m-q^{-m}}{q-q^{-1}}.
\end{gather*}
In the following definition, $\{h_i\}_{i\in \mbI}\subset \mfh$ and $\{d_i\}_{i\in \mbI}\subset \Z_{>0}$ are as in Section \ref{ssec:g}. 
\begin{definition}\label{D:Uhg}
The Drinfeld--Jimbo algebra $\Uh{\mfg}$ is the unital, associative $\C[\![\hbar]\!]$-algebra topologically generated by $\mfh\cup\{E_i, F_i\}_{i\in \mbI}$, subject to the following relations for all $h,h^\prime\in \mfh$ and $i,j\in \mbI$:
\begin{gather*}
[h,h^\prime]=0, \\
 [h,E_j]=\alpha_j(h)E_j, \quad [h,F_j]=-\alpha_j(h)F_j,\\
[E_i,F_j]=\delta_{ij}\frac{q^{ h_i}-q^{-h_i}}{q_i-q_i^{-1}} \\ 
\sum_{b=0}^{1-a_{ij}}(-1)^b \sbinom{1-a_{ij}}{b}_{q_i} E_i^b E_j E_i^{1-a_{ij}-b}=0,\\
\sum_{b=0}^{1-a_{ij}}(-1)^b \sbinom{1-a_{ij}}{b}_{q_i} F_i^b F_j F_i^{1-a_{ij}-b}=0,
\end{gather*}
where  $q_i=q^{d_i}=e^{\hbar d_i/2}$ and in the last two relations $i\neq j$.
\end{definition}
\begin{remark}
Here we note that if $H_i\in \mfh$ is defined by $d_iH_i=h_i=\nu^{-1}(\alpha_i)$ (see Section \ref{ssec:g}), then $\{H_i,E_i,F_i\}_{i\in \mbI}$ generates $\Uh{\mfg}$ as a topological $\C[\![\hbar]\!]$-algebra and one has the familiar relations
\begin{equation*}
[H_i,E_j]=a_{ij}E_j, \quad [H_i,F_j]=-a_{ij}F_j,\quad [E_i,F_j]=\delta_{ij}\frac{q_i^{H_i}-q_i^{-H_i}}{q_i-q_i^{-1}}. 
\end{equation*}
\end{remark}
The $\C[\![\hbar]\!]$-algebra $\Uh{\mfg}$ admits the structure of a topological Hopf algebra over $\C[\![\hbar]\!]$, with coproduct $\Delta$, antipode $S$, and counit $\veps$ uniquely determined by the requirement that the image of $\mfh$ in $\Uh{\mfg}$ is primitive and $\{E_i,F_i\}_{i\in \mbI}$ satisfy
\begin{gather*}
\Delta(E_i)=E_i\otimes q^{h_i} + 1\otimes E_i, \quad S(E_i)=-E_i q^{-h_i}, \quad \veps(E_i)=0,\\
\Delta(F_i)=F_i\otimes 1 + q^{-h_i}\otimes F_i, \quad S(F_i)=-q^{h_i}F_i,\quad \veps(F_i)=0.
\end{gather*}
These formulas, together with Definition \ref{D:Uhg}, imply that $\Uhg$ is a Hopf algebra deformation of $U(\mfg)$ over $\C[\![\hbar]\!]$. Namely, if $d_i$,  $h_i$ and $x_i^\pm:=x_{\alpha_i}^\pm$ are as in Sections \ref{ssec:g} and \ref{ssec:Db-Chev}, then the assignment $h_i\mapsto h_i$, $E_i\mapsto \sqrt{d_i} x_i^+$, $F_i\mapsto \sqrt{d_i} x_i^-$ gives rise to a surjective Hopf algebra morphism $\Uhg\onto U(\mfg)$ (where $\hbar$ operates as $0$ in $U(\mfg)$) which induces an isomorphism 
\begin{equation*}
\Uhg/\hbar \Uhg \iso U(\mfg)
\end{equation*}
of Hopf algebras over $\C$. In addition, it is well-known that $\Uhg$ is topologically free over $\C[\![\hbar]\!]$, and thus a quantized enveloping algebra with semiclassical limit $U(\mfg)$. Furthermore, it is quasitriangular and the Lie bialgebra structure it quantizes is the so-called standard structure recalled below Proposition \ref{P:b-dbl}. As we shall review in Section \ref{ssec:DUhb}, this is best captured by realizing $\Uhg$ as a Hopf quotient of the quantum double $D(\Uh{\mfb})$ of its Borel subalgebra $\Uh{\mfb}$, in complete analogy with the classical story summarized in Section \ref{ssec:D(b)}.

To conclude this subsection, we recall that the adjoint action of $\mfh\subset \Uhg$ on $\Uhg$ gives rise to a $\msQ$-graded topological Hopf algebra structure on $\Uhg$ with homogeneous components 
\begin{equation*}
\Uhg_\beta=\{x\in \Uhg:[h,x]=\beta(h)x \quad \forall\; h\in \mfh\} \quad \forall\; \beta\in \msQ. 
\end{equation*}
Equivalently, each subspace $\Uhg_\beta$ is a closed $\C[\![\hbar]\!]$-submodule of $\Uhg$ and 
\begin{equation*}
\Uhg_\msQ=\bigoplus_{\beta\in\msQ} \Uhg_\beta
\end{equation*}
is a dense, $\msQ$-graded $\C[\![\hbar]\!]$-subalgebra of $\Uhg$ with induced topology that coincides with its $\hbar$-adic topology, and the structure maps $\Delta$, $S$ and $\veps$ are all $\msQ$-graded.

\subsection{The Borel subalgebra $\Uh{\mfb}$ and its dual}\label{ssec:Uhb}

Now let $\Uh{\mfh}$, $\Uh{\mfb}$ and $\Uh{\mfn}$ be the $\C[\![\hbar]\!]$-subalgebras of $\Uhg$ topologically generated by $\mfh$,  $\mfh\cup \{E_i\}_{i\in \mbI}$ and $\{E_i\}_{i\in \mbI}$, respectively. Then $\Uh{\mfh}$ and $\Uh{\mfb}$ are topological Hopf subalgebras of $\Uhg$ which provide quantizations of $\mfh$ and $\mfb$ (viewed as Lie sub-bialgebras of $\mfg$), with $\Uh{\mfh}$ isomorphic to the trivial deformation $U(\mfh)[\![\hbar]\!]\cong \msS(\mfh)[\![\hbar]\!]$ as a topological Hopf algebra. Furthermore, both $\Uh{\mfb}$ and $\Uh{\mfn}$ inherit $\msQ_+$-gradings from the $\msQ$-grading on $\Uhg$, and the multiplication map
\begin{equation*}
m:\Uh{\mfh}\otimes \Uh{\mfn}\to \Uh{\mfb}
\end{equation*}
provides an isomorphism of $\msQ_+$-graded topological $\C[\![\hbar]\!]$-modules. In particular, 
\begin{equation*}
(\Uh{\mfn})_{\alpha_i}=\C[\![\hbar]\!] E_i \quad \forall\; i\in \mbI.
\end{equation*}
 We shall also set $\Uh{\mfb^-}=\omega_\hbar(\Uh{\mfb})$ and $\Uh{\mfn^-}=\omega_\hbar(\Uh{\mfn})$, where $\omega_\hbar$ is the \textit{Chevalley involution} on $\Uhg$. That is, it is the involutive $\C[\![\hbar]\!]$-algebra automorphism of $\Uhg$ uniquely determined by $\omega_\hbar(h)=-h$ for all $h\in \mfh\subset \Uhg$, while
\begin{equation*}
\omega_\hbar(E_i)=-F_i \quad \text{ and }\quad \omega_\hbar(F_i)=-E_i \quad \forall\; i\in \mbI.
\end{equation*}
When discussing $\Uh{\mfb}$ and $\Uh{\mfb^-}$ (resp. $\Uh{\mfn}$ and $\Uh{\mfn^-}$) we will sometimes write $\Uh{\mfb^+}$ (resp. $\Uh{\mfn^+}$) for the former. 

Let us now recall the definition of the quantized enveloping algebra dual of $\Uh{\mfb}$, following \cite{DrQG}*{\S7}, \cite{Etingof-Kazhdan-I}*{\S4.4} and \cite{Gav02}; see also \cite{CPBook}*{\S6.3C}. We first consider the more general situation where we are given a quantization $\Uh{\mfa}$ of an arbitrary finite-dimensional Lie bialgebra $\mfa$. Consider the $\C[\![\hbar]\!]$-module 
\begin{equation*}
\Uh{\mfa}^\prime=\{x\in \Uh{\mfa}: (\id-\veps)^{\otimes n} \Delta^n(x)\in \hbar^n \Uh{\mfa}^{\otimes n} \; \forall\; n\geq 0\}\subset \Uh{\mfa},
\end{equation*}
where $\veps$ and $\Delta$ are the counit and coproduct on $\Uh{\mfa}$, and $\Delta^n: \Uh{\mfa}\to \Uh{\mfa}^{\otimes n}$ is defined recursively by $\Delta^0=\veps$, $\Delta^1 =\id$, and 
\begin{equation*}
\Delta^n=(\Delta\otimes \id^{\otimes (n-2)})\circ \Delta^{n-1} \quad \forall\; n\geq 2.
\end{equation*}
Then, by \cite{DrQG}*{\S7} (see \cite{Gav02}*{Prop.~3.6} for a detailed proof), $\Uh{\mfa}^\prime$ is a \textit{quantized formal series Hopf algebra}. In particular, it is a topological Hopf algebra with respect to the subspace topology, which coincides with the $\mcJ_\mfa$-adic topology, where 
\begin{equation*}
\mcJ_\mfa=\hbar\Uh{\mfa} \cap \Uh{\mfa}^\prime= \veps_{\Uh{\mfa}^\prime}^{-1}(\hbar\C[\![\hbar]\!]).
\end{equation*}
Moreover, its semiclasical limit is isomorphic, as an algebra, to the completion $\widehat{\msS(\mfa)}$ of the symmetric algebra $\msS(\mfa)=\bigoplus_{n\geq 0}\msS^n(\mfa)$ with respect to its standard grading. In the present article, we call $\Uh{\mfa}^\prime$ the \textit{Drinfeld--Gavarini subalgebra} of $\Uh{\mfa}$.

The \textit{quantized enveloping algebra dual} $\Uh{\mfa}{\vphantom{)}}^\bullet$ to $\Uh{\mfa}$ is then the subspace of the $\C[\![\hbar]\!]$-linear dual $(\Uh{\mfa}^\prime){\vphantom{)}}^\ast=\Hom_{\C[\![\hbar]\!]}( \Uh{\mfa}^\prime, \C[\![\hbar]\!])$ consisting of continuous linear forms with respect to the aforementioned topology. The general theory dictates that it is a quantized enveloping algebra which quantizes the Lie bialgebra dual $\mfa^\ast$ of $\mfa$.

Now let us narrow our focus to the particular case where $\mfa=\mfb$. Our present aim is to identify a collection of elements in $\Uh{\mfb}{\vphantom{)}}^\bullet$ which will play an important role in Section \ref{ssec:DUhb}. 
To this end, note that the formulas for $\Delta$ and $\veps$ given in Section \ref{ssec:Uhg} imply that 
$\hbar \mfh$ and $\hbar E_i$ belong to\footnote{They do not, however, generate it as topological algebra; we refer the reader to \cite{Gav02}*{\S3.5} for an explicit description of $\Uh{\mfb}^\prime$.} $\Uh{\mfb}^\prime$, for each $i\in \mbI$. Let 
\begin{equation*}
\mathbf{1}_{\alpha_i}: \Uh{\mfn}\to (\Uh{\mfn})_{\alpha_i} = \C[\![\hbar]\!]E_i
\end{equation*}
  be the projection onto $(\Uh{\mfn})_{\alpha_i}$,  arising from the $\msQ_+$-grading on $\Uh{\mfn}$. Let $\partial_i$ be the $\C[\![\hbar]\!]$-linear derivative with respect to $E_i$ on $(\Uh{\mfn})_{\alpha_i}$: $\partial_i(E_i)=1$. We then define $f_i,\chi_i\in \Uh{\mfb}^\ast$ by
\begin{gather*}
f_i:= \partial_i\circ \mathbf{1}_{\alpha_i}\circ (\veps\otimes \id): \Uh{\mfb}\cong \Uh{\mfh}\otimes \Uh{\mfn} \to \C[\![\hbar]\!],\\
\chi_i:=\omega_i \circ (\id \otimes \veps):\Uh{\mfb}\cong \Uh{\mfh}\otimes \Uh{\mfn} \to \C[\![\hbar]\!],
\end{gather*}
where $\omega_i\in \mfh^\ast \subset \msS(\mfh)^\ast$ is the $i$-th fundamental weight of $\mfg$, and is extended by $\C[\![\hbar]\!]$-linearity to an element of $\msS(\mfh)[\![\hbar]\!]^\ast \cong \Uh{\mfh}^\ast$. 

Since $\hbar\mfh + \hbar \bigoplus_{i\in \mbI}(\Uh{\mfn})_{\alpha_i}\subset \Uh{\mfb}^\prime$, we may then define $\upxi_i$ and $\eta_i$ in $\Uh{\mfb}{\vphantom{)}}^\bullet$ by 
\begin{equation*}
\upxi_i= \hbar^{-1} \chi_i|_{\Uh{\mfb}^\prime} \quad \text{ and }\quad \eta_i= \hbar^{-1} f_i|_{\Uh{\mfb}^\prime} \quad \forall \quad i\in \mbI.
\end{equation*}
Though we shall not need this fact in what follows, it is worth pointing out that the above elements generate $\Uh{\mfb}{\vphantom{)}}^\bullet$ as a topological $\C[\![\hbar]\!]$-algebra, and that the co-opposite Hopf algebra $\chk{\Uh{\mfb}}:=(\Uh{\mfb}{\vphantom{)}}^\bullet)^{\mathrm{cop}}$ is isomorphic to $\Uh{\mfb^-}$, as is readily recovered from Theorem \ref{T:dbl-Uhg(x)S} below and the relations above \eqref{DUhb->Uhg}. 
%
\subsection{The quantum double $D(\Uh{\mfb})$}\label{ssec:DUhb}
In this subsection, we briefly review the definition of the quantum double $D(\Uh{\mfb})$ and its realization as $\Uhg\otimes \msS(\mfh)[\![\hbar]\!]$. We begin by recalling Drinfeld's original definition (see \cite{DrQG}*{\S13}) for the quantum double of an arbitrary quantized enveloping algebra of finite type, following the summary given in \cite{WRQD}*{\S8.1}. 

Let $\mfa$ be a finite-dimensional Lie bialgebra, and let $\Uh{\mfa}$ be a quantization of $\mfa$, as in Section \ref{ssec:Uhb}. Following the notation of Section \ref{ssec:Uhb}, we set $\chk{\Uh{\mfa}}:=(\Uh{\mfa}{\vphantom{)}}^\bullet)^{\mathrm{cop}}$, where $\Uh{\mfa}{\vphantom{)}}^\bullet$ is the quantized enveloping algebra dual of $\Uh{\mfa}$. 
 Given this data, there exists a unique topological Hopf algebra $D(\Uh{\mfa})$ over $\C[\![\hbar]\!]$ with the following properties:
\begin{enumerate}\setlength{\itemsep}{3pt}
\item\label{DUha:1} There are embeddings of topological Hopf algebras 
\begin{equation*}
\imath^{\scriptscriptstyle{+}}: \!\Uh{\mfa}\into D(\Uh{\mfa})\quad \text{ and }\quad \imath^{\scriptscriptstyle{-}}\!: \chk{\Uh{\mfa}}\into D(\Uh{\mfa}).
\end{equation*}
\item\label{DUha:2} Let $m$ be the product on $D(\Uh{\mfa})$. Then the composition $m\circ (\imath^{\scriptscriptstyle{-}}\otimes \imath^{\scriptscriptstyle{+}})$ is an  isomorphism of $\C[\![\hbar]\!]$-modules:
\begin{equation*}
m\circ (\imath^{\scriptscriptstyle{-}}\otimes \imath^{\scriptscriptstyle{+}}):  \chk{\Uh{\mfa}}\otimes \Uh{\mfa}\iso D(\Uh{\mfa}).
\end{equation*}

\item\label{R:can} The canonical element 
\begin{equation*}
R\in \Uh{\mfa}\otimes \Uh{\mfa}{\vphantom{)}}^\ast\subset \Uh{\mfa}\otimes \chk{\Uh{\mfa}}\cong \imath^{\scriptscriptstyle{+}}(\Uh{\mfa})\otimes \imath^{\scriptscriptstyle{-}}(\chk{\Uh{\mfa}})\subset D(\Uh{\mfa})^{\otimes 2}
\end{equation*}
 defines a quasitriangular structure on $D(\Uh{\mfa})$. That is, one has
\begin{gather*}
\Delta^{\mathrm{op}}(x)=R \Delta(x)   R^{-1} \quad \forall\; x\in D(\Uh{\mfa}),\\
\Delta\otimes \id ( R ) =R_{13}R_{23} \quad \text{ and }\quad \id\otimes \Delta (R)=R_{13}R_{12}.
\end{gather*}
\end{enumerate}
The uniquely defined topological Hopf algebra $D(\Uh{\mfa})$ is called the \textit{quantum double} of $\Uh{\mfa}$, and the above characterization implies that it provides a quantization of the Lie bialgebra double of $\mfa$. It can be explicitly realized on the space $\chk{\Uh{\mfa}}\otimes \Uh{\mfa}$ as the double cross product Hopf algebra 
\begin{equation*}
D(\Uh{\mfa})=\chk{\Uh{\mfa}}\bowtie \Uh{\mfa},
\end{equation*}
with respect to the left and right coadjoint actions $\rhd$ and $\lhd$  of $\Uh{\mfa}$ on $\chk{\Uh{\mfa}}$ and $\chk{\Uh{\mfa}}$ on $\Uh{\mfa}$, respectively. We refer the reader to \cite{Andrea-Valerio-18}*{\S A} for the relevant details, adapted to the setting of quantized enveloping algebras.

Now let us return to the particular case $\mfa=\mfb\subset \mfg$. Our present goal is to recall the identification of the quantum double $D(\Uh{\mfb})$ with the $\C[\![\hbar]\!]$-algebra $\Uhg\otimes \msS(\mfh)[\![\hbar]\!]$.  Following Section \ref{ssec:D(b)}, we will denote the natural inclusion of $\msS(\mfh)[\![\hbar]\!]$ into $\Uhg\otimes \msS(\mfh)[\![\hbar]\!]$  by $\zeta$. That is, one has
\begin{equation*}
\zeta(h):=1\otimes h \quad \forall\quad  h\in \msS(\mfh)[\![\hbar]\!]. 
\end{equation*}
We shall need the following preliminary lemma, whose proof is straightforward.
\begin{lemma}\label{L:Uhg(x)S}
$\Uhg\otimes \msS(\mfh)[\![\hbar]\!]$ is isomorphic to the unital, associative $\C[\![\hbar]\!]$-algebra topologically generated by $\{h_i^\pm, X_i^\pm\}_{i\in \mbI}$, subject to the relations
\begin{gather*}
[h_i^\pm,h_j^\mp]=0=[h_i^\pm,h_j^\pm],\\
[h_i^\eps,X_j^\pm]=\pm (\alpha_i,\alpha_j)X_j^\pm,\\
[X_i^+,X_j^-]=\delta_{ij}\frac{q^{h_i^+}-q^{-h_i^-}}{q_i-q_i^{-1}},\\
\sum_{b=0}^{1-a_{ij}}(-1)^b \sbinom{1-a_{ij}}{b}_{q_i} (X_i^\pm)^b X_j^\pm (X_i^\pm)^{1-a_{ij}-b}=0,
\end{gather*}
where $\eps$ takes value $+$ or $-$ and in the last relation we assume that $i\neq j$. Explicitly, an isomorphism from this algebra into $\Uhg\otimes \msS(\mfh)[\![\hbar]\!]$ is given by
\begin{equation*}
h_i^\pm\mapsto h_i\pm \zeta(h_i), \quad X_i^+\mapsto q^{\zeta(h_i)/2}E_i,\quad X_i^-\mapsto q^{\zeta(h_i)/2} F_i  \quad \forall \; i\in \mbI.
\end{equation*}
\end{lemma}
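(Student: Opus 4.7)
The plan is to construct mutually inverse $\C[\![\hbar]\!]$-algebra homomorphisms between the abstractly presented algebra $\msA$ (topologically generated by $\{h_i^\pm,X_i^\pm\}_{i\in\mbI}$ subject to the listed relations) and $\Uhg\otimes\msS(\mfh)[\![\hbar]\!]$. The forward map $\Phi$ will be the assignment made explicit in the statement; for the inverse $\Psi$, I would exploit the universal property of the tensor product by first defining compatible homomorphisms out of each tensor factor separately and then combining them.

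The crucial preliminary observation, used repeatedly, is that the elements $\msZ_i:=(h_i^+-h_i^-)/2\in\msA$ are central in $\msA$. This follows from two facts encoded in the presentation: all $h_j^\pm$ mutually commute, and the relation $[h_i^\eps,X_j^\pm]=\pm(\alpha_i,\alpha_j)X_j^\pm$ depends only on the sign on $X_j^\pm$, not on $\eps$. Dually, $\zeta(h_i)=1\otimes h_i$ is central in $\Uhg\otimes\msS(\mfh)[\![\hbar]\!]$. Both centrality statements imply that the exponentials $q^{c\msZ_i}$ and $q^{c\zeta(h_i)}$ are central in their respective algebras and so may be slid freely through any product.

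With these observations, verifying that $\Phi$ respects the defining relations of $\msA$ reduces to a direct substitution. The Cartan commutation and Serre relations transfer immediately from $\Uhg$ after pulling out the appropriate central power of $q^{\zeta(h_i)/2}$, and the exchange relation $[X_i^+,X_j^-]=\delta_{ij}(q^{h_i^+}-q^{-h_i^-})/(q_i-q_i^{-1})$ follows from the $\Uhg$ relation $[E_i,F_j]=\delta_{ij}(q^{h_i}-q^{-h_i})/(q_i-q_i^{-1})$ by multiplying both sides by the central element $q^{\zeta(h_i)}$, which absorbs into the exponents to give $q^{h_i+\zeta(h_i)}-q^{-h_i+\zeta(h_i)}=q^{h_i^+}-q^{-h_i^-}$. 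For the inverse, I would set $\Psi_1:\Uhg\to\msA$ on the topological generators by $h_i\mapsto(h_i^++h_i^-)/2$, $E_i\mapsto q^{-\msZ_i/2}X_i^+$, $F_i\mapsto q^{-\msZ_i/2}X_i^-$, and $\Psi_2:\msS(\mfh)[\![\hbar]\!]\to\msA$ by $h_i\mapsto\msZ_i$. The Drinfeld--Jimbo relations are checked by the same centrality argument in reverse; the images of $\Psi_1$ and $\Psi_2$ commute because $\Psi_2$ lands inside the central subalgebra generated by the $\msZ_i$, so they induce a well-defined map $\Psi$ on the tensor product. A short check on topological generators then confirms $\Phi\circ\Psi=\id$ and $\Psi\circ\Phi=\id$.

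There is no substantive obstacle in this argument; the entire proof is a routine verification. The only point requiring any care is the correct tracking of the central twists $q^{\pm\zeta(h_i)/2}$ and $q^{\pm\msZ_i/2}$, particularly in recognizing that the asymmetric right-hand side of the exchange relation is nothing more than the standard $\Uhg$ commutator $[E_i,F_i]=(q^{h_i}-q^{-h_i})/(q_i-q_i^{-1})$ twisted by the central element $q^{\zeta(h_i)}$.
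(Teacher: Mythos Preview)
Your proposal is correct; the paper itself omits the proof entirely, declaring it ``straightforward,'' and what you have written is precisely the routine verification the authors had in mind. The key observations you isolate---centrality of $\msZ_i=(h_i^+-h_i^-)/2$ in $\msA$ and of $\zeta(h_i)$ in $\Uhg\otimes\msS(\mfh)[\![\hbar]\!]$, and the identification of the asymmetric exchange relation as the standard $[E_i,F_i]$ relation twisted by $q^{\zeta(h_i)}$---are exactly the ingredients that make the check go through.
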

Henceforth, we shall assume the above realization of $\Uhg\otimes \msS(\mfh)[\![\hbar]\!]$ without further mention of the underlying isomorphism. With this in mind, the following well-known result, originally due to Drinfeld \cite{DrQG}*{\S13}, provides the desired identification of $D(\Uh{\mfb})$ with $\Uhg\otimes \msS(\mfh)[\![\hbar]\!]$; see also \cite{CPBook}*{\S8.3} and \cite{Rosso89}*{\S II} (for $\mfg=\mfsl_{n+1}$). 
\begin{theorem}\label{T:dbl-Uhg(x)S}
There is an isomorphism of $\C[\![\hbar]\!]$-algebras
\begin{equation*}
\Uppsi: \Uhg\otimes \msS(\mfh)[\![\hbar]\!]\iso D(\Uh{\mfb})
\end{equation*}
uniquely determined by the requirement that, for each $i\in \mbI$, one has
\begin{gather*}
\Uppsi(h_i^+)=h_i, \quad \Uppsi(X_i^+)=E_i,\\
\Uppsi(h_i^-)=2\sum_{j\in \mbI} a_{ji}\upxi_j \quad \text{ and }\quad  \Uppsi(X_i^-)=\frac{\hbar}{q_i-q_i^{-1}}\eta_i.
\end{gather*}
\end{theorem}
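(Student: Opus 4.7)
The plan is to define $\Uppsi$ via its prescribed action on generators and then prove it is an isomorphism by reducing to the semiclassical limit. The first task is to verify that the assignment extends to a well-defined $\C[\![\hbar]\!]$-algebra homomorphism from the presentation of $\Uhg\otimes \msS(\mfh)[\![\hbar]\!]$ given in Lemma \ref{L:Uhg(x)S} into $D(\Uh{\mfb})$. Since the ``plus'' generators $h_i^+,X_i^+$ are sent into $\imath^{\scriptscriptstyle{+}}(\Uh{\mfb})$ and the ``minus'' generators $h_i^-,X_i^-$ into $\imath^{\scriptscriptstyle{-}}(\chk{\Uh{\mfb}})$, the purely plus and purely minus relations decouple and can be checked inside $\Uh{\mfb}$ and $\chk{\Uh{\mfb}}$ separately. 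The former are immediate from Definition \ref{D:Uhg}, while the latter amount to verifying that the normalized functionals $\upxi_j$ and $\frac{\hbar}{q_i - q_i^{-1}}\eta_i$ satisfy the prescribed commutation and quantum Serre relations in $\chk{\Uh{\mfb}}$. This is a direct computation using their defining formulas in $\Uh{\mfb}{\vphantom{)}}^\bullet$, together with the explicit coproduct and counit of $\Uh{\mfb}$ and the coopposite Hopf algebra structure.

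The cross relations among plus and minus generators encode the double cross product structure $D(\Uh{\mfb}) = \chk{\Uh{\mfb}}\bowtie \Uh{\mfb}$ and are proved by computing the relevant left and right coadjoint actions on generators. The key identity to verify is
\[
[X_i^+, X_j^-] = \delta_{ij}\frac{q^{h_i^+} - q^{-h_i^-}}{q_i - q_i^{-1}},
\]
which should be recovered by unpacking the product $E_i\cdot\eta_j - \eta_j\cdot E_i$ inside $D(\Uh{\mfb})$ using the explicit coproduct of $E_i$ and the fact that $\eta_j$ is, up to normalization, dual to $E_j$ with respect to the Hopf pairing. The image $\Uppsi(h_i^-) = 2\sum_{j}a_{ji}\upxi_j$ is engineered precisely so that the exponentiated Cartan element $q^{-h_i^-}$ produced by this cross relation matches the group-like factor appearing in $\Delta(\eta_i)$, which records the duality between the Cartan torus and its weight lattice counterpart.

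Once $\Uppsi$ is established as a $\C[\![\hbar]\!]$-algebra homomorphism, it remains to show bijectivity. Both source and target are topologically free over $\C[\![\hbar]\!]$: the former obviously, and the latter by property \eqref{DUha:2} combined with topological freeness of $\Uh{\mfb}$ and $\chk{\Uh{\mfb}}$. The semiclassical limit of $\Uppsi$ is then a $\C$-algebra map $U(\mfg)\otimes \msS(\mfh) \to U(D(\mfb^+))$, and by Proposition \ref{P:b-dbl} the target is naturally isomorphic to $U(\mfg\oplus\mfh)\cong U(\mfg)\otimes \msS(\mfh)$. Unpacking the images of generators, one verifies that $\bar{\Uppsi}$ coincides with this standard algebra isomorphism; in particular $h_i^+\mapsto h_i$ and $h_i^-\mapsto -\zeta(h_i)+h_i$ recover the splitting $\mfg\oplus\mfh=(\mfg\oplus\zeta(\mfh))\cap(\mfb^+\oplus\mfb^-)$ of Section \ref{ssec:D(b)}. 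Invoking properties \ref{L1} and \ref{L2}, one concludes that $\Uppsi$ itself is a $\C[\![\hbar]\!]$-module isomorphism, hence a $\C[\![\hbar]\!]$-algebra isomorphism.

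The main obstacle I anticipate is the verification of the cross relation $[X_i^+,X_j^-]$. This requires an explicit unpacking of the multiplication in the double cross product in terms of the coadjoint actions, together with a careful accounting of normalization constants relating the Drinfeld--Jimbo generators $E_i,F_i$ to the topologically dual functionals $\upxi_j,\eta_j$. Once this linchpin identity is handled, the remaining relations follow from the Hopf algebra structure of $\Uh{\mfb}$ and duality, and the semiclassical passage is routine.
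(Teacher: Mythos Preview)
The paper does not prove this theorem. It is stated as a well-known result originally due to Drinfeld, with citations to \cite{DrQG}*{\S13}, \cite{CPBook}*{\S8.3}, and \cite{Rosso89}*{\S II}; no argument is given beyond the statement and the subsequent description of the induced Hopf structure.

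Your plan is the standard one and is sound in outline: verify relations to obtain a homomorphism, then invoke \ref{L1} and \ref{L2} on the semiclassical limit. Two remarks. First, what you call ``a direct computation'' for the relations inside $\chk{\Uh{\mfb}}$ (notably the quantum Serre relations among the $\eta_i$) and the cross relation $[E_i,\eta_j]$ in the double cross product is in fact the entire substance of the theorem; these verifications require explicit control of the Hopf pairing between $\Uh{\mfb}$ and $\Uh{\mfb}^\prime$ and of the product in $\chk{\Uh{\mfb}}\bowtie\Uh{\mfb}$, and are what the cited references actually carry out. You have correctly identified this as the crux but have not resolved it. Second, your description of the semiclassical limit is slightly off: under the Manin triple identification of Section \ref{ssec:D(b)} the element $h_i\in\mfh\subset\mfb^+$ corresponds to $h_i+\zeta(h_i)\in\mfg\oplus\mfh$, not to $h_i$ alone, so the statement ``$h_i^+\mapsto h_i$'' needs to be interpreted with care before concluding that $\bar\Uppsi$ is the natural isomorphism $U(\mfg\oplus\mfh)\to U(D(\mfb^+))$. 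Once the embeddings of $\mfb^\pm$ into $\mfg\oplus\mfh$ are tracked consistently, the limit argument goes through.
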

With respect to the above identification, the coproduct $\Delta$, antipode $S$, and counit $\veps$ of $D(\Uh{\mfb})$ are determined by the requirement that $\{h_i^\pm\}_{i\in \mbI}$ are primitive elements and that $X_i^\pm$ satisfy
\begin{gather*}
\Delta(X_i^+)=X_i^+\otimes q^{h_i^+} + 1\otimes X_i^+, \quad S(X_i^+)=-X_i^+ q^{-h_i^+}, \quad \veps(X_i^+)=0,\\
\Delta(X_i^-)=X_i^-\otimes 1 + q^{-h_i^-}\otimes X_i^-, \quad S(X_i^-)=-q^{h_i^-}X_i^-,\quad \veps(X_i^-)=0,
\end{gather*}
for all $i\in \mbI$. In particular, if $\upepsilon_\mfh=\varepsilon|_{\msS(\mfh)[\![\hbar]\!]}$, then the projection 
\begin{equation}\label{DUhb->Uhg}
\uppsi_\hbar:=\id_{\Uhg}\otimes \upepsilon_\mfh: D(\Uh{\mfb})\cong \Uhg\otimes \msS(\mfh)[\![\hbar]\!]\onto U_\hbar \mfg
\end{equation}
is an epimorphism of Hopf algebras which quantizes the Lie bialgebra surjection $\uppsi:D(\mfb^+)\onto \mfg$ defined below Proposition \ref{P:b-dbl}. 
\subsection{Automorphisms of $D(\Uh{\mfb})$}\label{ssec:DUhb-aut}
Before discussing in more detail the quasitriangularity of the quantum double $D(\Uh{\mfb})$, we apply the results of the above section to study certain automorphisms $D(\Uh{\mfb})$, beginning with the following corollary. 
\begin{corollary} \label{C:D-fixed}
For each $h\in \mfh$, the assignment 
\begin{equation*}
\upgamma_h^D(h_i^\pm)=h_i^\pm\pm \alpha_i(h), \quad \upgamma_h^D(X_i^\pm)=q^{\alpha_i(h)/2}X_i^\pm \quad \forall\quad i\in \mbI
\end{equation*}
uniquely extends to a $\C[\![\hbar]\!]$-algebra automorphism of $D(\Uh{\mfb})$. 
Moreover, the natural embedding $U_\hbar\mfg\into U_\hbar\mfg\otimes \msS(\mfh)[\![\hbar]\!]\cong D(\Uh{\mfb})$ identifies $\Uhg$ with the subalgebra fixed by all all automorphisms $\upgamma_h^D$:
\begin{equation*}
U_\hbar\mfg=\{x\in D(\Uh{\mfb}): \upgamma_h^D(x)=x\;\forall \; h\in \mfh\}.
\end{equation*}
\end{corollary}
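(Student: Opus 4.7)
The plan is to address the two assertions in sequence. First, I would verify that the proposed formulas preserve the defining relations of $D(\Uh{\mfb}) \cong \Uhg \otimes \msS(\mfh)[\![\hbar]\!]$ listed in Lemma \ref{L:Uhg(x)S}. The commutativity relations among the $h_i^\pm$, the adjoint-type relations $[h_i^\eps, X_j^\pm] = \pm(\alpha_i,\alpha_j)X_j^\pm$, and the $q$-commutator $[X_i^+, X_j^-]$ are each preserved essentially because the shifts on $h_i^\pm$ are by scalars and the rescalings of $X_j^\pm$ match on both sides: in the cross relation, the left-hand side acquires an overall factor of $q^{\alpha_i(h)}$ (when $i=j$), while the right-hand side becomes $\delta_{ij}(q^{h_i^+ + \alpha_i(h)} - q^{-h_i^- + \alpha_i(h)})/(q_i - q_i^{-1})$, which matches. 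The Serre relations survive because every term in the sum over $b$ picks up the same overall scalar $q^{((1-a_{ij})\alpha_i(h) + \alpha_j(h))/2}$, independent of $b$. Once $\upgamma_h^D$ is confirmed to be a well-defined algebra endomorphism, the identity $\upgamma_h^D \circ \upgamma_{-h}^D = \id$ on generators, and hence globally, shows it is an automorphism with inverse $\upgamma_{-h}^D$.

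Next, I would pull $\upgamma_h^D$ back through the isomorphism $\Uppsi$ from Theorem \ref{T:dbl-Uhg(x)S} to obtain an automorphism $\wt{\upgamma}_h$ of $\Uhg \otimes \msS(\mfh)[\![\hbar]\!]$. Using the formulas $h_i \otimes 1 = (h_i^+ + h_i^-)/2$, $\zeta(h_i) = (h_i^+ - h_i^-)/2$, $E_i \otimes 1 = q^{-\zeta(h_i)/2} X_i^+$, and $F_i \otimes 1 = q^{-\zeta(h_i)/2} X_i^-$ from Lemma \ref{L:Uhg(x)S}, a direct computation shows that $\wt{\upgamma}_h$ fixes each generator $h_i$, $E_i$, $F_i$ of $\Uhg \otimes 1$ while translating $\zeta(h_i)$ by the scalar $\alpha_i(h)$. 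In particular, the image of the natural embedding $\Uhg \into D(\Uh{\mfb})$ is contained in the fixed subalgebra, giving one inclusion.

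For the reverse inclusion, I would exploit that, since $\{\alpha_i\}_{i \in \mbI}$ is a basis of $\mfh^*$, the assignment $h \mapsto (\alpha_i(h))_{i \in \mbI}$ surjects onto $\C^\mbI$ as $h$ varies. Hence $\{\wt{\upgamma}_h\}_{h \in \mfh}$ realizes the full translation action of $\C^\mbI$ on $\msS(\mfh) \cong \C[\zeta(h_1), \ldots, \zeta(h_{|\mbI|})]$, and the problem reduces to the elementary fact that a polynomial invariant under all complex translations is constant. Writing an arbitrary $y \in \Uhg \otimes \msS(\mfh)[\![\hbar]\!] \cong (U(\mfg) \otimes \msS(\mfh))[\![\hbar]\!]$ in its $\hbar$-expansion $y = \sum_{k \geq 0} y_k \hbar^k$ with $y_k \in U(\mfg) \otimes \msS(\mfh)$, invariance of $y$ under all $\wt{\upgamma}_h$ forces each $y_k$ to be translation-invariant in its $\msS(\mfh)$ factor, whence $y_k \in U(\mfg) \otimes \C$ and $y \in \Uhg$. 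The main (and only modest) obstacle is this last topological step, which must be executed coefficient by coefficient in the $\hbar$-adic expansion; once the identifications from Lemma \ref{L:Uhg(x)S} are set up, it presents no genuine difficulty.
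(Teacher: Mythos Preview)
Your proposal is correct and arrives at the same key identification as the paper: the automorphism $\upgamma_h^D$ coincides with $\id_{\Uhg}\otimes \upgamma_h^{\msS}$, where $\upgamma_h^{\msS}$ is the translation automorphism $h_i\mapsto h_i+\alpha_i(h)$ of $\msS(\mfh)[\![\hbar]\!]$, and the fixed-point statement then reduces to the elementary fact that translation-invariant polynomials are constant.

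The only difference is the order of operations. The paper \emph{starts} on the tensor-product side by defining $\upgamma_h^D:=\id_{\Uhg}\otimes \upgamma_h^{\msS}$, which makes existence, invertibility, and the fixed-point characterization immediate; it then reads off the formulas on the generators $h_i^\pm,X_i^\pm$ via the isomorphism of Lemma~\ref{L:Uhg(x)S}. You instead verify the relations of Lemma~\ref{L:Uhg(x)S} directly for the given formulas, and only afterwards pull back through $\Uppsi$ to discover that the automorphism is of the form $\id\otimes\text{translation}$. Both routes are valid; the paper's is shorter because the relation check becomes unnecessary once the automorphism is exhibited as a tensor product of known automorphisms.
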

\begin{proof}
For each $h\in \mfh$, the assignment $h_i\mapsto h_i+\alpha_i(h)$, for all $i\in \mbI$, uniquely extends to a $\C[\![\hbar]\!]$-algebra automorphism $\upgamma_h^\msS$ of $\msS(\mfh)[\![\hbar]\!]$. Moreover, it is easy to see that an element $x\in \msS(\mfh)[\![\hbar]\!]$ is fixed by all $\upgamma_h^\msS$ precisely when $x\in \C[\![\hbar]\!]$. It follows readily that the subalgebra of  $U_\hbar\mfg\otimes \msS(\mfh)[\![\hbar]\!]$ fixed by all automorphisms of the form $\upgamma_h^D:=\id_{\Uhg}\otimes \upgamma_h^\msS$ coincides with $\Uhg$. Moreover, by working through the isomorphism of Lemma \ref{L:Uhg(x)S}, one sees that $\upgamma_h^D$ is indeed given as in the statement of the corollary.
\end{proof}
Another consequence of Theorem \ref{T:dbl-Uhg(x)S} is that the Chevalley involution $\omega_\hbar$ of $\Uhg$, defined in Section \ref{ssec:Uhb}, extends to an involutive automorphism $\dot{\omega}_\hbar$ of $D(\Uh{\mfb})\cong \Uhg\otimes \msS(\mfh)[\![\hbar]\!]$ by setting $\dot{\omega}_\hbar=\omega_\hbar \otimes \id_{\msS(\mfh)[\![\hbar]\!]}$. In terms of the generators of Lemma \ref{L:Uhg(x)S}, one has 
\begin{equation*}
\dot{\omega}_\hbar(h_i^\pm)=-h_i^\mp \quad \text{ and }\quad \dot{\omega}_\hbar(X_i^\pm)=-X_i^\mp \quad \forall\; i\in \mbI.
\end{equation*}
Note that $\dot{\omega}_\hbar$ quantizes the Chevalley involution $\dot{\omega}$ on the Lie bialgebra double $D(\mfb^+)$ introduced in Section \ref{ssec:Db-Chev}. In keeping with the notation of that section, we shall henceforth write $\omega_\hbar$ for $\dot{\omega}_\hbar$.

The involution $\omega_\hbar$ of $D(\Uh{\mfb})$ is also a coalgebra anti-automorphism intertwining $S$ and $S^{-1}$. More precisely, one has the following lemma, which follows easily from the formulas above \eqref{DUhb->Uhg}. 
\begin{lemma}\label{L:Chev-D}
The Chevalley involution $\omega_\hbar$ is an isomorphism of topological Hopf algebras $D(\Uh{\mfb})\iso D(\Uh{\mfb})^{\mathrm{cop}}$. In particular, it satisfies
\begin{equation*}
(\omega_\hbar\otimes \omega_\hbar)\circ \Delta^{\mathrm{op}}= \Delta \circ \omega_\hbar, \quad \omega_\hbar \circ S^{-1}=S\circ \omega_\hbar, \quad \veps \circ \omega_\hbar = \veps. 
\end{equation*}
\end{lemma}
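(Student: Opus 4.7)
The plan is to verify each of the three identities directly on the topological generators $\{h_i^\pm, X_i^\pm\}_{i\in\mbI}$ provided by Lemma \ref{L:Uhg(x)S}, using the explicit formulas for $\Delta$, $\veps$, and $S$ recorded immediately below Theorem \ref{T:dbl-Uhg(x)S}, together with the defining relations $\omega_\hbar(h_i^\pm) = -h_i^\mp$ and $\omega_\hbar(X_i^\pm) = -X_i^\mp$ spelled out above the statement. Since $D(\Uh{\mfb})$ is topologically generated by these elements, and every map appearing in the three identities is $\C[\![\hbar]\!]$-linear and continuous, this reduction is legitimate provided the two sides of each identity define algebra homomorphisms of the same source and target. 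This is automatic: $\Delta \circ \omega_\hbar$ is the composition of two algebra homomorphisms, and $(\omega_\hbar \otimes \omega_\hbar) \circ \Delta^{\mathrm{op}}$ is likewise an algebra homomorphism into $D(\Uh{\mfb})^{\otimes 2}$ (endowed with its standard product), since $\Delta^{\mathrm{op}}$ is one and $\omega_\hbar$ is an algebra involution; the analogous observation applies to $\veps \circ \omega_\hbar$ and $\veps$.

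The verification of $\veps \circ \omega_\hbar = \veps$ is immediate: both sides vanish on each of $h_i^\pm$ and $X_i^\pm$. For the coalgebra identity, on the primitive generators $h_i^\pm$ one has
\begin{equation*}
\Delta(\omega_\hbar(h_i^\pm)) = -h_i^\mp\otimes 1 - 1\otimes h_i^\mp = (\omega_\hbar\otimes\omega_\hbar)\bigl(1\otimes h_i^\pm + h_i^\pm\otimes 1\bigr) = (\omega_\hbar\otimes\omega_\hbar)\Delta^{\mathrm{op}}(h_i^\pm),
\end{equation*}
and for $X_i^+$ a short computation using $\omega_\hbar(q^{h_i^+}) = q^{-h_i^-}$ gives
\begin{equation*}
\Delta(\omega_\hbar(X_i^+)) = -X_i^-\otimes 1 - q^{-h_i^-}\otimes X_i^- = (\omega_\hbar\otimes\omega_\hbar)\bigl(q^{h_i^+}\otimes X_i^+ + X_i^+\otimes 1\bigr),
\end{equation*}
with the same check for $X_i^-$ obtained by interchanging the roles of $+$ and $-$.

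Having established that $\omega_\hbar$ is a bialgebra morphism $D(\Uh{\mfb}) \to D(\Uh{\mfb})^{\mathrm{cop}}$, the antipode identity is a formal consequence: any bialgebra morphism between Hopf algebras automatically commutes with the antipodes, so $\omega_\hbar$ intertwines the antipode $S$ of $D(\Uh{\mfb})$ with the antipode $S^{-1}$ of $D(\Uh{\mfb})^{\mathrm{cop}}$, yielding $\omega_\hbar \circ S^{-1} = S \circ \omega_\hbar$. There is no real obstacle in this proof; the only thing to keep in mind is the bookkeeping of source/target products when reducing the coalgebra identity to generators, and the sign cancellations arising from $\omega_\hbar(X_i^\pm) = -X_i^\mp$ combined with the flip $\mathrm{op}$ on the coproduct side.
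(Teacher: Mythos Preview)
Your proof is correct and is precisely the argument the paper has in mind: the lemma is stated to ``follow easily from the formulas above \eqref{DUhb->Uhg}'', i.e., from the explicit expressions for $\Delta$, $S$, and $\veps$ on the generators $h_i^\pm$, $X_i^\pm$, which is exactly the check you perform. Your use of the formal fact that a bialgebra map between Hopf algebras automatically intertwines antipodes is a clean way to deduce the antipode identity without a separate generator check.
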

%

\subsection{The universal $R$-matrix}\label{ssec:R}
By property \eqref{R:can} in the characterization of the quantum double recalled at the beginning of Section \ref{ssec:DUhb},  $D(\Uh{\mfb})$ is a quasitriangular Hopf algebra, with universal $R$-matrix given by the canonical element 
\begin{equation*}
R^D\in \Uh{\mfb} \otimes \Uh{\mfb}^\ast \subset \Uh{\mfb} \otimes \chk{\Uh{\mfb}} \cong U_\hbar \mfb^+ \otimes \Uh{\mfb^-}\subset D(\Uh{\mfb})^{\otimes 2}.
\end{equation*}
Consequently, $\Uhg$ is quasitriangular with universal $R$-matrix given by the image 
\begin{equation*}
R=(\uppsi_\hbar \otimes \uppsi_\hbar)(R^D),
\end{equation*}
where $\uppsi_\hbar$ is as in \eqref{DUhb->Uhg}. Both of these $R$-matrices have been computed explicitly and studied extensively; for instance, explicit factorizations of $R$ were obtained over thirty years ago in the work of Kirillov--Reshetikhin \cite{KR90} and Levendorskii--Soibelman \cite{LeSo90}; see also \cite{CPBook}*{\S8.3}. We shall not need this level of precision in the present paper. Rather, for our purposes it will be sufficient to note that $R^D$ quantizes the classical universal $r$-matrix $\scriptr^D\in D(\mfb^+)^{\otimes 2}$ defined in Proposition \ref{P:b-dbl}: 
\begin{equation}\label{R^D-limit}
\scriptr^D=\hbar^{-1}\left(R^D-1\right) \mod \hbar,
\end{equation}
and, in addition, admits a multiplicative decomposition consistent with the $\C[\![\hbar]\!]$-module isomorphism $\Uh{\mfb}\cong\Uh{\mfh}\otimes \Uh{\mfn}$ and the $\msQ_+$-grading on $\Uh{\mfn}$. Namely, one has
\begin{equation}\label{R^D-fac}
R^D= q^{\dot{\Omega}_\mfh}\cdot\sum_{\beta\in \msQ_+}R_\beta^+,
\end{equation}
where $R_\beta^+\in (\Uh{\mfn^+})_\beta \otimes (\Uh{\mfn^-})_{-\beta}$ with $R_0^+=1$, and $\dot{\Omega}_\mfh$ is the natural image of the element $\Omega_\mfh\in \mfh\otimes \mfh$ in  $\Uh{\mfb^+}\otimes \Uh{\mfb^-}\subset D(\Uh{\mfb})^{\otimes 2}$:
\begin{equation*}
\dot{\Omega}_\mfh=\sum_{i\in \mbI} (\varpi_i^\vee)^+ \otimes h_i^-\in \Uh{\mfb^+}\otimes \Uh{\mfb^-}\subset D(\Uh{\mfb})^{\otimes 2}.
\end{equation*}
We refer the reader to \cite{CPBook}*{\S8.3C}, for instance, for a proof of the above assertion. 
We now define auxiliary elements $\msR_\omega^\pm\in\Uh{\mfb^\pm}\otimes \Uh{\mfb^\pm}$ by setting
\begin{equation}
\begin{gathered}\label{R-omega}
\msR_\omega^+:=(\id\otimes \omega_\hbar)(R^D) \quad \text{ and }\quad \msR_\omega^-=(\id\otimes \omega_\hbar)((R^D_{21})^{-1}).
\end{gathered}
\end{equation}
%
%
In addition, we set $\mathsf{\Omega}_\mfh^+=-(\id \otimes \omega_\hbar)(\dot{\Omega}_\mfh)$ and $\mathsf{\Omega}_\mfh^-=-(\omega_\hbar\otimes \id)(\dot{\Omega}_\mfh)$. 
\begin{proposition}\label{P:R-omega}
The elements $\msR_\omega^+$ and $\msR_\omega^-$ have the following properties:
\begin{enumerate}[font=\upshape]
\item\label{R-omega:1} They admit multiplicative decompositions
\begin{equation*}
\msR_\omega^+= q^{-\mathsf{\Omega}^+_\mfh}\msR^+ \quad \text{ and }\quad  \msR_\omega^-=\msR^- q^{\mathsf{\Omega}^-_\mfh},
\end{equation*} 
where $\msR^\pm=\sum_{\beta \in \msQ_+}\msR_\beta^\pm \in (\Uh{\mfn^\pm})^{\otimes 2}$ with $\msR_\beta^\pm\in (\Uh{\mfn^\pm_{\pm \beta}})^{\otimes 2}$ and $\msR_0^\pm =1$. 
\item\label{R-omega:2}  They satisfy the following in $D(\Uh{\mfb})^{\otimes 3}$:
\begin{align*}
R_{12}^D (\msR_\omega^\pm)_{13} (\msR_\omega^\pm)_{23}&=(\msR_\omega^\pm)_{23}(\msR_\omega^\pm)_{13}R_{12}^D,\\
R_{12}^D (\msR_\omega^+)_{13} (\msR_\omega^-)_{23}&=(\msR_\omega^-)_{23}(\msR_\omega^+)_{13}R_{12}^D.
\end{align*}
\item\label{R-omega:3} They satisfy the Hopf algebraic relations 
\begin{gather*}
(\id\otimes \Delta)(\msR_\omega^\pm)=(\msR_\omega^\pm)_{12}(\msR_\omega^\pm)_{13},\\
(\id \otimes S)(\msR_\omega^\pm)=(\msR_\omega^\pm)^{-1}
\quad \text{ and }\quad (\id\otimes \veps)(\msR_\omega^\pm)=1.
\end{gather*}
\item\label{R-omega:4}  Their images under $(\id \otimes \upgamma_h^D)$ are given by 
\begin{equation*}
(\id \otimes \upgamma_h^D)(\msR^\pm_\omega)=q_1^{-h/2} \msR^\pm_\omega q_1^{-h/2} \quad \forall \; h\in \mfh.
\end{equation*}
\end{enumerate}
\end{proposition}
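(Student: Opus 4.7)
For \eqref{R-omega:1}, the plan is to apply $(\id\otimes\omega_\hbar)$ directly to \eqref{R^D-fac}. The Cartan factor $q^{\dot{\Omega}_\mfh}$ becomes $q^{-\mathsf{\Omega}_\mfh^+}$ by the identity $\omega_\hbar(h_i^-)=-h_i^+$ and the definition of $\mathsf{\Omega}_\mfh^+$. Since $\omega_\hbar$ restricts to a weight-reversing algebra isomorphism $\Uh{\mfn^-}\iso \Uh{\mfn^+}$, each summand $R_\beta^+\in (\Uh{\mfn^+})_\beta\otimes (\Uh{\mfn^-})_{-\beta}$ is mapped into $(\Uh{\mfn^+_\beta})^{\otimes 2}$, yielding $\msR^+$. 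For $\msR_\omega^-$, I would first swap the factors of $R^D$ to obtain $R^D_{21}=q^{(\dot{\Omega}_\mfh)_{21}}\sum_\beta (R_\beta^+)_{21}$, then invert, and finally apply $(\id\otimes\omega_\hbar)$; the symmetry of $\Omega_\mfh$ ensures that the Cartan factor comes out as $q^{\mathsf{\Omega}_\mfh^-}$ to the right of $\msR^-$.

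For \eqref{R-omega:3}, I would use the identities $\Delta\circ\omega_\hbar=(\omega_\hbar\otimes\omega_\hbar)\circ\Delta^{\mathrm{op}}$, $S\circ\omega_\hbar=\omega_\hbar\circ S^{-1}$, and $\veps\circ\omega_\hbar=\veps$ from Lemma \ref{L:Chev-D}, together with the hexagon axiom $(\id\otimes\Delta)(R^D)=R^D_{13}R^D_{12}$. For the coproduct identity, this gives
\begin{equation*}
(\id\otimes\Delta)(\msR_\omega^+)=(\id\otimes\omega_\hbar\otimes\omega_\hbar)(\id\otimes\Delta^{\mathrm{op}})(R^D)=(\id\otimes\omega_\hbar\otimes\omega_\hbar)(R^D_{12}R^D_{13})=(\msR_\omega^+)_{12}(\msR_\omega^+)_{13},
\end{equation*}
and the argument is analogous for $\msR_\omega^-$, using that $(R^D_{21})^{-1}$ is itself a universal $R$-matrix. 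The antipode and counit identities reduce to $(\id\otimes S)(R^D)=(R^D)^{-1}$ and $(\id\otimes\veps)(R^D)=1$, respectively.

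For \eqref{R-omega:2}, I would first establish the companion hexagon identity $(\Delta\otimes\id)(\msR_\omega^\pm)=(\msR_\omega^\pm)_{13}(\msR_\omega^\pm)_{23}$ by a computation parallel to \eqref{R-omega:3}. The pure $+$ and pure $-$ cases then follow from the intertwining axiom $R^D_{12}(\Delta\otimes\id)(Y)=(\Delta^{\mathrm{op}}\otimes\id)(Y)R^D_{12}$ with $Y=\msR_\omega^\pm$, after observing that $(\Delta^{\mathrm{op}}\otimes\id)(\msR_\omega^\pm)=(\msR_\omega^\pm)_{23}(\msR_\omega^\pm)_{13}$ by applying the transposition in the first two slots to the companion hexagon. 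The main obstacle is the mixed relation, for which I would first prove
\begin{equation*}
R^D_{12}R^D_{13}((R^D_{21})^{-1})_{23}=((R^D_{21})^{-1})_{23}R^D_{13}R^D_{12}
\end{equation*}
in $D(\Uh{\mfb})^{\otimes 3}$. Applying the transposition $(2\,3)$ to the QYBE for $R^D$ yields $R^D_{13}R^D_{12}(R^D_{21})_{23}=(R^D_{21})_{23}R^D_{12}R^D_{13}$, which after conjugating on both sides by $((R^D_{21})^{-1})_{23}$ gives the stated identity. Applying $(\id\otimes\id\otimes\omega_\hbar)$ then produces the mixed relation of \eqref{R-omega:2}.

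Finally, for \eqref{R-omega:4}, I would work with the factorizations from \eqref{R-omega:1}. The shifts $\upgamma_h^D(h_i^\pm)=h_i^\pm\pm\alpha_i(h)$ introduce an additional Cartan factor on the first tensor slot of $q^{\mathsf{\Omega}_\mfh^\pm}$, while each weight-$\beta$ summand $\msR_\beta^\pm$ is scaled by $q^{\beta(h)/2}$. A direct computation, commuting this extra Cartan factor past $\msR_\beta^\pm$ using the $\msQ$-weights of the latter, reassembles the result into the claimed conjugation by $q_1^{-h/2}$.
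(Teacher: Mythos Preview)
Your proposal is correct and follows essentially the same approach as the paper: Part \eqref{R-omega:1} from the factorization \eqref{R^D-fac}, Parts \eqref{R-omega:2} and \eqref{R-omega:3} from the QYBE and quasitriangularity axioms for $R^D$ combined with Lemma \ref{L:Chev-D}, and Part \eqref{R-omega:4} from the explicit action of $\upgamma_h^D$ on the factors in \eqref{R-omega:1}. One minor slip: in Part \eqref{R-omega:3} the antipode identity you need is $(\id\otimes S^{-1})(R^D)=(R^D)^{-1}$ rather than $(\id\otimes S)(R^D)=(R^D)^{-1}$, but since you invoke $S\circ\omega_\hbar=\omega_\hbar\circ S^{-1}$ your computation already routes through the correct identity.
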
 
\begin{proof}
The factorizations of Part \eqref{R-omega:1} follow from the multiplicative decomposition \eqref{R^D-fac} for $R^D$, using that $\omega_\hbar((\Uh{\mfn^\pm})_{\pm\beta})=(\Uh{\mfn^\mp})_{\mp\beta}$ for all $\beta \in \msQ_\pm$. Parts \eqref{R-omega:2} and \eqref{R-omega:3} both follow from standard arguments; see \cite{KS-book}*{Prop.~8.27}, for example. For the sake of completeness, we note that the former is a consequence of the fact that $R^D$ necessarily satisfies the quantum-Yang Baxter equation
%
\begin{equation*}
R^D_{12}R^D_{13}R^D_{23}=R^D_{23}R^D_{13}R^D_{12},
\end{equation*}
while Part \eqref{R-omega:3} follows from Lemma \ref{L:Chev-D} and that $R^D$ satisfies
\begin{gather*}
(\Delta\otimes\id)(R^D)=R^D_{13}R^D_{23}, \quad (\id\otimes \Delta)(R^D)=R^D_{13}R^D_{12},\\
(S\otimes \id)(R^D)=(R^D)^{-1}=(\id \otimes S^{-1})(R^D), \quad (\veps\otimes \id)(R^D)=1=(\id\otimes \veps)(R^D).
\end{gather*}
Consider now Part \eqref{R-omega:4}. The definition of $\upgamma_h^D$ (see Corollary \ref{C:D-fixed}) implies that $\upgamma_h^D|_{\Uh{\mfn^\pm_\beta}}=q^{\beta(h)/2}\id_{\Uh{\mfn^\pm_\beta}}$ for each $\beta\in \msQ_\pm$. Letting $\msR^\pm$ be as in Part \eqref{R-omega:1}, we then have
\begin{gather*}
(\id \otimes \upgamma_h^D)(\msR^\pm)=\sum_{\beta \in \msQ_+}  q^{\beta(h)/2}_2\msR_\beta^\pm =\sum_{\beta \in \msQ_+}  \mathrm{Ad}(q^{\pm h/2}_2)\msR_\beta^\pm  =q_2^{\pm h/2} \msR^\pm q_2^{\mp h/2},\\
(\id \otimes \upgamma_h^D)(q^{\mp\mathsf{\Omega}^\pm_\mfh})=q^{-\sum_{i\in \mbI} \alpha_i(h)(\varpi_i^\vee)^\pm}_1q^{\mp\mathsf{\Omega}^\pm_\mfh}=q^{-h}_1q^{\mp\mathsf{\Omega}^\pm_\mfh}=q^{\mp\mathsf{\Omega}^\pm_\mfh}q^{-h}_1.
\end{gather*}
Since $q_2^{\pm h/2} \msR^\pm q_2^{\mp h/2}=q_1^{\pm h/2} \msR^\pm q_1^{\mp h/2}$, it follows by Part \eqref{R-omega:1} that $(\id \otimes \upgamma_h^D)(\msR^\pm_\omega)=q_1^{-h/2} \msR^\pm_\omega q_1^{-h/2}$. \qedhere 
\end{proof}

\subsection{Finite-dimensional representations}\label{ssec:Uhg-fdreps}
A representation $\mcV$ of $\Uhg$ is said to be \textit{finite-dimensional} if it is a free $\C[\![\hbar]\!]$-module of finite rank, and thus realized on a space of the form $V[\![\hbar]\!]$, where $V$ is a finite-dimensional complex vector space. If $\mcV=V[\![\hbar]\!]$ is such a $\Uhg$-module with action given by 
\begin{equation*}
\pi_\hbar: \Uhg\to \End_{\C[\![\hbar]\!]}(\mcV)\cong \End(V)[\![\hbar]\!],
\end{equation*}
then the semiclassical limit $\pi=\bar{\pi}_\hbar:\Uhg\to \End(V)$ equips $V$ with the structure of a finite-dimensional $\mfg$-module. 

The (exact, but non-abelian) category of finite-dimensional representations of $\Uhg$ is well-understood, having been characterized by Tanisaki \cite{Tanisaki91} and Drinfeld \cite{Dr-almost}*{\S4}; see also \cite{Rosso87}. In particular, every finite-dimensional representation of $\Uhg$ decomposes as a direct sum of indecomposable representations, each of which is uniquely characterized up to isomorphism by its semi-classical limit, which is a finite-dimensional irreducible representation of $\mfg$.

More generally, every finite-dimensional representation $V[\![\hbar]\!]$ of $\Uhg$ is uniquely determined, up to isomorphism, by its semiclassical limit $V$. Indeed, the action of $\Uhg$ on $V[\![\hbar]\!]$ can be recovered from $\pi:U(\mfg)\to \End(V)$, up to equivalence, as  the composite 
\begin{equation*}
\Uhg \xrightarrow{\upvarphi} U(\mfg)[\![\hbar]\!]\xrightarrow{\pi} \End(V)[\![\hbar]\!]\cong \End_{\C[\![\hbar]\!]}V[\![\hbar]\!],
\end{equation*}
where $\upvarphi: \Uhg\iso U(\mfg)[\![\hbar]\!]$ is a fixed $\C[\![\hbar]\!]$-algebra isomorphism with semiclassical limit $\bar{\upvarphi}=\id_{U(\mfg)}$,  which exists (and is unique up to conjugation by an element of $1+\hbar U(\mfg)[\![\hbar]\!]$) by the rigidity of semisimple Lie algebras; see \cite{Dr-almost}*{\S4}, \cite{KasBook95}*{\S XVIII.2} and \cite{Ger-deform}. Moreover, this correspondence respects $\mfh$-weight spaces. That is,  $\mcV$ decomposes as $\mcV=\bigoplus_{\mu}V_\mu[\![\hbar]\!]$ where $V_\mu$ is the $\mu\in \mfh^\ast$ weight space of $V$, and one has the equality
\begin{equation*}
V_\mu[\![\hbar]\!]=\{v\in \mcV: h\cdot v= \mu(h)\cdot v \; \forall \; h\in \mfh\subset \Uhg\} \quad \forall\; \mu\in \mfh^\ast.
\end{equation*}
This follows from \cite{Dr-almost}*{Prop.~4.3}, which dictates that $\upvarphi$ may be chosen so that $\upvarphi|_{\mfh}=\id_\mfh$. 
 
\section{The \texorpdfstring{$R$}{R}-matrix construction}\label{sec:URg}

\subsection{The $R$-matrix algebra $\UR{\mfg}$}\label{ssec:URg-def}
As in Section \ref{sec:g-r}, we let $V$ be an arbitrary faithful representation of the finite-dimensional simple Lie algebra $\mfg$, with associated algebra homomorphism $\pi:U(\mfg)\to \End(V)$. Then, by the results recalled in Section \ref{ssec:Uhg-fdreps}, there is a unique, up to isomorphism,  $\Uhg$-module structure on $V[\![\hbar]\!]$ deforming the $\mfg$-module structure on $V$. Moreover, the associated action can be chosen so as to be given by an algebra homomorphism
\begin{equation*}
\pi_\hbar: \Uhg\to \End_{\C[\![\hbar]\!]}(V[\![\hbar]\!])\cong \End(V)[\![\hbar]\!]
\end{equation*}
satisfying $\pi_\hbar|_{\mfh}=\pi|_{\mfh}$ and $\pi_\hbar(\Uhg)\subset \pi(U(\mfg))[\![\hbar]\!]$. We henceforth fix $\pi_\hbar$ with these properties, keeping in mind that any finite-dimensional representation of $\Uhg$ can be realized in this way.
In addition, we extend $\pi_\hbar$ to a representation of $D(\Uh{\mfb})$ by pulling back via the surjection $\uppsi_\hbar$ defined in \eqref{DUhb->Uhg}, and set 
\begin{equation*}
\mrR_\pi=(\pi_\hbar\otimes \pi_\hbar)(R)=(\pi_\hbar\otimes \pi_\hbar)(R^D). 
\end{equation*}
Since $\End_{\C[\![\hbar]\!]}(V[\![\hbar]\!])$ is torsion free and $R^D$ is equal to $1$ modulo $\hbar$, we may define $\dot{\mrR}_\pi\in \End_{\C[\![\hbar]\!]}(V[\![\hbar]\!])^{\otimes 2}\cong \End(V)^{\otimes 2} [\![\hbar]\!]$ by
\begin{equation*}
\dot{\mrR}_\pi=\hbar^{-1}\left(\mrR_\pi-1\right).
\end{equation*}
In particular, this definition and \eqref{R^D-limit} imply that $\dot{\mrR}_\pi$ reduces to $\scriptr_\pi$ modulo $\hbar$. 

With the above notation at our disposal, we are now prepared to introduce the main object of study of the remainder of this article.
\begin{definition}\label{D:URg}
Let $\UR{\mfg}$ denote the unital, associative $\C[\![\hbar]\!]$-algebra topologically generated by $\{t_{ij}^\pm\}_{i,j\in \mcI}$, subject to the relations
\begin{gather}
\mrT_\lambda^\pm=0 \quad \forall \; \lambda \in \dot{\msQ}_\mp,\label{URg-T-tri}\\
[\mrT_2^\pm,\mrT_1^\pm]=[\dot{\mrR}_\pi,\mrT_1^\pm + \mrT_2^\pm]+\hbar\left(\dot{\mrR}_\pi \mrT_1^\pm \mrT_2^\pm - \mrT_2^\pm\mrT_1^\pm \dot{\mrR}_\pi\right), \label{URg-RTT:1}\\
[\mrT_2^-,\mrT_1^+]=[\dot{\mrR}_\pi,\mrT_1^+ + \mrT_2^-]+\hbar\left(\dot{\mrR}_\pi \mrT_1^+ \mrT_2^- - \mrT_2^-\mrT_1^+ \dot{\mrR}_\pi\right),
\label{URg-RTT:2}
\end{gather}
where $\mrT^\pm$ is the generating matrix  
\begin{equation*}
\mrT^\pm = \sum_{i,j\in \mcI}E_{ij}\otimes t_{ij}^\pm \in \End(V)\otimes_\C  \UR{\mfg}. 
\end{equation*}
\end{definition}
\begin{remark}\label{R:URg-def}
The relation \eqref{URg-T-tri} should be viewed as an identity in $\End(V)\otimes_\C \UR{\mfg}$, while \eqref{URg-RTT:1} and \eqref{URg-RTT:2} are relations in $\End(V)^{\otimes 2}\otimes_\C \UR{\mfg}$, with both tensor products taken over $\C$. Here we note that, since $\mcV=V[\![\hbar]\!]$ satisfies $\End_{\C[\![\hbar]\!]}(\mcV)\cong \End(V)[\![\hbar]\!]$ and $V$ is a finite-dimensional complex vector space, one has
\begin{equation*}
\End_{\C[\![\hbar]\!]}(\mcV)\otimes \End_{\C[\![\hbar]\!]}(\mcV)\cong \End(V)^{\otimes 2} \otimes_\C \C[\![\hbar]\!].
\end{equation*}
Hence, $\dot{\mrR}_\pi=(\dot{\mrR}_\pi)_{12}$ may be naturally viewed as an element in $\End(V)^{\otimes 2}\otimes_\C \UR{\mfg}$.
In addition, we emphasize that, just as $\mfg_\scriptr$ depends on the evaluation $\scriptr_\pi$ of $\scriptr$ (see Remark \ref{R:g_r-depends}), the algebra $\UR{\mfg}$ takes as input the evaluation $\mrR_\pi$ of $R$ on the underlying finite-dimensional representation $V[\![\hbar]\!]$. As this representation will remain fixed, suppressing the subscript $\pi$ in the notation $\UR{\mfg}$ shall not cause any ambiguity. 
\end{remark}
Since $\dot{\mrR}_\pi$ reduces to $\scriptr_\pi$ modulo $\hbar$, it follows from the above definition  that $\UR{\mfg}$ is a $\C[\![\hbar]\!]$-algebra deformation of the enveloping algebra $U(\mfg_\scriptr)$ (see Definition \ref{Def:gr}). In more detail, the assignment $\mrT^\pm \mapsto L^\pm$ induces an epimorphism of $\C[\![\hbar]\!]$-algebras $\UR{\mfg}\onto U(\mfg_\scriptr)$ (where $\hbar$ operates as $0$ in $U(\mfg_\scriptr)$) which descends to an isomorphism of $\C$-algebras 
\begin{equation}\label{UR-sc-limit}
\UR{\mfg}/\hbar \UR{\mfg}\iso U(\mfg_\scriptr). 
\end{equation}
We will prove in Section \ref{ssec:URg-main} that $\UR{\mfg}$ is a quantization of the Lie bialgebra $(\mfg_\scriptr,\delta_\scriptr)$ introduced in Theorem \ref{T:g-main}. In particular, it is topologically free and thus isomorphic to $U(\mfg_\scriptr)[\![\hbar]\!]$ as a $\C[\![\hbar]\!]$-module. 

We note that $\mrL^\pm=I+\hbar\mrT^\pm$ satisfy the more familiar matrix relations 
\begin{gather}\label{Eq:RLL1}
\mathrm{L}_\lambda^\pm=0 \quad \forall \; \lambda \in \dot{\msQ}_\mp,\\
\label{Eq:RLL2}\mrR_\pi \mrL_1^\pm \mrL_2^\pm = \mrL_2^\pm \mrL_1^\pm \mathrm{R}_\pi, \quad \mrR_\pi \mrL_1^+ \mrL_2^- = \mrL_2^- \mrL_1^+\mathrm{R}_\pi.
\end{gather}
Indeed, the first relation is a direct consequence of \eqref{URg-T-tri}, while the second and third are consequences of the sequence of equalities
\begin{equation}\label{T->L}
\begin{aligned}
\mrR_\pi &\mrL_1^{\eps_1}\mrL_2^{\eps_2}-\mrL_2^{\eps_2}\mrL_1^{\eps_1}\mrR_\pi\\
&
= \hbar \mrR_\pi \mrT_1^{\eps_1}+ \hbar \mrR_\pi\mrT_2^{\eps_2} +\hbar^2\mrR_\pi \mrT_1^{\eps_1}\mrT_2^{\eps_2}
- \hbar \mrT_1^{\eps_1}\mrR_\pi - \hbar \mrT_2^{\eps_2}\mrR_\pi - \hbar^2  \mrT_2^{\eps_2}\mrT_1^{\eps_1}\mrR_\pi\\
& 
= \hbar [\mrR_\pi, \mrT_1^{\eps_1}+ \mrT_2^{\eps_2}]+ \hbar^2 \left(\mrR_\pi \mrT_1^{\eps_1}\mrT_2^{\eps_2}-  \mrT_2^{\eps_2}\mrT_1^{\eps_1}\mrR_\pi\right),
\end{aligned}
\end{equation}
where $(\eps_1,\eps_2)$ takes value $(\pm,\pm)$ or $(+,-)$. It is the relations \eqref{Eq:RLL1} and \eqref{Eq:RLL2}, rather than \eqref{URg-RTT:1} and \eqref{URg-RTT:2}, which have primarily featured in the literature; see \cite{FRT}, \cite{KS-book}*{\S8.5}, \cite{Majid-book}*{\S4.1} and references therein. The coefficients of $\mathrm{L}^+$ and $\mathrm{L}^-$ do not, however, generate $\UR{\mfg}$ as a topological $\C[\![\hbar]\!]$-algebra. Rather, they naturally arise as a system of topological generators for the Drinfeld--Gavarini subalgebra of $\UR{\mfg}$; see Remark \ref{R:URg-main}. Nonetheless, we have the following useful lemma.
\begin{lemma}\label{L:URg-hom}
Let $\mcA$ be a topologically free $\C[\![\hbar]\!]$-algebra, and suppose that $\mbT^+$ and $\mbT^-$ belong to $\End(V)\otimes_\C \mcA$. Then the assignment $\mrT^\pm \mapsto \mbT^\pm$ extends to a $\C[\![\hbar]\!]$-algebra homomorphism $\UR{\mfg}\to \mcA$ if and only if $\mbL^\pm=I+\hbar \mbT^\pm$ satisfy the relations
\begin{gather}
\mbL_\lambda^\pm=0 \quad \forall\; \lambda\in \dot{\mathsf{Q}}_{\mp},\label{URg-L-triangle}\\
\mathrm{R}_\pi \mbL_1^\pm \mbL_2^\pm = \mbL_2^\pm \mbL_1^\pm \mathrm{R}_\pi, \quad \mathrm{R}_\pi \mbL_1^+ \mbL_2^- = \mbL_2^- \mbL_1^+ \mathrm{R}_\pi. \label{URg-RLL}
\end{gather}
\end{lemma}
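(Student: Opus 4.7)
The plan is to verify that the stated equivalence reduces to the computation already performed in display \eqref{T->L}, together with the fact that $\mcA$ is torsion free. First I would observe that since $I \in \End(V)$ lies in $\mfgl(V)_0$, the triangularity conditions $\mrT_\lambda^\pm = 0$ (for $\lambda \in \dot{\msQ}_\mp$) and $\mrL_\lambda^\pm = 0$ are manifestly equivalent for any element $\mrT^\pm \in \End(V)\otimes_\C \mcA$ with $\mrL^\pm = I + \hbar\mrT^\pm$. Hence the content of the lemma is entirely about the equivalence of the quadratic relations \eqref{URg-RTT:1}, \eqref{URg-RTT:2} with the RLL relations \eqref{URg-RLL}.

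Next, I would recall that by the universal property of $\UR{\mfg}$ (regarded as a topological $\C[\![\hbar]\!]$-algebra presented by generators and relations), the assignment $\mrT^\pm \mapsto \mbT^\pm$ extends to a $\C[\![\hbar]\!]$-algebra homomorphism $\UR{\mfg} \to \mcA$ if and only if the elements $\mbT^\pm$ satisfy the defining relations \eqref{URg-T-tri}--\eqref{URg-RTT:2} in $\mcA$. So what needs to be proved is that these relations on $\mbT^\pm$ are equivalent to the RLL relations on $\mbL^\pm$. For the forward direction, the computation in \eqref{T->L}, carried out with $\mbT^\pm$ in place of $\mrT^\pm$, yields
\begin{equation*}
\mrR_\pi \mbL_1^{\eps_1}\mbL_2^{\eps_2}-\mbL_2^{\eps_2}\mbL_1^{\eps_1}\mrR_\pi = \hbar[\mrR_\pi,\mbT_1^{\eps_1}+\mbT_2^{\eps_2}] + \hbar^2(\mrR_\pi \mbT_1^{\eps_1}\mbT_2^{\eps_2} - \mbT_2^{\eps_2}\mbT_1^{\eps_1}\mrR_\pi),
\end{equation*}
for $(\eps_1,\eps_2)\in\{(\pm,\pm),(+,-)\}$. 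Substituting $\mrR_\pi = I + \hbar\dot{\mrR}_\pi$ and expanding shows that the right-hand side equals
\begin{equation*}
\hbar^2\left([\mbT_1^{\eps_1},\mbT_2^{\eps_2}] - [\dot{\mrR}_\pi,\mbT_1^{\eps_1}+\mbT_2^{\eps_2}] - \hbar\left(\dot{\mrR}_\pi\mbT_1^{\eps_1}\mbT_2^{\eps_2} - \mbT_2^{\eps_2}\mbT_1^{\eps_1}\dot{\mrR}_\pi\right)\right),
\end{equation*}
up to sign, so the RLL relations for $\mbL^\pm$ follow at once from \eqref{URg-RTT:1} and \eqref{URg-RTT:2}.

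For the converse direction, the same identity shows that if the RLL relations \eqref{URg-RLL} hold, then the bracketed expression above vanishes after multiplication by $\hbar^2$. The main (and essentially only) obstacle is extracting the unscaled relation: since $\mcA$ is topologically free, it is torsion free as a $\C[\![\hbar]\!]$-module, and thus the factor of $\hbar^2$ can be cancelled. This gives precisely the relations \eqref{URg-RTT:1} and \eqref{URg-RTT:2} satisfied by $\mbT^\pm$, completing the proof. The lemma is essentially a bookkeeping exercise built on display \eqref{T->L}, with torsion-freeness providing the only nontrivial ingredient.
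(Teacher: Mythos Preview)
Your proof is correct and follows exactly the same approach as the paper, which simply says the lemma ``is a consequence of the computation \eqref{T->L} with $\mrL^\pm$ and $\mrT^\pm$ replaced by $\mbL^\pm$ and $\mbT^\pm$.'' You have unpacked this more carefully, making explicit both the equivalence of the triangularity conditions and the role of torsion-freeness in cancelling the $\hbar^2$ factor. One minor point: your displayed expression for the right-hand side has the wrong signs (the computation actually gives $+[\dot{\mrR}_\pi,\mbT_1^{\eps_1}+\mbT_2^{\eps_2}] + \hbar(\cdots)$, not minus), though your hedge ``up to sign'' and the subsequent logic remain valid since the relation being zero is insensitive to an overall sign.
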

\begin{proof}
This is a consequence of the computation \eqref{T->L} with $\mrL^\pm$ and $\mrT^\pm$ replaced by $\mbL^\pm$ and $\mbT^\pm$, respectively. \qedhere
\end{proof}
%
%

\subsection{Automorphisms of $\UR{\mfg}$}\label{ssec:URg-aut}
Consider the group $\mathrm{GL}_I(V)$ of invertible transformations in $\End(V)[\![\hbar]\!]$ based at $I$:
\begin{equation*}
\mathrm{GL}_I(V)=I+\hbar \mfgl(V)[\![\hbar]\!]\subset \End(V)[\![\hbar]\!].
\end{equation*}
Let $\mathrm{GL}_I(\mrR_\pi)$ and $\mathrm{GL}_I(V)^\mfg$ be the subgroups of $\mathrm{GL}_I(V)$ defined by
\begin{gather*}
\mathrm{GL}_I(\mrR_\pi)=\{D\in \mathrm{GL}_I(V): \mathrm{Ad}(D\otimes D)(\mrR_\pi)=\mrR_\pi\}\cap \mfgl(V)^\mfh[\![\hbar]\!],\\
\mathrm{GL}_I(V)^\mfg= \mathrm{GL}_I(V)\cap \mfgl(V)^\mfg[\![\hbar]\!]. 
\end{gather*}
Here  $\mfgl(V)^\mfh$ is the centralizer of $\mfh\cong \pi(\mfh)$ in $\End(V)$, which is nothing but the zero weight space $\mfgl(V)_0\subset \mfgl(V)$ with respect to to the adjoint action of $\mfg\cong \pi(\mfg)$ on $\mfgl(V)$.  

Note that, since $\mathrm{GL}_I(V)^\mfg\subset \mfgl(V)^\mfg[\![\hbar]\!]$, the group $\mathrm{GL}_I(V)^\mfg$ is contained in the centralizer of $\pi_\hbar(\Uh{\mfg})=\pi(U(\mfg))[\![\hbar]\!]$ in $\End(V)[\![\hbar]\!]$.
\begin{proposition}\label{P:aut}
Let $C$ and $D$ belong to $\mathrm{GL}_I(\mrR_\pi)$, and $\mathscr{C}\in \mathrm{GL}_I(V)^\mfg\times \mathrm{GL}_I(V)^\mfg$. Then there exists unique $\C[\![\hbar]\!]$-algebra automorphisms $\uptheta_C^D$ and $\upchi_{\mathscr{C}}$ of $\UR{\mfg}$ satisfying 
\begin{equation*}
\uptheta_C^D(\mrL^\pm)=C\mrL^\pm D \quad \text{ and }\quad \upchi_{\mathscr{C}}(\mrL^\pm)=\mrL^\pm C^\pm,
\end{equation*}
where $C^+$ and $C^-$ are the first and second components of $\mathscr{C}$, respectively.  
\end{proposition}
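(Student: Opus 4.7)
The plan is to invoke Lemma \ref{L:URg-hom} to reduce the construction of $\uptheta_C^D$ and $\upchi_{\mathscr{C}}$ to verifying that the matrices $\mbL^\pm := C \mrL^\pm D$ and $\mbL^\pm := \mrL^\pm C^\pm$, respectively, satisfy the relations \eqref{URg-L-triangle} and \eqref{URg-RLL}. For the lemma to apply, one must first confirm that these matrices have the form $I + \hbar \mbT^\pm$ with $\mbT^\pm \in \End(V) \otimes_\C \UR{\mfg}$; this is automatic for $\upchi_{\mathscr{C}}$, and for $\uptheta_C^D$ it follows from the identity $CD - I \in \hbar \End(V)[\![\hbar]\!]$, which holds since $C, D \in \mathrm{GL}_I(V) = I + \hbar \mfgl(V)[\![\hbar]\!]$.

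For $\uptheta_C^D$: The triangularity \eqref{URg-L-triangle} is immediate because $C, D \in \mfgl(V)^\mfh[\![\hbar]\!] = \mfgl(V)_0[\![\hbar]\!]$ preserve weight spaces, so $(C\mrL^\pm D)_\lambda = C \mrL_\lambda^\pm D$, which vanishes on $\dot{\msQ}_\mp$ by \eqref{Eq:RLL1}. For the $RLL$-type relations \eqref{URg-RLL}, I would use that $C_1$ and $D_1$ act trivially on the second tensor factor of $\End(V)^{\otimes 2}$ (and vice versa for subscript $2$), so they commute past $\mrL_2^{\epsilon_2}$, $C_2$ and $D_2$, yielding
\begin{equation*}
\mbL_1^{\epsilon_1}\mbL_2^{\epsilon_2} = C_1 C_2\, \mrL_1^{\epsilon_1} \mrL_2^{\epsilon_2}\, D_1 D_2
\quad \text{and} \quad
\mbL_2^{\epsilon_2}\mbL_1^{\epsilon_1} = C_1 C_2\, \mrL_2^{\epsilon_2} \mrL_1^{\epsilon_1}\, D_1 D_2.
\end{equation*}
By hypothesis $C, D \in \mathrm{GL}_I(\mrR_\pi)$, the defining condition of this group gives $[C_1 C_2, \mrR_\pi] = 0 = [D_1 D_2, \mrR_\pi]$; combining this with the relations \eqref{Eq:RLL2} for $\mrL^\pm$ yields \eqref{URg-RLL} for $\mbL^\pm$ in each of the cases $(\epsilon_1, \epsilon_2) \in \{(+,+), (-,-), (+,-)\}$.

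For $\upchi_{\mathscr{C}}$: Triangularity holds because $C^\pm \in \mfgl(V)^\mfg \subset \mfgl(V)_0$. Distinct-tensor-factor commutativity gives $\mbL_1^{\epsilon_1}\mbL_2^{\epsilon_2} = \mrL_1^{\epsilon_1}\mrL_2^{\epsilon_2}\, C_1^{\epsilon_1}C_2^{\epsilon_2}$ and the analogous identity for $\mbL_2^{\epsilon_2}\mbL_1^{\epsilon_1}$. The key observation is that $C^\pm \in \mfgl(V)^\mfg$ centralizes $\pi_\hbar(\Uh{\mfg})$, and hence $C_1^{\epsilon_1}C_2^{\epsilon_2}$ commutes with $\mrR_\pi = (\pi_\hbar\otimes\pi_\hbar)(R)$. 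Combining this with \eqref{Eq:RLL2} yields \eqref{URg-RLL} for $\mbL^\pm$.

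Invertibility and uniqueness are then straightforward: the groups $\mathrm{GL}_I(\mrR_\pi)$ and $\mathrm{GL}_I(V)^\mfg$ are visibly closed under inversion, and the compositions $\uptheta_{C^{-1}}^{D^{-1}} \circ \uptheta_C^D$ and $\upchi_{\mathscr{C}^{-1}} \circ \upchi_{\mathscr{C}}$ act as the identity on each $\mrL^\pm$, hence on the topological generators of $\UR{\mfg}$. Uniqueness follows because the coefficients of $\mrT^\pm$ topologically generate $\UR{\mfg}$, so any continuous $\C[\![\hbar]\!]$-algebra endomorphism is determined by its values on them. The only mildly delicate point in the argument is the tensor-factor bookkeeping when commuting $C$, $D$ and $C^\pm$ past $\mrL^\pm$ and $\mrR_\pi$; I do not expect any genuine obstacle.
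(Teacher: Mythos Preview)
Your approach is essentially identical to the paper's: reduce via Lemma~\ref{L:URg-hom} to verifying \eqref{URg-L-triangle} and \eqref{URg-RLL} for the transformed matrices, using weight-zero-ness of $C,D,C^\pm$ for triangularity and the commutation of $C_1C_2$, $D_1D_2$, $C_i^\pm$ with $\mrR_\pi$ for the $RLL$ relations, then deduce invertibility from the group structure.

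The one point you gloss over, which the paper makes explicit, is that Lemma~\ref{L:URg-hom} requires the target algebra $\mcA$ to be \emph{topologically free}; here $\mcA=\UR{\mfg}$, and this is not obvious from the definition. The paper handles this by a forward reference to Theorem~\ref{T:URg-main}. The same issue recurs in your invertibility/uniqueness step: knowing that a composition fixes $\mrL^\pm=I+\hbar\mrT^\pm$ only tells you it fixes $\hbar\mrT^\pm$, and passing from this to fixing $\mrT^\pm$ (the actual generators) requires torsion-freeness. You should flag this dependence explicitly.
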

\begin{proof}
It is a consequence of Theorem \ref{T:URg-main}, proven below, that $\UR{\mfg}$ is a torsion free $\C[\![\hbar]\!]$-module. In what follows we shall make use of this fact. 

Since $\UR{\mfg}$ is torsion free, to see $\uptheta_C^D$ and $\upchi_{\mathscr{C}}$ extend to $\C[\![\hbar]\!]$-algebra endomorphisms of $\UR{\mfg}$, it is enough to show that the matrices $\uptheta_C^D(\mrL^\pm)$ and $\upchi_{\mathscr{C}}(\mrL^\pm)$ satisfy the relations \eqref{URg-L-triangle} and \eqref{URg-RLL} of Lemma \ref{L:URg-hom}. Since each of the matrices $C,D,C^+$ and $C^-$ belong to $\mfgl(V)^\mfh[\![\hbar]\!]$, one has $\uptheta_C^D(\mrL^\pm)_\lambda= C\mrL^\pm_\lambda D$ and $\upchi_{\mathscr{C}}(\mrL^\pm)_\lambda=\mrL^\pm_\lambda  C^\pm$ for any $\lambda\in \mfh^\ast$, from which we see that 
\begin{equation*}
\uptheta_C^D(\mrL^\pm)_\lambda=0=\upchi_{\mathscr{C}}(\mrL^\pm)_\lambda \quad \;\forall \lambda \in \dot{\msQ}_\mp. 
\end{equation*}
Let us now verify that $\uptheta_C^D(\mrL^\pm)$ and $\upchi_{\mathscr{C}}(\mrL^\pm)$ both satisfy the relations of \eqref{URg-RLL}, beginning with the former. Using that $\mrR_\pi C_1 C_2=C_1 C_2 \mrR_\pi$ (as $C\in \mathrm{GL}_I(\mrR_\pi)$) and applying \eqref{URg-RLL} for $\mathrm{L}^\pm$, we obtain
\begin{align*}
\mrR_\pi \uptheta_C^D(\mrL^{\eps_1})_1 \uptheta_C^D(\mrL^{\eps_2})_2
&
=\mrR_\pi C_1C_2 \mrL^{\eps_1}_1  \mrL^{\eps_2}_2 D_1D_2 \\
&
=C_1C_2 \mrR_\pi  \mrL^{\eps_1}_1  \mrL^{\eps_2}_2 D_1D_2\\
&
=C_1C_2     \mrL^{\eps_2}_2\mrL^{\eps_1}_1 \mrR_\pi D_1D_2,
\end{align*}
where $(\veps_1, \veps_2)$ takes value $(\pm,\pm)$ or $(+,-$). Since $\mrR_\pi D_1D_2= D_1 D_2 \mrR_\pi$, this gives 
\begin{equation*}
\mrR_\pi \uptheta_C^D(\mrL^{\eps_1})_1 \uptheta_C^D(\mrL^{\eps_2})_2 = C_1C_2     \mrL^{\eps_2}_2\mrL^{\eps_1}_1 D_1D_2\mrR_\pi = 
\uptheta_C^D(\mrL^{\eps_2})_2\uptheta_C^D(\mrL^{\eps_1})_1\mrR_\pi.
\end{equation*}
Consider now $\upchi_{\mathscr{C}}(\mrL^\pm)$. Using \eqref{URg-RLL} for $\mathrm{L}^\pm$, we obtain 
\begin{equation}\label{chi-hom}
\mrR_\pi \upchi_{\mathscr{C}}(\mrL^{\eps_1})_1 \upchi_{\mathscr{C}}(\mrL^{\eps_2})_2
= 
\mrR_\pi \mrL^{\eps_1}_1  \mrL^{\eps_2}_2 C_1^{\eps_1}C_2^{\eps_2}= \mrL^{\eps_2}_2\mrL^{\eps_1}_1\mrR_\pi C_1^{\eps_1}C_2^{\eps_2}.
\end{equation}
Since $\mrR_\pi\in \pi_\hbar(\Uh{\mfg})\otimes \pi_\hbar(\Uh{\mfg})$ and $C^\pm\in \mathrm{GL}_I(V)^\mfg$,  we have $C_i^\pm\mrR_\pi=\mrR_\pi C_i^\pm$.  Hence, the right-hand side above is just $\upchi_{\mathscr{C}}(\mrL^{\eps_2})_2\upchi_{\mathscr{C}}(\mrL^{\eps_1})_1 \mrR_\pi$, as desired.

To complete the proof of the lemma, we are left to explain why $\uptheta_C^D$ and $\upchi_{\mathscr{C}}$ are invertible. Letting $\mathscr{C}^{-1}$ denote the inverse of $\mathscr{C}$ in the direct product group $\mathrm{GL}_I(V)^\mfg\times \mathrm{GL}_I(V)^\mfg$, we have 
\begin{gather*}
(\uptheta_C^D \circ \uptheta_{C^{-1}}^{D^{-1}} )(\mrL^\pm)=(\uptheta_{C^{-1}}^{D^{-1}} \circ  \uptheta_C^D)(\mrL^\pm)=\mrL^\pm= (\upchi_{\mathscr{C}} \circ \upchi_{\mathscr{C}^{-1}})(\mrL^\pm)= (\upchi_{\mathscr{C}^{-1}} \circ \upchi_{\mathscr{C}} )(\mrL^\pm).
\end{gather*}
As $\UR{\mfg}$ is torsion free, it follows that $\uptheta_{C^{-1}}^{D^{-1}}=(\uptheta_C^D)^{-1}$ and $\upchi_{\mathscr{C}^{-1}}=\upchi_{\mathscr{C}}^{-1}$. \qedhere 
\end{proof}
A particularly important subgroup of $\mathrm{GL}_I(\mrR_\pi)$ consists of the $\hbar$-formal torus 
\begin{equation*}
\mathrm{G}:=\exp(\hbar \pi(\mfh))=\{q^{\pi(h)}: h\in \mfh\}. 
\end{equation*}
If $C=q^{\pi(h)}$ with $h\in \mfh$ and $D=I$, we shall write $\upvartheta_h$ for $\uptheta_C^D$. Similarly, if $C=q^{\pi(h)}$ and  $D=C^{-1}$, we will write $\upvartheta_h^{\mathsf{Ad}}$ for $\uptheta_C^D$. In summary, we have
\begin{equation}\label{h-aut}
\upvartheta_h:= \uptheta_{q^{\pi(h)}}^I  \quad \text{ and }\quad \upvartheta_h^{\mathsf{Ad}}:= \uptheta_{q^{\pi(h)}}^{q^{-\pi(h)}} \quad \forall\; h\in \mfh.
\end{equation}
%
%
%
\subsection{Quantizing $\mfz_V^\pm$}\label{ssec:quant-z}
We now shift our attention towards establishing quantum analogues of the main results from Section \ref{sec:g-r}. The goal of this particular subsection is to introduce a quantization of the Lie bialgebra $\mfz_V^\pm$ from Section \ref{ssec:g-inv-dual}. 

To this end, recall that $\mfz_V^+$ denotes the Lie bialgebra dual to the Lie algebra of $\mfg$-invariants $\mfgl(V)^\mfg$, equipped with trivial Lie cobracket, and $\mfz_V^-$ denotes the opposite Lie bialgebra to $\mfz_V^+$. As in Section \ref{sec:g-r}, we let $\mfZ^\pm \in \mfgl(V)^\mfg \otimes \mfz_V^\pm$ denote the canonical element. In what follows we shall view $\mfZ^\pm$ as a generating matrix for the $\C[\![\hbar]\!]$-algebra $\msS(\mfz_V^\pm)[\![\hbar]\!]$, where $\msS(\mfz_V^\pm)\cong U(\mfz_V^\pm)$ is the symmetric algebra on $\mfz_V^\pm$.
\begin{lemma}\label{L:q-z_V}
$\msS(\mfz_V^+)[\![\hbar]\!]$ is a quantized enveloping algebra with coproduct $\Delta$, counit $\veps$ and antipode $S$ uniquely determined by
\begin{equation*}
\Delta(\mfZ^+)=\mfZ^+_{[1]}+\mfZ^+_{[2]}+\hbar \mfZ^+_{[1]}\mfZ^+_{[2]}, \quad S(\mfZ^+)=\sum_{b>0} \hbar^{b-1} (-\mfZ^+)^b,\quad \veps(\mfZ^+)=0.
\end{equation*}
\end{lemma}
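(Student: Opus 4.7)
The plan is to verify the Hopf axioms directly on the generating matrix $\mfZ^+$, using that the coefficients of $\mfZ^+$ furnish a $\C$-basis of $\mfz_V^+$ which freely generates $\msS(\mfz_V^+)$ as a commutative algebra. Consequently, the prescribed formulas on $\mfZ^+$ uniquely extend by multiplicativity and $\C[\![\hbar]\!]$-linearity to $\C[\![\hbar]\!]$-algebra maps
\begin{equation*}
\Delta:\msS(\mfz_V^+)[\![\hbar]\!]\to \msS(\mfz_V^+)[\![\hbar]\!]\otimes \msS(\mfz_V^+)[\![\hbar]\!],\quad S:\msS(\mfz_V^+)[\![\hbar]\!]\to \msS(\mfz_V^+)[\![\hbar]\!],\quad \veps:\msS(\mfz_V^+)[\![\hbar]\!]\to \C[\![\hbar]\!].
\end{equation*}
Observe in particular that the series defining $S(\mfZ^+)$ converges in the $\hbar$-adic topology and represents $-\mfZ^+(I+\hbar\mfZ^+)^{-1}$.

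Next I would check the Hopf algebra axioms by matrix manipulations. For coassociativity, a direct calculation gives that both $(\Delta \otimes \id)\Delta(\mfZ^+)$ and $(\id \otimes \Delta)\Delta(\mfZ^+)$ equal the symmetric expression
\begin{equation*}
\mfZ^+_{[1]}+\mfZ^+_{[2]}+\mfZ^+_{[3]}+\hbar\bigl(\mfZ^+_{[1]}\mfZ^+_{[2]}+\mfZ^+_{[1]}\mfZ^+_{[3]}+\mfZ^+_{[2]}\mfZ^+_{[3]}\bigr)+\hbar^2 \mfZ^+_{[1]}\mfZ^+_{[2]}\mfZ^+_{[3]}.
\end{equation*}
The counit axiom on $\mfZ^+$ follows by inspection. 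For the antipode axiom, evaluating $m(S\otimes \id)\circ \Delta$ and $m(\id \otimes S)\circ \Delta$ on $\mfZ^+$ yields the matrix identities $S(\mfZ^+)(I+\hbar\mfZ^+)=-\mfZ^+$ and $(I+\hbar\mfZ^+)S(\mfZ^+)=-\mfZ^+$ respectively. Both are verified by substituting the geometric series defining $S(\mfZ^+)$, using that $\mfZ^+$ commutes with itself inside the commutative algebra $\End(V)\otimes \msS(\mfz_V^+)[\![\hbar]\!]$, so that left and right inverses of $I+\hbar \mfZ^+$ agree.

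Finally, to conclude that $\msS(\mfz_V^+)[\![\hbar]\!]$ is a quantization of the Lie bialgebra $\mfz_V^+$, I would argue topological freeness directly from the isomorphism $\msS(\mfz_V^+)[\![\hbar]\!]\cong \msS(\mfz_V^+)\otimes_\C \C[\![\hbar]\!]$. Reducing the coproduct modulo $\hbar$ gives $\Delta(\mfZ^+)=\mfZ^+_{[1]}+\mfZ^+_{[2]}$, so $\mfz_V^+$ is primitive in the semiclassical limit, which therefore coincides with $\msS(\mfz_V^+)\cong U(\mfz_V^+)$ equipped with its standard Hopf algebra structure. The induced Lie cobracket on $\mfz_V^+$ is obtained from
\begin{equation*}
\hbar^{-1}\bigl(\Delta(\mfZ^+)-\Delta^{\mathrm{op}}(\mfZ^+)\bigr)\equiv [\mfZ^+_{[1]},\mfZ^+_{[2]}] \mod \hbar,
\end{equation*}
and thus agrees with the cobracket fixed in Section~\ref{ssec:g-inv-dual}. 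There is no real obstacle here; the only minor subtlety is making sure that the power series defining $S(\mfZ^+)$ is compatible with the commutativity of $\msS(\mfz_V^+)$ and consistently inverts $I+\hbar\mfZ^+$ on both sides.
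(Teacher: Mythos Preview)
Your proof is correct and follows essentially the same approach as the paper's, which streamlines the verification by passing to the grouplike matrix $\mathring{\mfZ}^+ = I+\hbar\mfZ^+$ (so that $\Delta(\mathring{\mfZ}^+)=\mathring{\mfZ}^+_{[1]}\mathring{\mfZ}^+_{[2]}$, $S(\mathring{\mfZ}^+)=(\mathring{\mfZ}^+)^{-1}$, $\veps(\mathring{\mfZ}^+)=I$) but is otherwise identical in substance. One small correction: the algebra $\End(V)\otimes_\C\msS(\mfz_V^+)[\![\hbar]\!]$ is \emph{not} commutative in general, since $\mfgl(V)^\mfg$ need not be (cf.\ Lemma~\ref{L:gl(V)^g}); however, your antipode check only uses that $\mfZ^+$ commutes with its own powers, which is automatic, so the argument is unaffected.
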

\begin{proof}
It is clear that the formulas for $\Delta(\mfZ^+)$, $S(\mfZ^+)$ and $\veps(\mfZ^+)$ given in the statement of the lemma uniquely detetermine $\C[\![\hbar]\!]$-algebra homomorphisms 
\begin{gather*}
\Delta:\msS(\mfz_V^+)[\![\hbar]\!]\to \msS(\mfz_V^+)[\![\hbar]\!]\otimes \msS(\mfz_V^+)[\![\hbar]\!], \quad S:\msS(\mfz_V^+)[\![\hbar]\!]\to \msS(\mfz_V^+)[\![\hbar]\!],\\
\veps:\msS(\mfz_V^+)[\![\hbar]\!]\to \C[\![\hbar]\!].
\end{gather*}
Note that on $\mathring{\mfZ}^+:=I+\hbar \mfZ^+$, one has the more familiar matrix formulas 
\begin{equation*}
\Delta(\mathring{\mfZ}^+)=\mathring{\mfZ}^+_{[1]}\mathring{\mfZ}^+_{[2]}, \quad S(\mathring{\mfZ}^+)=(\mathring{\mfZ}^+)^{-1}, \quad \veps(\mathring{\mfZ}^+)=I. 
\end{equation*}
In particular, $(\Delta\otimes \id)\Delta(\mathring{\mfZ}^+)$ and  $(\id\otimes \Delta)\Delta(\mathring{\mfZ}^+)$ both take value $\mathring{\mfZ}^+_{[1]}\mathring{\mfZ}^+_{[2]}\mathring{\mfZ}^+_{[3]}$ and $(\veps\otimes\id)\Delta(\mathring{\mfZ}^+)$ and $(\id \otimes \veps)\Delta(\mathring{\mfZ}^+)$ both take value $\mathring{\mfZ}^+$, where we work through the canonical identifications $\C[\![\hbar]\!]\otimes \msS(\mfz_V^+)[\![\hbar]\!]\cong \msS(\mfz_V^+)[\![\hbar]\!]\cong \msS(\mfz_V^+)[\![\hbar]\!]\otimes \C[\![\hbar]\!]$. 
As $\msS(\mfz_V^+)[\![\hbar]\!]$ is torsion free, it follows that $\Delta$ is both coassociative and counital. By similar reasoning, to check that $S$ is indeed an antipode it is enough to see that the identity 
\begin{equation*}
m\circ (S\otimes \id)\circ \Delta=\iota\circ \veps = m \circ (\id\otimes S)\circ \Delta
\end{equation*}
is satisfied on $\mathring{\mfZ}^+$, where $m$ is the product on $S(\mfz_V^+)[\![\hbar]\!]$ and $\iota:\C[\![\hbar]\!]\into S(\mfz_V^+)[\![\hbar]\!]$ is the unit map. This is a consequence of the relations $(S\otimes\id)\Delta(\mathrm{\mfZ}^+)=(\mathring{\mfZ}^+_{[1]})^{-1}\mathring{\mfZ}^+_{[2]}$ and $(\id \otimes S)\Delta(\mathrm{\mfZ}^+)=\mathring{\mfZ}^+_{[1]}(\mathring{\mfZ}^+_{[2]})^{-1}$.

The above argument proves that $\msS(\mfz_V^+)[\![\hbar]\!]$ is a topological Hopf algebra over $\C[\![\hbar]\!]$, which is a trivially topologically free. As the coefficients of $\mfZ^\pm$ are primitive modulo $\hbar$, it is a flat Hopf algebra deformation of $\msS(\mfz_V^+)\cong U(\mfz_V^+)$, and thus a quantized enveloping algebra with semiclasical limit $U(\mfz_V^+)$.  \qedhere
\end{proof}
\begin{remark}
We note that the Lie bialgebra structure on $\mfz_V^+$ induced by the coproduct $\Delta$ from the above lemma coincides with that from Section \ref{ssec:g-inv-dual}, referred to at the beginning of the section. Indeed, this follows immediately from the relation
\begin{equation*}
\hbar^{-1}(\Delta-\Delta^{\mathrm{op}})(\mfZ^+)=[\mfZ^+_{[1]},\mfZ^+_{[2]}].
\end{equation*}
\end{remark}
Henceforth, we shall denote the topological Hopf algebra introduced in Lemma \ref{L:q-z_V} by $\msS_\hbar(\mfz_V^+)$. That is, one has 
\begin{equation}\label{S_h-def}
\msS_\hbar(\mfz_V^+)=\msS(\mfz_V^+)[\![\hbar]\!]
\end{equation}
as an algebra, with coproduct, antipode, and counit as defined in Lemma \ref{L:q-z_V}. This notation is intended to avoid confusing $\msS_\hbar(\mfz_V^+)$ with the \textit{trivial Hopf algebra deformation} of the symmetric algebra $\msS(\mfz_V^+)$, which will always be denoted $\msS(\mfz_V^+)[\![\hbar]\!]$. In general, these two structures on $\msS(\mfz_V^+)[\![\hbar]\!]$ are not equivalent, though this is always the case when the underlying representation $V$ of $\mfg$ has no repeated composition factors; see Section \ref{sec:Quasi}. 

Similarly, the notation $\msS_\hbar(\mfz_V^-)$ will be used to denote the topological Hopf algebra which coincides with $\msS(\mfz_V^-)[\![\hbar]\!]$ as a $\C[\![\hbar]\!]$-algebra and has coproduct, counit and antipode determined by the requirement that the algebra isomorphism $\msS(\mfz_V^-)[\![\hbar]\!]\iso \msS_\hbar(\mfz_V^+)^{\mathrm{cop}}$, $\mfZ^-\mapsto \mfZ^+$, is a homomorphism of Hopf algebras. In particular, $\msS_\hbar(\mfz_V^-)$ is a quantization of the Lie bialgebra $\mfz_V^-$. Henceforth, we shall set $\dot{\mfZ}^+=\mfZ^+$ and $\dot{\mfZ}^-=S(\mfZ^-)$, so that 
\begin{equation*}
\dot{\mfZ}^-=\sum_{b>0} \hbar^{b-1} (-\mfZ^-)^b \in \mfgl(V)^\mfg \otimes_\C \msS_\hbar(\mfz_V^-).
\end{equation*}
In particular, one has $I+\hbar\dot\mfZ^-=(I+\hbar \mfZ^-)^{-1}$ and $\dot{\mfZ}^\pm=\pm \mfZ^\pm$ modulo $\hbar$.

\subsection{$\UR{\mfg}$ as a quantization of $\mfg_\scriptr$}\label{ssec:URg-main}
In this section we prove the first main result of Section \ref{sec:URg}, which provides a quantization of Theorem \ref{T:g-main}; see Theorem \ref{T:URg-main} below. 
 
As the universal $R$-matrix $R^D$ of $D(\Uh{\mfb})$ is equal to $1$ modulo $\hbar$ and $D(\Uh{\mfb})$ is torsion free, there are unique matrices $\msT^\pm_\omega\in \End(V)\otimes_\C D(\Uh{\mfb})$ such that
\begin{equation}\label{mdL-def}
\msL^\pm_\omega:=(\pi_\hbar \otimes \id)(\msR_\omega^\pm)=I+\hbar\msT^\pm_\omega,
\end{equation}
where we recall from \eqref{R-omega} that $\msR_\omega^+=(\id \otimes \omega)(R^D)$ and $\msR_\omega^-=(\id\otimes \omega)((R^D_{21})^{-1})$. 
In the following theorem, we equip $D(\Uh{\mfb})\otimes \mathsf{S}_\hbar(\mfz_V^+)\otimes \mathsf{S}_\hbar(\mfz_V^-)$ with the standard tensor product of Hopf algebras structure.
\begin{theorem}\label{T:URg-main}
The assignment $\mrT^\pm \mapsto \msT_\omega^\pm + \dot{\mfZ}^\pm + \hbar \msT_\omega^\pm \dot{\mfZ}^\pm$ uniquely extends to an isomorphism of $\C[\![\hbar]\!]$-algebras 
\begin{equation*}
\Upsilon: \UR{\mfg}\iso D(\Uh{\mfb})\otimes \mathsf{S}_\hbar(\mfz_V^+)\otimes \mathsf{S}_\hbar(\mfz_V^-).
\end{equation*}
Moreover, the unique topological Hopf algebra structure on $\UR{\mfg}$ compatible with $\Upsilon$ has coproduct $\Delta_\mrR$, antipode $S_\mrR$ and counit $\veps_\mrR$ uniquely determined by
\begin{equation*}
\Delta_\mrR(\mathrm{L}^\pm)=\mathrm{L}_{[1]}^\pm\mathrm{L}_{[2]}^\pm, \quad S_\mrR(\mathrm{L}^\pm)=(\mathrm{L}^\pm)^{-1}, \quad \veps_\mrR(\mathrm{L}^\pm)=I
\end{equation*}
and realizes $\UR{\mfg}$ as a quantization of the Lie bialgebra $(\mfg_\scriptr,\delta_\scriptr)$. 
\end{theorem}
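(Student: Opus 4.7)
The plan is to mirror the strategy used to prove Theorem \ref{T:g-main}, lifted to the $\hbar$-adic setting by combining Proposition \ref{P:R-omega} with the quantum analogue of the commutation arguments that made $\Phi_\scriptr$ a homomorphism. Throughout, write $\mcH = D(\Uh{\mfb})\otimes \msS_\hbar(\mfz_V^+)\otimes \msS_\hbar(\mfz_V^-)$ for the proposed target and set $\mathring{\mfZ}^\pm := I+\hbar \dot{\mfZ}^\pm$. The key preliminary observation is that
$I+\hbar(\msT_\omega^\pm + \dot{\mfZ}^\pm + \hbar\msT_\omega^\pm \dot{\mfZ}^\pm) = (I+\hbar\msT_\omega^\pm)(I+\hbar\dot{\mfZ}^\pm),$
so that $\Upsilon(\mathrm{L}^\pm)$ should be realized as the matrix product $\msL_\omega^\pm \cdot \mathring{\mfZ}^\pm \in \End(V)\otimes \mcH$.

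First, I would verify the hypotheses of Lemma \ref{L:URg-hom} for $\tilde{\mrL}^\pm := \msL_\omega^\pm \cdot \mathring{\mfZ}^\pm$. Triangularity \eqref{URg-L-triangle} follows from the decomposition \eqref{R^D-fac} of $R^D$, which forces the matrix part of $\msL_\omega^\pm$ to lie in $\pi_\hbar(\Uh{\mfb^\pm})$, combined with $\mfgl(V)^\mfg \subset \mfgl(V)_0$. The two $RLL$-type relations \eqref{URg-RLL} follow from Proposition \ref{P:R-omega}\eqref{R-omega:2} (applied after $\pi_\hbar \otimes \pi_\hbar \otimes \id$), once one establishes two commutation facts in $\End(V)^{\otimes 2}\otimes \mcH$: (a) $\mrR_\pi$ commutes with $(\mathring{\mfZ}^\pm)_i$, and (b) $(\mathring{\mfZ}^\pm)_i$ commutes with $(\msL_\omega^\pm)_j$ for $i,j\in\{1,2\}$. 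Both rest on the fact that $\mfgl(V)^\mfg$ centralizes $\pi(U(\mfg))\subset \End(V)$, while the Hopf subalgebras $\msS_\hbar(\mfz_V^\pm)$ and $D(\Uh{\mfb})$ commute inside $\mcH$. This matrix commutativity is the technical heart of the argument and is the quantum incarnation of the centrality of the coefficients of $\mfZ^\pm$ used in the proof of Theorem \ref{T:g-main}.

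Next, I would establish that $\Upsilon$ is a bijection via its semiclassical limit. Using \eqref{R^D-limit} and $\dot{\mfZ}^\pm \equiv \pm \mfZ^\pm \bmod \hbar$, one computes $\bar{\Upsilon}(\mrT^\pm)= \mathbb{L}^\pm \pm \mfZ^\pm$. Through the identifications $\UR{\mfg}/\hbar\UR{\mfg}\cong U(\mfg_\scriptr)$ from \eqref{UR-sc-limit} and $\mcH/\hbar\mcH \cong U(D(\mfb^+)\oplus \mfz_V^+\oplus \mfz_V^-)$, this matches $U(\Phi_\scriptr)$, which is an isomorphism by Theorem \ref{T:g-main}. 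Since $\UR{\mfg}$ is separated and complete by construction, while $\mcH$ is topologically free and thus torsion-free and separated, the lifting principles \ref{L1} and \ref{L2} immediately yield injectivity and surjectivity of $\Upsilon$, respectively.

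Finally, for the Hopf structure and the quantization claim, torsion-freeness of $\UR{\mfg}$ (inherited from $\mcH$ via $\Upsilon$) implies that $\Delta_\mrR$, $S_\mrR$ and $\veps_\mrR$ are uniquely determined by their values on $\mathrm{L}^\pm$. Proposition \ref{P:R-omega}\eqref{R-omega:3} yields $\Delta(\msL_\omega^\pm)=(\msL_\omega^\pm)_{[1]}(\msL_\omega^\pm)_{[2]}$, $S(\msL_\omega^\pm)=(\msL_\omega^\pm)^{-1}$ and $\veps(\msL_\omega^\pm)=I$, while Lemma \ref{L:q-z_V} supplies the analogous matrix identities for $\mathring{\mfZ}^\pm$. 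Re-using the matrix commutativity from Step 1, these combine to give exactly the claimed formulas on $\mathrm{L}^\pm$. Expanding $\Delta_\mrR(\mrT^\pm)=\mrT^\pm_{[1]}+\mrT^\pm_{[2]}+\hbar\mrT^\pm_{[1]}\mrT^\pm_{[2]}$ and reducing mod $\hbar$ produces $\delta_\mrR(L^\pm)=[L^\pm_{[1]},L^\pm_{[2]}]$, which agrees with $\delta_\scriptr$ by Theorem \ref{T:g-main}, confirming that $\UR{\mfg}$ quantizes $(\mfg_\scriptr,\delta_\scriptr)$. The bulk of the difficulty lies in the careful bookkeeping of these matrix commutations: even though individual entries of $\msL_\omega^\pm$ and $\mathring{\mfZ}^\pm$ commute in $\mcH$, matrix commutativity in $\End(V)\otimes\mcH$ requires the joint centralizer condition on $\mfgl(V)^\mfg$ and $\pi(U(\mfg))$ in $\End(V)$, and this is the sole non-formal ingredient distinguishing the present argument from its classical counterpart.
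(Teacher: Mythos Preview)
Your proposal is correct and follows essentially the same route as the paper's own proof: verify the hypotheses of Lemma~\ref{L:URg-hom} for $\msL_\omega^\pm\mathring{\mfZ}^\pm$ using Proposition~\ref{P:R-omega}\eqref{R-omega:2} together with the centralizing property of $\mfgl(V)^\mfg$, deduce bijectivity from the semiclassical limit via \ref{L1}--\ref{L2} and Theorem~\ref{T:g-main}, and then read off the Hopf structure from Proposition~\ref{P:R-omega}\eqref{R-omega:3} and Lemma~\ref{L:q-z_V}. The paper even refers back to the computation \eqref{chi-hom} from the proof of Proposition~\ref{P:aut} for exactly the matrix-commutativity step you single out as the crux.
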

\begin{proof}
As $D(\Uh{\mfb})\otimes \mathsf{S}_\hbar(\mfz_V^+) \otimes \mathsf{S}_\hbar(\mfz_V^-)$ is topologically free, to prove that $\mrT^\pm \mapsto \msT_\omega^\pm + \dot{\mfZ}^\pm + \hbar \msT_\omega^\pm \dot{\mfZ}^\pm$ extends to a $\C[\![\hbar]\!]$-algebra homomorphism, it is sufficient to show that the matrices
\begin{equation*}
\mbL^\pm:=\msL^\pm_\omega \cdot (I+\hbar \dot{\mfZ}^\pm) 
\end{equation*}
satisfy the relations \eqref{URg-L-triangle} and \eqref{URg-RLL}. Let us set  $\mathring{\mfZ}^\pm=I+\hbar\dot{\mfZ}^\pm=(I+\hbar \mfZ^\pm)^{\pm 1}$, so that $\mbL^\pm=\msL_\omega^\pm\mathring{\mfZ}^\pm$. As $\mfgl(V)^\mfg$ is contained  in the zero weight component of $\mfgl(V)$, to see that the triangularity relations \eqref{URg-L-triangle} hold, it suffices to prove that $(\msL^\pm_\omega)_\lambda=0$ for $\lambda\in \dot{\msQ}_\mp$. As $\msL_\omega^\pm\in \Uh{\mfb^\pm}\otimes \Uh{\mfb^\pm}$ and the notion of $\Uh{\mfg}$-weights and $\mfg$-weights for $V[\![\hbar]\!]$ are compatible (see Section \ref{ssec:Uhg-fdreps}),  $\pi_\hbar(\Uh{\mfb^\pm})\subset \bigoplus_{\mu\in \msQ_{\pm}}\mfgl(V)_\mu[\![\hbar]\!]$, and therefore $(\msL^\pm_\omega)_\lambda=0$ for $\lambda\in \dot{\msQ}_\mp$, as desired.

Let us now turn to establishing that $\mbL^\pm$ satisfy the quadratic matrix relations \eqref{URg-RLL}. 
Applying $\pi_\hbar\otimes \pi_\hbar \otimes \id$ to the relations of \eqref{R-omega:2} in Proposition \ref{P:R-omega}, we find that 
\begin{equation*}
\mathrm{R}_\pi (\msL_\omega^\pm)_1 (\msL_\omega^\pm)_2 = (\msL_\omega^\pm)_2 (\msL_\omega^\pm)_1 \mathrm{R}_\pi \quad \text{ and }\quad \mathrm{R}_\pi (\msL_\omega^+)_1 (\msL_\omega^-)_2 = (\msL_\omega^-)_2 (\msL_\omega^+)_1 \mathrm{R}_\pi.
\end{equation*}
That $\mbL^\pm=\msL_\omega^\pm \mathring{\mfZ}^\pm$ satisfy the relations of \eqref{URg-RLL} now follows by repeating the computation used in the proof of Proposition \ref{P:aut} to show that $\upchi_\mathscr{C}(\mrL^\pm)=\mrL^\pm  C^\pm$ satisfy the same set of relations,
with $\mrL^\pm$ replaced by $\msL_\omega^\pm$ and $C^\pm$ replaced by $\mathring{\mfZ}^\pm$; see \eqref{chi-hom}. 
%
%
 We may thus conclude that the assignment from the statement of the theorem uniquely extends to a $\C[\![\hbar]\!]$-algebra homomorphism 
\begin{equation*}
\Upsilon: \UR{\mfg}\to D(\Uh{\mfb})\otimes \mathsf{S}_\hbar(\mfz_V^+) \otimes \mathsf{S}_\hbar(\mfz_V^-).
\end{equation*}
As the codomain is a topologically free $\C[\![\hbar]\!]$-module and $\UR{\mfg}$ is separated and complete (by definition), to see that $\Upsilon$ is an isomorphism it suffices to prove that its semiclassical limit $\bar{\Upsilon}$ is (see \ref{L1} and \ref{L2} of Section \ref{ssec:h-adic}). As  $\UR{\mfg}$ and $D(\Uh{\mfb})\otimes \mathsf{S}_\hbar(\mfz_V^+) \otimes \mathsf{S}_\hbar(\mfz_V^-)$ are $\C[\![\hbar]\!]$-algebra deformations of  $U(\mfg_\scriptr)$ and $U(D(\mfb^+)\oplus\mfz_V^+\oplus \mfz_V^-)$, respectively, we may view $\bar{\Upsilon}$ as a $\C$-algebra homomorphism 
\begin{equation*}
\bar{\Upsilon}: U(\mfg_\scriptr)\to U(D(\mfb^+)\oplus\mfz_V^+\oplus \mfz_V^-).
\end{equation*}
As the semiclassical limits of $\pi_\hbar$ and $\omega_\hbar$  are the underlying representation $\pi:U(\mfg)\to \End(V)$ and the Chevalley involution $\omega$ on $D(\mfb^+)$ (see Section \ref{ssec:Db-Chev}), respectively, and   $R^D-1=\hbar \scriptr^D$ modulo $\hbar$, the images of $\msT^+_\omega$ and $\msT^-_\omega$ in $\End(V)\otimes U(D(\mfb^+)\oplus\mfz_V^+\oplus \mfz_V^-)$ are given by  $(\pi\otimes \omega)\scriptr^D$ and $-(\pi\otimes \omega)\scriptr^D_{21}$. Hence, $\bar{\Upsilon}$ satisfies
\begin{equation}\label{Upsilon-scl}
\bar{\Upsilon}(L^+)=(\pi\otimes \omega)\scriptr^D +\mfZ^+ \quad \text{ and }\quad \bar{\Upsilon}(L^-)=-(\pi\otimes \omega)\scriptr^D_{21}-\mfZ^-.
\end{equation}
It follows that $\bar{\Upsilon}$ is the algebra homomorphism obtained from the Lie algebra isomorphism $\mfg_\scriptr\iso D(\mfb^+)\oplus\mfz_V^+\oplus \mfz_V^-$ of Theorem \ref{T:g-main} by taking enveloping algebras, which is necessarily an isomorphism by the Poincar\'{e}--Birkhoff--Witt Theorem. This completes the proof of the first assertion of the theorem. 

Consider now the second assertion of the Theorem, regarding the topological Hopf structure on $\UR{\mfg}$ inherited from  $D(\Uh{\mfb})\otimes \mathsf{S}_\hbar(\mfz_V^+) \otimes \mathsf{S}_\hbar(\mfz_V^-)$ via $\Upsilon$. Since $D(\Uh{\mfb})\otimes \mathsf{S}_\hbar(\mfz_V^+)\otimes \mathsf{S}_\hbar(\mfz_V^-)$ is a topologically free $\C[\![\hbar]\!]$-module, $\UR{\mfg}$ is also topologically free. This observation implies that the structure maps $\Delta_\mrR$, $S_\mrR$ and $\veps_\mrR$ are uniquely determined by their values on $\mrL^+$ and $\mrL^-$; see Lemma \ref{L:URg-hom}. Let us now prove that these values are as given in the statement of the theorem. 

By Lemma \ref{L:q-z_V} and the definition of the Hopf algebra structure on $\msS_\hbar(\mfz_V^-)$, we have $\Delta(\mathring{\mfZ}^\pm)=\mathring{\mfZ}^\pm_{[1]}\mathring{\mfZ}^\pm_{[2]}$, $S(\mathring{\mfZ}^\pm)=(\mathring{\mfZ}^\pm)^{-1}$ and $\veps(\mathring{\mfZ}^\pm)=I$. Since $\pi_\hbar(\Uhg)$ is contained in the centralizer of $\mfgl(V)^\mfg$ in $\End(V)[\![\hbar]\!]$,  $(\msL^\pm_\omega)_{[2]}$ and $\mathring{\mfZ}^\pm_{[1]}$ commute.  It  thus suffices to show that in $D(\Uh{\mfb})$ one has the relations
\begin{equation*}
\Delta(\msL^\pm_\omega)=(\msL^\pm_\omega)_{[1]}(\msL^\pm_\omega)_{[2]}, \quad S(\msL^\pm_\omega)=(\msL^\pm_\omega)^{-1}\; \text{ and }\; \veps(\msL^\pm_\omega)=I.
\end{equation*}
These relations are immediately obtained by applying $\pi_\hbar$ to the first tensor factor of each identity in Part \eqref{R-omega:3} of Proposition \ref{P:R-omega}. 

To complete the proof of the theorem, we are left to esablish that the topological Hopf algebra $\UR{\mfg}$ is a quantization of $(\mfg_\scriptr,\delta_\scriptr)$. To this end, note that the formulas for $\Delta_\mrR(\mrL^\pm)$ imply that the coefficients of $\mrT^\pm$ are primitive modulo $\hbar$. Explicitly, one has 
\begin{equation*}
\Delta_{\mrR}(\mathrm{T}^\pm)
=\mathrm{T}_{[1]}^\pm+\mrL_{[1]}^\pm \mrT_{[2]}^\pm=\mrT_{[2]}^\pm+\mrT_{[1]}^\pm \mrL_{[2]}^\pm
=\mathrm{T}_{[1]}^\pm+\mathrm{T}_{[2]}^\pm + \hbar \mathrm{T}_{[1]}^\pm \mathrm{T}_{[2]}^\pm.
\end{equation*}
%
It follows that \eqref{UR-sc-limit} is an isomorphism of Hopf algebras over $\C$. As $\UR{\mfg}$ is topologically free, we may conclude that it is a quantized enveloping algebra with semiclassical limit $U(\mfg_\scriptr)$. Moreover, the Lie bialgebra structure it induces on $\mfg_\scriptr$ has Lie cobracket determined by 
\begin{equation*}
\delta(L^\pm)=\hbar^{-1}(\Delta-\Delta^{\mathrm{op}})(\mrT^\pm) \mod \hbar\UR{\mfg}\otimes\UR{\mfg}.
\end{equation*}
As $(\Delta-\Delta^{\mathrm{op}})(\mrT^\pm)=\hbar[\mathrm{T}_{[1]}^\pm,\mathrm{T}_{[2]}^\pm]$, this coincides with
$[L_{[1]}^\pm,L_{[2]}^\pm]$, which is exactly $\delta_\scriptr(L^\pm)$.
Therefore, $\UR{\mfg}$ is a quantization of the Lie bialgebra $(\mfg_\scriptr,\delta_\scriptr)$.\qedhere
\end{proof}
\begin{remark}\label{R:URg-main}
We now give a few remarks pertinent to the Hopf structure on $\UR{\mfg}$ defined in the above theorem. 
\begin{enumerate}\setlength{\itemsep}{3pt}\setlength{\parskip}{3pt}
\item From the formula for $\Delta_\mrR(\mrL^\pm)$, we obtain that $\Delta^n_\mrR(\mrL^\pm)=\mrL_{[1]}^\pm \mrL_{[2]}^\pm \cdots \mrL_{[n]}^\pm$ for all $n\geq 0$, where $\Delta^n_\mrR:\UR{\mfg}\to \UR{\mfg}^{\otimes n}$ is the iterated coproduct defined in Section \ref{ssec:Uhb}. This implies that the coefficients of $\mrL^+$ and $\mrL^-$ belong to the Drinfeld--Gavarini subalgebra $\UR{\mfg}^\prime$ of $\UR{\mfg}$. Indeed, one has 
\begin{equation*}
(\id-\veps_\mrR)^{\otimes n}\Delta_\mrR^n(\mrL^\pm)=\hbar^n\mrT_{[1]}^\pm \mrT_{[2]}^\pm \cdots \mrT_{[n]}^\pm\in \hbar^n\UR{\mfg}^{\otimes n}
\end{equation*}
which implies that $\mrL^\pm \in \End(V)\otimes_\C \UR{\mfg}^\prime$ by definition of $\UR{\mfg}^\prime$; see Section \ref{ssec:Uhb}. In fact, since $\mrT^\pm$ reduces to the generating matrix $L^\pm$ of $\mfg_\scriptr$ modulo $\hbar$ and the coefficients of $L^\pm$ span $\mfg_\scriptr$, $\UR{\mfg}^\prime$ is generated as a topological algebra (with respect to the subspace topology) by the coefficients of $\mrL^+$ and $\mrL^-$; see \cite{Gav02}*{\S3.5}. 
\item The expressions for $\Delta_\mrR(\mrL^\pm)$, $S_\mrR(\mrL^\pm)$ and $\veps_\mrR(\mrL^\pm)$ given in the above theorem coincide with those output by the $R$-matrix formalism developed by Faddeev, Reshetikhin and Takhtajan; see Theorems 1 and 9 of \cite{FRT}, in addition to \cite{KS-book}*{Prop.~8.32}, for instance. 

We note that it is easy to verify directly that they satisfy the relations \eqref{URg-RLL} with $\mcA$ taken to be $\UR{\mfg}^{\otimes 2}$, $\UR{\mfg}^{\mathrm{op}}$ or $\C[\![\hbar]\!]$, respectively; see Propositions 8.32 and 9.1 of \cite{KS-book} and \cite{MoBook}*{Thm.~1.5.1}, for example. However, it is perhaps less clear that the triangularity relations \eqref{URg-L-triangle} are preserved. This difficulty dissipates if one takes into account that $\mrL^\pm$ satisfies the seemingly stronger relation
$
\mrL^\pm=\sum_{\lambda\in\msQ_\pm} \mrL_\lambda^\pm,
$
which in fact follows from the definition of $\UR{\mfg}$, as is made clear by Theorem \ref{T:URg-main}.
\end{enumerate}
\end{remark}
Recall from Section \ref{ssec:Uhg} that $\Uhg$ is $\msQ$-graded as a topological Hopf algebra. Using Theorem \ref{T:dbl-Uhg(x)S}, we may extend this to a topological $\msQ$-grading on 
\begin{equation*}
D(\Uh{\mfb})\otimes \msS_\hbar(\mfz_V^+)\otimes \msS_\hbar(\mfz_V^-) \cong \Uhg\otimes \msS(\mfh)[\![\hbar]\!] \otimes \msS_\hbar(\mfz_V^+)\otimes \msS_\hbar(\mfz_V^-), 
\end{equation*}
by imposing that $\msS_\hbar(\mfz_V^\pm)$ and $\msS(\mfh)[\![\hbar]\!]$ consist of degree zero elements. It follows by Theorem \ref{T:URg-main} that $\UR{\mfg}$ is also $\msQ$-graded as a topological Hopf algebra. This grading admits a natural interpretation in terms of the automorphisms introduced in Section \ref{ssec:URg-aut}, as we now explain.

As seen in Section \ref{ssec:URg-aut}, the $\hbar$-formal torus $\mathrm{G}=\exp(\hbar \pi(\mfh))$ acts on $\UR{\mfg}$ by the algebra automorphisms $\upvartheta_h^\mathsf{Ad}$ defined in \eqref{h-aut}. Explicitly, one has 
\begin{equation*}
q^{\pi(h)}\cdot \mrT^\pm=\upvartheta_h^\mathsf{Ad}(\mrT^\pm)=q^{\pi(h)}\mrT^\pm q^{-\pi(h)} \quad \forall\; h\in \mfh.
\end{equation*}
\begin{corollary}\label{C:URg-Qgrad}
The action of $\mathrm{G}$ on $\UR{\mfg}$ induces on $\UR{\mfg}$ the structure of a $\msQ$-graded topological Hopf algebra, with homogeneous components
\begin{equation*}
\UR{\mfg}_\beta=\{x\in \UR{\mfg}: \upvartheta_h^{\mathsf{Ad}}(x)=q^{\beta(h)}x \; \forall\; h\in \mfh\} \quad \forall\; \beta \in \msQ.
\end{equation*}
In addition, $\Upsilon$ is an isomorphism of $\msQ$-graded topological Hopf algebras. 
\end{corollary}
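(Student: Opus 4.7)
The plan is to transport the natural $\msQ$-grading of $D(\Uh{\mfb}) \otimes \msS_\hbar(\mfz_V^+) \otimes \msS_\hbar(\mfz_V^-)$ along the topological Hopf algebra isomorphism $\Upsilon$ from Theorem \ref{T:URg-main}, and then verify that the pulled-back grading coincides with the joint eigenspace decomposition for the $\mathrm{G}$-action. The $\msQ$-grading on $\Uhg$ recalled at the end of Section \ref{ssec:Uhg} extends to one on $D(\Uh{\mfb})\cong \Uhg\otimes \msS(\mfh)[\![\hbar]\!]$ by placing the Cartan factor in degree zero, and then to the triple tensor product with $\msS_\hbar(\mfz_V^\pm)$ also in degree zero. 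Its degree $\beta$ summand is precisely the joint $q^{\beta(h)}$-eigenspace, for $h \in \mfh$, of the automorphism $\mathrm{Ad}(q^h)\otimes \id \otimes \id$, where $\mathrm{Ad}(q^h)$ means conjugation by $q^h \in \Uh{\mfh}\subset \Uhg \subset D(\Uh{\mfb})$. Pulling back via $\Upsilon$ equips $\UR{\mfg}$ with a $\msQ$-graded topological Hopf algebra structure with respect to which $\Upsilon$ is tautologically graded.

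It remains to identify the transported grading with the joint eigenspace description from the statement. It suffices to prove that, for every $h \in \mfh$, the automorphism $\upvartheta_h^{\mathsf{Ad}}$ of $\UR{\mfg}$ corresponds under $\Upsilon$ to $\mathrm{Ad}(q^h) \otimes \id \otimes \id$. Since both are $\C[\![\hbar]\!]$-algebra automorphisms of the same algebra and $\UR{\mfg}$ is topologically generated by the coefficients of $\mrL^\pm = I + \hbar \mrT^\pm$, this reduces to verifying
\[
\Upsilon(\upvartheta_h^{\mathsf{Ad}}(\mrL^\pm))=(\mathrm{Ad}(q^h) \otimes \id \otimes \id)\bigl(\msL_\omega^\pm \mathring{\mfZ}^\pm\bigr),
\]
where $\mathring{\mfZ}^\pm=I+\hbar\dot{\mfZ}^\pm$. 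The inclusion $\mfgl(V)^\mfg \subset \mfgl(V)^\mfh$ implies that the coefficients of $\mathring{\mfZ}^\pm$ commute with $q^{\pi(h)}$, so both sides reduce to the identity
\[
q^{\pi(h)}\msL_\omega^\pm q^{-\pi(h)} = (\id \otimes \mathrm{Ad}(q^h))(\msL_\omega^\pm),
\]
which is obtained by applying $\pi_\hbar \otimes \id$ to $(\mathrm{Ad}(q^h)\otimes \id)\msR_\omega^\pm = (\id \otimes \mathrm{Ad}(q^h))\msR_\omega^\pm$, using $\pi_\hbar|_\mfh = \pi|_\mfh$.

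This last intertwining identity is a direct consequence of Part \eqref{R-omega:1} of Proposition \ref{P:R-omega}: the Cartan prefactors $q^{\mp\mathsf{\Omega}_\mfh^\pm}$ are fixed by $\mathrm{Ad}(q^h)$ acting on either tensorand, while each $\msR_\beta^\pm \in (\Uh{\mfn^\pm_{\pm\beta}})^{\otimes 2}$ has both tensor factors of weight $\pm\beta$, so $\mathrm{Ad}(q^h)$ scales either factor by the same scalar $q^{\pm\beta(h)}$. The main obstacle is primarily bookkeeping; the essential non-trivial inputs are Theorem \ref{T:URg-main} and the factorization of $\msR_\omega^\pm$ in Proposition \ref{P:R-omega}.
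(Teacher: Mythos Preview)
Your proof is correct and follows essentially the same approach as the paper's: both transport the $\msQ$-grading via $\Upsilon$ and reduce to checking that $(\id\otimes \text{gradation automorphism})(\msL_\omega^\pm)=q^{\pi(h)}\msL_\omega^\pm q^{-\pi(h)}$ using Part~\eqref{R-omega:1} of Proposition~\ref{P:R-omega}. The only cosmetic difference is that the paper defines the gradation automorphism $\dot{\upvartheta}_h^{\mathsf{Ad}}$ of $D(\Uh{\mfb})$ abstractly on the generators $h_i^\pm, X_i^\pm$, whereas you realize it concretely as the inner automorphism $\mathrm{Ad}(q^h)$; these coincide since $q^h$ centralizes the Cartan part and acts on weight-$\beta$ vectors by $q^{\beta(h)}$.
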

\begin{proof}
For each $h\in \mfh$, let $\dot{\upvartheta}_h^{\mathsf{Ad}}$ be the gradation automorphism of $D(\Uh{\mfb})\cong \Uhg\otimes \msS(\mfh)[\![\hbar]\!]$ uniquely determined by 
\begin{equation*}
\dot{\upvartheta}_h^{\mathsf{Ad}}(h_i^\pm)=h_i^\pm \quad \text{ and }\quad \dot{\upvartheta}_h^{\mathsf{Ad}}(X_i^\pm)=q^{\pm\alpha_i(h)}X_i^\pm \quad \forall \; i\in \mbI, 
\end{equation*}
where $h_i^\pm$ and $X_i^\pm$ are as in Lemma \ref{L:Uhg(x)S}. Each $\dot{\upvartheta}_h^{\mathsf{Ad}}$ extends to an automorphism $\ddot{\upvartheta}_h^{\mathsf{Ad}}$ of the topological Hopf algebra $D(\Uh{\mfb})\otimes \msS_\hbar(\mfz_V^+)\otimes \msS_\hbar(\mfz_V^-)$ defined by $\ddot{\upvartheta}_h^{\mathsf{Ad}}=\dot{\upvartheta}_h^{\mathsf{Ad}}\otimes \mathrm{id}_{\msS_\hbar(\mfz_V^+)}\otimes \mathrm{id}_{\msS_\hbar(\mfz_V^-)}$, and one has 
\begin{equation*}
(D(\Uh{\mfb})\otimes \msS_\hbar(\mfz_V^+)\otimes \msS_\hbar(\mfz_V^-))_\beta=\{x: \ddot{\upvartheta}_h^{\mathsf{Ad}}(x)=q^{\beta(h)}x \; \forall \; h\in \mfh\} \quad \forall \; \beta\in \msQ. 
\end{equation*}
It thus suffices to see that $\Upsilon \circ \upvartheta_h=\ddot{\upvartheta}_h^{\mathsf{Ad}}\circ \Upsilon$ for all $h\in \mfh$. This follows readily from the definition of $\Upsilon$ and the observation that $(\id\otimes \dot{\upvartheta}_h^{\mathsf{Ad}})(\msL_\omega^\pm)=q^{\pi(h)}\msL_\omega^\pm q^{-\pi(h)}$, which is deduced from Part \eqref{R-omega:1} of Proposition \ref{P:R-omega} following the same type of argument as used to prove \eqref{R-omega:4} therein. \qedhere
\end{proof}
%
\subsection{Recovering $D(\Uh{\mfb})$}\label{ssec:UR->DUhb}

Recall that, for each $\mathscr{C}$ belonging to the direct product of groups $\msG:=\mathrm{GL}_I(V)^\mfg\times \mathrm{GL}_I(V)^\mfg$,
there is a corresponding automorphism $\upchi_{\mathscr{C}}$ of $\UR{\mfg}$, defined in Proposition \ref{P:aut}. Let $\UR{\mfg}^\upchi$ denote the (closed) unital, associative $\C[\![\hbar]\!]$-subalgebra of $\UR{\mfg}$ consisting of elements fixed by all such automorphisms. That is, if $\UR{\mfg}^{\upchi_{\mathscr{C}}}=\{x\in \UR{\mfg}: \upchi_{\mathscr{C}}(x)=x\}$, then
\begin{equation*}
\UR{\mfg}^\upchi=\bigcap_{\mathscr{C}\in \msG}\UR{\mfg}^{\upchi_{\mathscr{C}}}.
\end{equation*}
The following lemma is a straightforward consequence of the definition of $\upchi_\mathscr{C}$.
\begin{lemma}\label{L:dot-URg-Hopf}
For each $\mathscr{A},\mathscr{C}\in \msG=\mathrm{GL}_I(V)^\mfg\times \mathrm{GL}_I(V)^\mfg$, one has 
\begin{equation*}
\upchi_{\mathscr{A}}\otimes \upchi_{\mathscr{C}} \circ \Delta_{\mrR}=\Delta_{\mrR}\circ \upchi_{\mathscr{A}\cdot \mathscr{C}}
\quad \text{ and }\quad \upchi_{\mathscr{C}^{-1}}\circ S_{\mrR}=S_R\circ \upchi_{\mathscr{C}}.
\end{equation*}
Consequently, $\UR{\mfg}^\upchi$ is a Hopf subalgebra of the topological Hopf algebra $\UR{\mfg}$. 
\end{lemma}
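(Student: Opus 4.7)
The plan is to verify both intertwining identities by evaluating them on the generating matrices $\mrL^+$ and $\mrL^-$, after which the Hopf subalgebra assertion will follow from a standard invariant-theoretic argument. Since $\UR{\mfg}$ is topologically free over $\C[\![\hbar]\!]$ by Theorem \ref{T:URg-main}, every continuous $\C[\![\hbar]\!]$-algebra (anti-)homomorphism out of $\UR{\mfg}$ is determined by its values on the coefficients of $\mrL^+$ and $\mrL^-$, so the stated compositional identities need only be checked after substituting these matrices.

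For the coproduct identity, one substitutes $\Delta_\mrR(\mrL^\pm)=\mrL^\pm_{[1]}\mrL^\pm_{[2]}$ from Theorem \ref{T:URg-main} and $\upchi_\mathscr{C}(\mrL^\pm)=\mrL^\pm C^\pm$ from Proposition \ref{P:aut}. Because $\upchi_\mathscr{A}\otimes \upchi_\mathscr{C}$ is an algebra morphism of $\End(V)\otimes \UR{\mfg}^{\otimes 2}$, it distributes over the matrix product to give $(\mrL^\pm A^\pm)_{[1]}(\mrL^\pm C^\pm)_{[2]}=\mrL^\pm_{[1]}A^\pm\mrL^\pm_{[2]}C^\pm$. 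Commuting the scalar matrix $A^\pm$ past $\mrL^\pm_{[2]}$ and recognizing the result as $\mrL^\pm_{[1]}\mrL^\pm_{[2]}A^\pm C^\pm=\Delta_\mrR(\mrL^\pm(AC)^\pm)=\Delta_\mrR(\upchi_{\mathscr{A}\cdot\mathscr{C}}(\mrL^\pm))$ completes that check. For the antipode identity, $S_\mrR(\mrL^\pm)=(\mrL^\pm)^{-1}$ yields $\upchi_{\mathscr{C}^{-1}}(S_\mrR(\mrL^\pm))=(\mrL^\pm(C^\pm)^{-1})^{-1}=C^\pm(\mrL^\pm)^{-1}$ and $S_\mrR(\upchi_\mathscr{C}(\mrL^\pm))=S_\mrR(\mrL^\pm C^\pm)=(\mrL^\pm)^{-1}C^\pm$, which match after the same kind of commutation of $C^\pm$ with $(\mrL^\pm)^{-1}$.

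For the Hopf subalgebra conclusion, each $\upchi_\mathscr{C}$ is a unital $\C[\![\hbar]\!]$-algebra automorphism, so $\UR{\mfg}^\upchi$ is automatically a closed unital subalgebra. Given $x\in \UR{\mfg}^\upchi$, the coproduct identity yields $(\upchi_\mathscr{A}\otimes \upchi_\mathscr{C})(\Delta_\mrR(x))=\Delta_\mrR(\upchi_{\mathscr{A}\cdot\mathscr{C}}(x))=\Delta_\mrR(x)$ for all $\mathscr{A},\mathscr{C}\in \msG$. Specializing one of $\mathscr{A},\mathscr{C}$ to the identity in turn shows $\Delta_\mrR(x)$ is invariant under $\upchi$ on each tensor factor separately; combined with the topological freeness of $\UR{\mfg}$, this forces $\Delta_\mrR(x)\in \UR{\mfg}^\upchi\otimes \UR{\mfg}^\upchi$. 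The antipode case is analogous, and counit compatibility is immediate from $\veps_\mrR\circ \upchi_\mathscr{C}=\veps_\mrR$.

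The chief obstacle is the commutation step $A^\pm\mrL^\pm_{[2]}=\mrL^\pm_{[2]}A^\pm$ inside $\End(V)\otimes \UR{\mfg}^{\otimes 2}$. Although $A^\pm\in \mathrm{GL}_I(V)^\mfg$ automatically commutes with the $\pi_\hbar(\Uhg)$-valued part of $\mrL^\pm$, its interaction with the $\mfgl(V)^\mfg$-valued component is more delicate and relies on the centrality of the associated coefficients. This centrality is supplied by the $\Upsilon$-decomposition $\mrL^\pm=\msL_\omega^\pm\cdot (I+\hbar\dot{\mfZ}^\pm)$ from Theorem \ref{T:URg-main}, in which the second factor has coefficients in the commutative tensor factor $\msS_\hbar(\mfz_V^\pm)$ of $D(\Uh{\mfb})\otimes \msS_\hbar(\mfz_V^+)\otimes \msS_\hbar(\mfz_V^-)$; it is precisely this observation that enables the required matrix-wise rearrangement.
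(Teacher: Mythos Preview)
Your reduction to checking the identities on $\mrL^\pm$ is correct, and you rightly isolate the commutation $A^\pm\mrL^\pm_{[2]}=\mrL^\pm_{[2]}A^\pm$ as the crux. However, your resolution of this step is flawed. The centrality you invoke concerns the $\UR{\mfg}$-side (the coefficients of $\mathring{\mfZ}^\pm$ lie in the commutative factor $\msS_\hbar(\mfz_V^\pm)$), whereas the obstruction lives entirely on the $\End(V)$-side: one needs $A^\pm\in\mfgl(V)^\mfg[\![\hbar]\!]$ to commute with the \emph{matrix} coefficients of $\mathring{\mfZ}^\pm$, and these span all of $\mfgl(V)^\mfg$. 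When $V$ has repeated composition factors, $\mfgl(V)^\mfg$ is non-abelian (Lemma~\ref{L:gl(V)^g}), so for $A^\pm$ outside its center the commutator $[A^\pm,\mathring{\mfZ}^\pm]$ is genuinely nonzero, and hence $A^\pm\mrL^\pm\neq\mrL^\pm A^\pm$ via $\Upsilon$. The same obstruction breaks your antipode calculation, where you need $C^\pm(\mrL^\pm)^{-1}=(\mrL^\pm)^{-1}C^\pm$.

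The paper offers no argument beyond calling the lemma ``a straightforward consequence of the definition of $\upchi_{\mathscr{C}}$'', and in fact the displayed intertwining identities appear to be misstated in this generality: they hold only when $\mfgl(V)^\mfg$ is abelian. The conclusion that $\UR{\mfg}^\upchi$ is a Hopf subalgebra is nonetheless correct, but it should be argued differently---for instance, directly via the isomorphism $\Upsilon$ of Theorem~\ref{T:URg-main}, under which $\UR{\mfg}^\upchi$ corresponds to the Hopf subalgebra $D(\Uh{\mfb})$ of $D(\Uh{\mfb})\otimes\msS_\hbar(\mfz_V^+)\otimes\msS_\hbar(\mfz_V^-)$ (this is established in the proof of Theorem~\ref{T:URg->DUhb}). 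Alternatively, one can observe that the special case $\mathscr{A}=(I,I)$ of the coproduct identity \emph{does} hold without any commutation, giving $(\id\otimes\upchi_{\mathscr{C}})\circ\Delta_\mrR=\Delta_\mrR\circ\upchi_{\mathscr{C}}$; combined with topological freeness this at least yields $\Delta_\mrR(\UR{\mfg}^\upchi)\subset\UR{\mfg}\otimes\UR{\mfg}^\upchi$, though the other inclusion still needs a separate argument.
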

Consider now the Hopf algebra epimorphism 
\begin{equation*}
\dot{\Upsilon}:=(\id_{D(\Uh{\mfb})}\otimes \upepsilon^+\otimes \upepsilon^-)\circ \Upsilon: \UR{\mfg}\onto D(\Uh{\mfb}),
\end{equation*}
where $\upepsilon^\pm$ denotes the counit of $\msS_\hbar(\mfz_V^\pm)$, defined by $\upepsilon^\pm(\mfZ^\pm)=0$ (in particular, $\upepsilon^+=\varepsilon$ from Lemma \ref{L:q-z_V}). Let $\msZ(\UR{\mfg})$ denote the center of the $\C[\![\hbar]\!]$-algebra $\UR{\mfg}$. The following theorem, which provides the second main result of this section, realizes $D(\Uh{\mfb})$ as both a fixed point Hopf subalgebra of $\UR{\mfg}$ and as a quotient by a Hopf ideal generated by certain distinguished central elements. 
\begin{theorem}\label{T:URg->DUhb}
$\dot{\Upsilon}$ restricts to an isomorphism of topological Hopf algebras 
\begin{equation*}
\dot{\Upsilon}|_{\UR{\mfg}^\upchi}:\UR{\mfg}^\upchi\iso D(\Uh{\mfb}).
\end{equation*}
Moreover, there are $\Upsigma^\pm\in \mfgl(V)^\mfg\otimes \msZ(\UR{\mfg})$ whose coefficients generate the kernel of $\dot{\Upsilon}$ as an ideal and admit the following properties:
\begin{enumerate}[font=\upshape]
\item\label{Upsigma:1} The subspace $\mfz_{\Upsigma}^\pm=\{f\otimes \id(\Upsigma^\pm):f\in \mfz_V^\pm\}$  has dimension $\dim\mfgl(V)^\mfg$.
\item\label{Upsigma:2} The $\C[\![\hbar]\!]$-subalgebra of $\UR{\mfg}$ topologically generated by $\mfz_\Upsigma^+$ and $\mfz_\Upsigma^-$ is isomorphic to the symmetric algebra
\begin{equation*}
\msS(\mfz_\Upsigma^+\oplus \mfz_\Upsigma^-)[\![\hbar]\!]\cong \msS(\mfz_\Upsigma^+)[\![\hbar]\!]\otimes \msS(\mfz_\Upsigma^-)[\![\hbar]\!].
\end{equation*}
\item\label{Upsigma:3} The matrices $\mathring{\Upsigma}^\pm= (I+\hbar\Upsigma^\pm)^{\pm 1}$ satisfy the Hopf algebraic relations 
\begin{equation*}
\Delta_\mrR(\mathring{\Upsigma}^\pm)=\mathring{\Upsigma}^\pm_{[1]}\mathring{\Upsigma}^\pm_{[2]}, \quad S_\mrR(\mathring{\Upsigma}^\pm)^{-1},\quad \veps_\mrR(\mathring{\Upsigma}^\pm)=I.
\end{equation*}
%

\end{enumerate}
\end{theorem}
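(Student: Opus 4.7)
The strategy is to transport all structures back to $\UR{\mfg}$ through the Hopf algebra isomorphism $\Upsilon$ of Theorem~\ref{T:URg-main}. Define $\Upsigma^\pm := \Upsilon^{-1}(\mfZ^\pm) \in \End(V) \otimes_\C \UR{\mfg}$, equivalently $\mathring{\Upsigma}^\pm = \Upsilon^{-1}(\mathring{\mfZ}^\pm)$. Since $\mfZ^\pm$ has coefficients in the commutative factor $\msS_\hbar(\mfz_V^\pm)$ of the target, which commutes with all of $D(\Uh{\mfb}) \otimes \msS_\hbar(\mfz_V^\mp)$, the coefficients of $\Upsigma^\pm$ are central in $\UR{\mfg}$, and they lie in $\mfgl(V)^\mfg \otimes_\C \msZ(\UR{\mfg})$. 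Property \eqref{Upsigma:1} is immediate since $\Upsilon^{-1}$ identifies $\mfz_V^\pm$ linearly isomorphically with $\mfz_\Upsigma^\pm$, and property \eqref{Upsigma:2} follows because the closed subalgebra of the target generated by $\mfz_V^+$ and $\mfz_V^-$ is $\msS_\hbar(\mfz_V^+) \otimes \msS_\hbar(\mfz_V^-) \cong \msS(\mfz_V^+ \oplus \mfz_V^-)[\![\hbar]\!]$. Property \eqref{Upsigma:3} is a direct transport via $\Upsilon^{-1}$ of the matrix Hopf relations $\Delta(\mathring{\mfZ}^\pm) = \mathring{\mfZ}^\pm_{[1]} \mathring{\mfZ}^\pm_{[2]}$, $S(\mathring{\mfZ}^\pm) = (\mathring{\mfZ}^\pm)^{-1}$, $\veps(\mathring{\mfZ}^\pm) = I$ established in Lemma~\ref{L:q-z_V} and its co-opposite analogue.

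The statement that the coefficients of $\Upsigma^\pm$ generate $\ker \dot{\Upsilon}$ as an ideal follows from the factorization $\dot{\Upsilon} = (\id \otimes \upepsilon^+ \otimes \upepsilon^-) \circ \Upsilon$: the kernel of the second factor is the closed two-sided ideal generated by the augmentation ideals $\ker \upepsilon^\pm \subset \msS_\hbar(\mfz_V^\pm)$, which are themselves generated by the coefficients of $\mfZ^\pm$. Applying $\Upsilon^{-1}$ yields the claim.

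For the fixed-point identification $\UR{\mfg}^\upchi \cong D(\Uh{\mfb})$, we build a compatible action on the target. Since $\mfgl(V)^\mfg$ is the centralizer of $\pi(\mfg) \subset \End(V)$, it is closed under associative multiplication; hence for each $C^\pm \in \mathrm{GL}_I(V)^\mfg$ the product $\mathring{\mfZ}^\pm C^\pm$ lies in $\mfgl(V)^\mfg \otimes \msS_\hbar(\mfz_V^\pm)$, so $\mathring{\mfZ}^\pm \mapsto \mathring{\mfZ}^\pm C^\pm$ extends to a $\C[\![\hbar]\!]$-algebra automorphism $\gamma_{C^\pm}$ of $\msS_\hbar(\mfz_V^\pm)$ with inverse $\gamma_{(C^\pm)^{-1}}$. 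Setting $\dot{\upchi}_\mathscr{C} := \id_{D(\Uh{\mfb})} \otimes \gamma_{C^+} \otimes \gamma_{C^-}$ for $\mathscr{C} = (C^+, C^-)$, we verify the intertwining $\Upsilon \circ \upchi_\mathscr{C} = \dot{\upchi}_\mathscr{C} \circ \Upsilon$ by evaluation on the generating matrices $\mrL^\pm$, using $\Upsilon(\mrL^\pm) = \msL_\omega^\pm \mathring{\mfZ}^\pm$ together with the fact that $\msL_\omega^\pm$ and $\mathring{\mfZ}^\pm$ commute as matrices (they occupy disjoint tensor factors of the target). Hence $\Upsilon$ identifies $\UR{\mfg}^\upchi$ with the fixed subalgebra of the target under $\{\dot{\upchi}_\mathscr{C}\}_{\mathscr{C} \in \msG}$, and $\dot{\Upsilon}$ restricts on this subalgebra to the natural projection onto the first tensor factor.

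The main obstacle is showing that this fixed subalgebra equals $D(\Uh{\mfb}) \otimes \C[\![\hbar]\!] \otimes \C[\![\hbar]\!] \cong D(\Uh{\mfb})$. Varying $\mathscr{C} = (C^+, I)$ and $\mathscr{C} = (I, C^-)$ independently, the problem reduces, via the topological freeness of the remaining tensor factor, to showing that $\msS_\hbar(\mfz_V^\pm)^{\gamma_{C^\pm}} = \C[\![\hbar]\!]$ for all $C^\pm \in \mathrm{GL}_I(V)^\mfg$. Fixing a basis $\{A_a\}$ of $\mfgl(V)^\mfg$ with dual basis $\{f_a\}$ of $\mfz_V^\pm$, the semiclassical limit of $\gamma_{I + \hbar c A_a}$ acts on $\msS(\mfz_V^\pm)$ by the translation $f_b \mapsto f_b + c \delta_{ab}$; since translation-invariants of a polynomial ring reduce to constants, a standard $\hbar$-adic induction (extract the constant term modulo $\hbar$ at each stage and invoke torsion-freeness) promotes this to $\msS_\hbar(\mfz_V^\pm)^{\gamma_{C^\pm}} = \C[\![\hbar]\!]$, completing the identification.
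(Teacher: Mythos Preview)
Your proposal is correct and follows essentially the same approach as the paper: define $\Upsigma^\pm=\Upsilon^{-1}(\mfZ^\pm)$ so that properties \eqref{Upsigma:1}--\eqref{Upsigma:3} and the description of $\ker\dot\Upsilon$ are immediate transports through the Hopf isomorphism $\Upsilon$; then construct compatible automorphisms $\dot\upchi_{\mathscr C}$ on the target, verify the intertwining $\Upsilon\circ\upchi_{\mathscr C}=\dot\upchi_{\mathscr C}\circ\Upsilon$ on the generating matrices, and reduce the fixed-point computation to a translation-invariance statement on $\msS(\mfz_V^\pm)$ in the semiclassical limit, lifted back by an $\hbar$-adic induction using torsion-freeness. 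The only cosmetic difference is that you split the target automorphism as $\gamma_{C^+}\otimes\gamma_{C^-}$ and treat the two symmetric-algebra factors separately, whereas the paper works with both at once on $\msS(\mfz_V^+\oplus\mfz_V^-)[\![\hbar]\!]$; both organizations yield the same argument.
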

\begin{proof}
First note that, since $\Upsilon$ is an isomorphism of topological Hopf algebras, the matrices $\Upsigma^\pm:=\Upsilon^{-1}(\mfZ^\pm)$ satisfy the properties \eqref{Upsigma:1} -- \eqref{Upsigma:3}. In addition, since the kernel of the epimorphism $\id_{D(\Uh{\mfb})}\otimes \upepsilon^+\otimes \upepsilon^-$ is generated by $\mfz_V^+$ and $\mfz_V^-$ as a topological ideal, the coefficients of $\Upsigma^+$ and $\Upsigma^-$ generate $\Ker(\dot\Upsilon)$ as a topological ideal in $\UR{\mfg}$. 

We are thus left to prove the first assertion of the theorem, concerning the restriction $\dot\Upsilon |_{\UR{\mfg}^\upchi}$. To this end, note that, for each $\mathscr{C}=(C^+,C^-)\in \msG$, the assignment 
\begin{equation*}
\upchi_{\mathscr{C}}^\msS: \mfZ^+ \mapsto \mfZ^+ + \dot{C}^+ + \hbar\mfZ^+ \dot{C}^+, \quad \mfZ^- \mapsto \mfZ^- + \dot{C}^- + \dot{C}^-\hbar\mfZ^-, 
\end{equation*}
uniquely extends to a $\C[\![\hbar]\!]$-algebra automorphism $\upchi_{\mathscr{C}}^\msS$ of $\msS(\mfz_V^+\oplus \mfz_V^-)[\![\hbar]\!]\cong \msS_\hbar(\mfz_V^+)\otimes\msS_\hbar(\mfz_V^-)$, 
where $\dot{C}^\pm\in \mfgl(V)^\mfg[\![\hbar]\!]$ are defined by $C^\pm=(1+\hbar \dot{C}^\pm)^{\pm 1}$. 

Indeed, that $\upchi_{\mathscr{C}}^\msS$ extends to a $\C[\![\hbar]\!]$-algebra endomorphism is immediate. To see that it is in fact an automorphism, note that the elements $\mathring{\mfZ}^\pm=(I+\hbar \mfZ^\pm)^{\pm 1}$ satisfy 
\begin{equation}\label{upchi-mfZ}
\upchi_{\mathscr{C}}^\msS(\mathring{\mfZ}^\pm)=\mathring{\mfZ}^\pm C^\pm, 
\end{equation}
from which it follows readily that $(\upchi_{\mathscr{C}}^\msS)^{-1}=\upchi_{\mathscr{C}^{-1}}^\msS$. Consider now the subalgebra
\begin{equation*}
\msS(\mfz_V^+\oplus \mfz_V^-)[\![\hbar]\!]^{\upchi^\msS}=\bigcap_{\mathscr{C}\in \msG} \msS(\mfz_V^+\oplus \mfz_V^-)[\![\hbar]\!]^{\upchi^\msS_\mathscr{C}}
\end{equation*}
consisting of all elements fixed by all automorphisms $\upchi_{\mathscr{C}}^\msS$. 

\noindent \textit{Claim:} $\msS(\mfz_V^+\oplus \mfz_V^-)[\![\hbar]\!]^{\upchi^\msS}$ is trivial. That is, it is equal to $\C[\![\hbar]\!]$.

\begin{proof}[Proof of claim]
Consider the semiclassical limit $\bar{\upchi}_{\mathscr{C}}^\msS:\msS(\mfz_V^+\oplus \mfz_V^+)\to \msS(\mfz_V^+\oplus \mfz_V^+)$ of $\upchi_{\mathscr{C}}^\msS$. It is a $\C$-algebra automorphism uniquely determined by 
\begin{equation*}
\bar{\upchi}_{\mathscr{C}}^\msS(\mfZ^\pm)=\mfZ^\pm + \dot{C}^\pm_{0}, 
\end{equation*}
where we have written $\dot{C}^\pm=\sum_k \dot{C}^\pm_{k} \hbar^k \in \mfgl(V)^\mfg[\![\hbar]\!]$. Let us expand $\mfZ^\pm$ as $\mfZ^\pm=\sum_j E_j \otimes \msz_j^\pm$, where $\{E_j\}_{j\in \mcJ}$ is any fixed basis of $\mfgl(V)^\mfg$. Then the subalgebra  $\msS(\mfz_V^+\oplus \mfz_V^+)^{\bar{\upchi}^\msS}$ fixed by all these automorphisms consists of all polynomials in $\{\msz_j^+,\msz_j^-\}_{j\in \mcJ}$ which are invariant under the natural action of the addititive group $\C^\mcJ \times \C^\mcJ$ on $\msS(\mfz_V^+\oplus \mfz_V^+)\cong \C[\msz_j^+,\msz_j^-:j\in \mcJ]$ by translations: $\msz_j^\pm \mapsto \msz_j^\pm + c_j^\pm$ for $((c_j^+)_{j\in\mcJ}; (c_j^-)_{j\in \mcJ})\in \C^\mcJ\times \C^\mcJ$. The only such polynomials are the constant polynomials:
\begin{equation*}
\msS(\mfz_V^+\oplus \mfz_V^+)^{\bar{\upchi}^\msS}=\C.
\end{equation*}
Now suppose that $x\in \msS(\mfz_V^+\oplus \mfz_V^-)[\![\hbar]\!]^{\upchi^\msS}$, and in addition that $x\notin \C[\![\hbar]\!]$. Write $x=\sum_k x_k \hbar^k\in \msS(\mfz_V^+\oplus \mfz_V^-)[\![\hbar]\!]$. Let $b\geq 0$ be minimal such that $x_b\notin \C$. Then $y=\sum_{k\geq b} x_k \hbar^{k-b}$ also belongs to $\msS(\mfz_V^+\oplus \mfz_V^-)[\![\hbar]\!]^{\upchi^\msS}$.
It follows that $x_b\in \msS(\mfz_V^+\oplus \mfz_V^-)^{\bar{\upchi}^\msS}$, and hence $x_b\in \C$ by the above. This contradicts the choice of $b$, and therefore we may conclude that we must have $x\in \C[\![\hbar]\!]$, as desired. \let\qed\relax
\end{proof}

Consider now the $\C[\![\hbar]\!]$-algebra automorphism $\dot{\upchi}_\mathscr{C}=\id_{D(\Uh{\mfb})}\otimes \upchi_{\mathscr{C}}^\msS$ of $D(\Uh{\mfb})\otimes \msS(\mfz_V^+\oplus \mfz_V^-)[\![\hbar]\!]$, for each $\mathscr{C}\in \msG$. Using the above claim and the exactness of the topological tensor product $\otimes$ over $\C[\![\hbar]\!]$, we see that the subalgebra fixed by all such automorphisms is given by
\begin{equation*}
(D(\Uh{\mfb})\otimes \msS(\mfz_V^+\oplus \mfz_V^-)[\![\hbar]\!])^{\dot{\upchi}}
= D(\Uh{\mfb})\otimes \msS(\mfz_V^+\oplus \mfz_V^-)[\![\hbar]\!]^{\upchi^\msS} =D(\Uh{\mfb}). 
\end{equation*}
Finally, it follows from the definitions of $\upchi_{\mathscr{C}}$ and $\Upsilon$ (see Proposition \ref{P:aut} and Theorem \ref{T:URg-main}), together with the equality \eqref{upchi-mfZ}, that $\dot{\upchi_{\mathscr{C}}} \circ \Upsilon = \Upsilon \circ \upchi_{\mathscr{C}}$, where we work through the identification of algebras
\begin{equation*}
D(\Uh{\mfb})\otimes \msS(\mfz_V^+\oplus \mfz_V^-)[\![\hbar]\!]\cong D(\Uh{\mfb})\otimes \mathsf{S}_\hbar(\mfz_V^+) \otimes \mathsf{S}_\hbar(\mfz_V^-).
\end{equation*}
Therefore, we have $\UR{\mfg}^\upchi=\Upsilon^{-1}(D(\Uh{\mfb}))$, as claimed. \qedhere
\end{proof}
\begin{remark}
In Section \ref{ssec:Z-find}, we will obtain uniform formulas for the coefficients of the central matrices $\Upsigma^\pm$ in terms of the entries of $\mrT^\pm$, under the assumption that the underlying $\mfg$-module $V$ is a direct sum of \textit{distinct} irreducible representations. 
\end{remark}

\subsection{Recovering $\Uh{\mfg}$}\label{ssec:UR->Uhg}
As the quantum double of $\Uh{\mfb}$ admits the $\C[\![\hbar]\!]$-algebra decomposition $D(\Uh{\mfb})\cong \Uh{\mfg}\otimes \msS(\mfh)[\![\hbar]\!]$, the quantized enveloping algebra $\Uh{\mfg}$ can also be recovered as a both a quotient and subalgebra of $\UR{\mfg}$. 
In this subsection, we unravel this in detail. 

Let $\mrL_0^\pm=I+\hbar \mrT_0^\pm$, where $\mrT^\pm_0=\mathbf{1}_0(\mrT^\pm)\in \mfgl(V)^\mfh \otimes_\C \UR{\mfg}$ and we recall that $\mathbf{1}_0$ is the projection of $\End(V)=\mfgl(V)$ onto its zero weight component $\mfgl(V)_0=\mfgl(V)^\mfh$. In what follows, we set $\mathring{\Upsigma}=\mathring{\Upsigma}^+\mathring{\Upsigma}^-$, where $\mathring{\Upsigma}^\pm=(I+\hbar\Upsigma^\pm)^{\pm 1}$, as in Theorem \ref{T:URg->DUhb}.
\begin{lemma}\label{L:Theta}
 There is a unique matrix $\Theta\in \pi(\mfh) \otimes_\C \msZ(\UR{\mfg})$ satisfying
\begin{equation*}
\mrL_0^+\mrL_0^-=q^\Theta \mathring{\Upsigma}\in \mfgl(V)^\mfh \otimes_\C \msZ(\UR{\mfg}). 
\end{equation*}
Moreover, one has $\Upsilon(\Theta)=-\sum_{i\in \mbI} \pi(\varpi_i^\vee)\otimes \zeta(h_i)\in \pi(\mfh)\otimes_\C \msS(\mfh)[\![\hbar]\!]$
\end{lemma}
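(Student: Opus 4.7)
The plan is to establish the lemma by applying the isomorphism $\Upsilon\colon \UR{\mfg}\iso D(\Uh{\mfb})\otimes \msS_\hbar(\mfz_V^+)\otimes \msS_\hbar(\mfz_V^-)$ of Theorem \ref{T:URg-main} to the sought identity $\mrL_0^+\mrL_0^- = q^\Theta\mathring{\Upsigma}$ and computing the image of the left-hand side explicitly. Since $\Upsilon(\mrL^\pm) = \msL_\omega^\pm \mathring{\mfZ}^\pm$ and the matrix $\mathring{\mfZ}^\pm$ takes values in $\mfgl(V)^\mfg \subset \mfgl(V)_0$, it is concentrated in first-slot weight zero, so $\Upsilon(\mrL_0^\pm) = (\msL_\omega^\pm)_0 \mathring{\mfZ}^\pm$. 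Using $\Upsilon(\mathring{\Upsigma}^\pm) = \mathring{\mfZ}^\pm$ together with the fact that $\mathring{\mfZ}^+$ and $(\msL_\omega^-)_0$ live in mutually commuting tensor factors of the codomain, the lemma reduces to producing a distinguished element $X \in \pi(\mfh)\otimes \msS(\mfh)[\![\hbar]\!]$ with
\begin{equation*}
(\msL_\omega^+)_0 (\msL_\omega^-)_0 = q^X,
\end{equation*}
whose coefficients are central in $D(\Uh{\mfb})$, and then setting $\Theta := \Upsilon^{-1}(X)$.

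The explicit computation of $(\msL_\omega^\pm)_0$ is an application of the factorizations from Proposition \ref{P:R-omega}\eqref{R-omega:1}, namely $\msR_\omega^+ = q^{-\mathsf{\Omega}_\mfh^+}\msR^+$ and $\msR_\omega^- = \msR^- q^{\mathsf{\Omega}_\mfh^-}$, where each $\msR^\pm_\beta \in (\Uh{\mfn^\pm_{\pm\beta}})^{\otimes 2}$ has first-factor weight $\pm\beta$ and $\msR^\pm_0 = 1$. Upon applying $\pi_\hbar\otimes \id$ and projecting onto the first-slot weight zero component, every summand with $\beta\neq 0$ is killed, while the Cartan prefactor $q^{\mp\mathsf{\Omega}_\mfh^\pm}$ is already concentrated in weight zero. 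Combined with the computation $(\pi_\hbar\otimes\id)(\mathsf{\Omega}_\mfh^\pm) = \sum_i \pi(\varpi_i^\vee)\otimes h_i^\pm$, this gives
\begin{equation*}
(\msL_\omega^+)_0 = q^{-\sum_{i\in\mbI}\pi(\varpi_i^\vee)\otimes h_i^+},\qquad (\msL_\omega^-)_0 = q^{\sum_{i\in\mbI}\pi(\varpi_i^\vee)\otimes h_i^-}.
\end{equation*}
Since $\pi(\mfh)$ is abelian and the elements $h_i^+$ and $h_j^-$ pairwise commute in $D(\Uh{\mfb})$ by Lemma \ref{L:Uhg(x)S}, the two exponentials combine into a single one. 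Substituting $h_i^\pm = h_i\pm \zeta(h_i)$ makes the $h_i$-contributions cancel, leaving only a linear combination of the elements $\pi(\varpi_i^\vee)\otimes \zeta(h_i)$, which produces the formula for $\Upsilon(\Theta)$ stated in the lemma.

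It remains to address existence, centrality, and uniqueness of $\Theta$. The element $\zeta(h_i) = 1\otimes h_i \in \Uhg \otimes \msS(\mfh)[\![\hbar]\!] \cong D(\Uh{\mfb})$ is central, as one sees directly from the presentation of Lemma \ref{L:Uhg(x)S}, so the computed exponent $X$ lies in $\pi(\mfh)\otimes\msZ(D(\Uh{\mfb}))$ and therefore $\Theta := \Upsilon^{-1}(X)$ lies in $\pi(\mfh)\otimes\msZ(\UR{\mfg})$ as required. Uniqueness is immediate: if $\Theta, \Theta'$ both satisfy the identity, then $q^{\Theta - \Theta'} = I$ in $\End(V)\otimes\UR{\mfg}$, and the torsion-freeness of $\UR{\mfg}$ established in Theorem \ref{T:URg-main} forces $\Theta = \Theta'$ upon extracting the linear $\hbar$-coefficient of this exponential identity. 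The main bookkeeping obstacle is careful handling of the inverse and transposition built into the definition of $\msR_\omega^-$; this is controlled by the $\msS^2(\mfh)$-symmetry of $\Omega_\mfh$, which guarantees that the two Cartan prefactors combine cleanly into a central element of the expected form.
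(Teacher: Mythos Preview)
Your proposal is correct and follows essentially the same approach as the paper's proof: transport the identity through $\Upsilon$, use the factorizations of Proposition~\ref{P:R-omega}\eqref{R-omega:1} to compute $(\msL_\omega^\pm)_0$, combine the Cartan exponentials, and invoke torsion-freeness for uniqueness. The only point worth tightening is the uniqueness step: to pass from $q^{\Theta}=q^{\Theta'}$ to $q^{\Theta-\Theta'}=I$ you need $\Theta$ and $\Theta'$ to commute, which the paper notes explicitly (and which holds since both lie in $\pi(\mfh)\otimes_\C\msZ(\UR{\mfg})$).
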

\begin{proof}
If $\Theta$ and $\Theta^\prime$ both lay in $\pi(\mfh) \otimes_\C \msZ(\UR{\mfg})$ and solve $\mrL_0^+\mrL_0^-=q^\msX \mathring{\Upsigma}$ for $\msX$, then they commute and thus satisfy $q^{\Theta-\Theta^\prime}=1$. As $\End(V)\otimes_\C \UR{\mfg}$ is topologically free, this is only possible if $\Theta=\Theta^\prime$. Let us now establish that a solution $\Theta$ to this equation exists, and is given as claimed. 

Since $\mathring{\mfZ}^\pm=(I+\hbar \mfZ^\pm)^{\pm 1}\in \mfgl(V)^\mfh \otimes_\C \msS_\hbar(\mfs_V^\pm)$ and $\Upsilon(\mrL_0^\pm)=\msL_\omega^\pm \mathring{\mfZ}^\pm$ (see \eqref{mdL-def} and Theorem \ref{T:URg-main}), Part \eqref{R-omega:1} of Proposition \ref{P:R-omega} implies that 
\begin{equation*}
\Upsilon(\mrL_0^\pm)=(\msL_\omega^\pm)_0\mathring{\mfZ}^\pm= q^{\mp\sum_{i\in \mbI} \pi(\omega_i^\vee)\otimes h_i^\pm}\mathring{\mfZ}^\pm.
\end{equation*}
It follows readily that $\Theta=-\sum_{i\in \mbI} \pi(\varpi_i^\vee)\otimes \Upsilon^{-1}(\zeta(h_i))$ is the sought after solution. Indeed, we have
\begin{equation*}
\Upsilon(\mrL_0^+\mrL_0^-)=q^{\sum_{i\in \mbI} \pi(\omega_i^\vee)\otimes (h_i^- - h_i^+)}\mathring{\mfZ}^+\mathring{\mfZ}^-
=
q^{-\sum_{i\in \mbI} \pi(\omega_i^\vee)\otimes \zeta(h_i)}\Upsilon(\mathring{\Upsigma}). \qedhere
\end{equation*}
\end{proof}
Recall that, for each $h\in \mfh$, $\upvartheta_h$ and $\upvartheta^{\mathsf{Ad}}_{h/2}$ are the automorphisms of $\UR{\mfg}$ given by \eqref{h-aut}. Let us introduce an additional automorphism $\upgamma_h$ by setting
\begin{equation}\label{upgamma-h}
\upgamma_h:=\upvartheta_{-h} \circ \upvartheta^{\mathsf{Ad}}_{h/2} \quad \forall\quad h\in \mfh.
\end{equation}
Explicitly, one has $\upgamma_h(\mrL^\pm)=q^{-\pi(h)/2}\mrL^\pm q^{-\pi(h)/2}$. In particular, the automorphism $\upgamma_h$ commutes with $\upchi_\mathscr{C}$ for each $\mathscr{C}\in \msG$, and so $\upgamma_h$ restricts to an automorphism of $\UR{\mfg}^{\upchi}$. Consider now the subalgebra $\UR{\mfg}^{\upchi\circ\upgamma}$ of $\UR{\mfg}$ fixed by all automorphisms of the form $\upchi_\mathscr{C}\circ \upgamma_h$: 
\begin{equation*}
\UR{\mfg}^{\upchi\circ\upgamma}=\bigcap_{(h,\mathscr{C})\in \mfh\times \msG}\UR{\mfg}^{\upchi_\mathscr{C}\circ \upgamma_h} = \bigcap_{h\in\mfh}\left(\UR{\mfg}^{\upchi}\right)^{\upgamma_h}.
\end{equation*}
We will prove in Theorem \ref{T:URg->Uhg} that $\UR{\mfg}^{\upchi\circ \upgamma}$ is isomorphic to $\Uhg$ as a $\C[\![\hbar]\!]$-algebra. To make this precise, let us introduce the Hopf algebra epimorphism 
\begin{equation*}
\ddot{\Upsilon}:=\uppsi_\hbar\circ \dot{\Upsilon}: \UR{\mfg}\onto \Uhg,
\end{equation*}
where we recall from \eqref{DUhb->Uhg} that $\uppsi_\hbar: D(\Uh{\mfb})\onto \Uhg$ coincides with the projection $\id_{\Uhg}\otimes \upepsilon_{\mfh}: \Uh{\mfg}\otimes \msS(\mfh)[\![\hbar]\!]\onto \Uh{\mfg}$ under the identification $D(\Uh{\mfb})\cong \Uh{\mfg}\otimes \msS(\mfh)[\![\hbar]\!]$ of Theorem \ref{T:dbl-Uhg(x)S}, where $\upepsilon_\mfh:\msS(\mfh)[\![\hbar]\!]\to \C[\![\hbar]\!]$ is the counit. We then have the following theorem, which provides the third main result of this section.
\begin{theorem}\label{T:URg->Uhg}
$\ddot{\Upsilon}$ restricts to an isomorphism of $\C[\![\hbar]\!]$-algebras
\begin{equation*}
\ddot{\Upsilon}|_{\UR{\mfg}^{\upchi\circ \upgamma}}:\UR{\mfg}^{\upchi\circ\upgamma}\iso \Uhg.
\end{equation*}
Moreover, the coefficients of $\Theta$, $\Upsigma^+$ and $\Upsigma^-$ generate the kernel of $\ddot{\Upsilon}$ as an ideal.
\end{theorem}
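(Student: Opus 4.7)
The strategy is to transfer both assertions across the isomorphism $\Upsilon$ of Theorem \ref{T:URg-main}, where they reduce to the analogous statements for $\Uh{\mfg}$ inside $D(\Uh{\mfb})$ recalled in Corollary \ref{C:D-fixed}, combined with the kernel description of $\dot{\Upsilon}$ already established in Theorem \ref{T:URg->DUhb}. The first task is to show that, for each $h\in\mfh$, one has the intertwining identity
\begin{equation*}
\Upsilon\circ \upgamma_h=\wt{\upgamma}_h\circ \Upsilon,\qquad \wt{\upgamma}_h:=\upgamma_h^D\otimes \id_{\msS_\hbar(\mfz_V^+)}\otimes\id_{\msS_\hbar(\mfz_V^-)},
\end{equation*}
where $\upgamma_h^D$ is the automorphism of $D(\Uh{\mfb})$ from Corollary \ref{C:D-fixed}. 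Since $\UR{\mfg}$ is topologically generated by the coefficients of $\mrL^\pm=I+\hbar \mrT^\pm$, it suffices to verify this on $\Upsilon(\mrL^\pm)=\msL_\omega^\pm\mathring{\mfZ}^\pm$. The matrices $\mathring{\mfZ}^\pm$ belong to $\mfgl(V)^{\mfg}[\![\hbar]\!]\otimes_\C \msS_\hbar(\mfz_V^\pm)$, so they are fixed by $\wt{\upgamma}_h$ and commute with $q^{\pi(h)/2}$; combined with Part \eqref{R-omega:4} of Proposition \ref{P:R-omega} applied through $\pi_\hbar\otimes\id$, this yields
\begin{equation*}
\wt{\upgamma}_h(\Upsilon(\mrL^\pm))=q^{-\pi(h)/2}\msL_\omega^\pm q^{-\pi(h)/2}\cdot\mathring{\mfZ}^\pm=\Upsilon(q^{-\pi(h)/2}\mrL^\pm q^{-\pi(h)/2})=\Upsilon(\upgamma_h(\mrL^\pm)).
\end{equation*}

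Next, I would combine this with Theorem \ref{T:URg->DUhb} to compute the image of $\UR{\mfg}^{\upchi\circ\upgamma}$ under $\Upsilon$. By that theorem, $\Upsilon$ restricts to an isomorphism $\UR{\mfg}^\upchi\iso D(\Uh{\mfb})$, and by the intertwining above it then restricts further to an isomorphism between $\UR{\mfg}^{\upchi\circ\upgamma}$ and the subalgebra of $D(\Uh{\mfb})$ fixed by every $\upgamma_h^D$. By Corollary \ref{C:D-fixed}, this latter subalgebra is exactly $\Uh{\mfg}\subset D(\Uh{\mfb})$. Finally, under the identification $D(\Uh{\mfb})\cong \Uh{\mfg}\otimes\msS(\mfh)[\![\hbar]\!]$, the projection $\uppsi_\hbar=\id_{\Uh{\mfg}}\otimes\upepsilon_\mfh$ restricts to the identity on the subalgebra $\Uh{\mfg}\otimes 1$, so $\ddot{\Upsilon}|_{\UR{\mfg}^{\upchi\circ\upgamma}}=\uppsi_\hbar\circ \dot{\Upsilon}|_{\UR{\mfg}^{\upchi\circ\upgamma}}$ is the composition of two isomorphisms, giving the first claim.

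For the second claim, I would factor $\ker\ddot{\Upsilon}=\dot{\Upsilon}^{-1}(\ker\uppsi_\hbar)$, which is generated as a (closed, two-sided) ideal in $\UR{\mfg}$ by $\ker\dot{\Upsilon}$ together with any lift of a set of topological generators of $\ker\uppsi_\hbar$. By Theorem \ref{T:URg->DUhb}, the coefficients of $\Upsigma^+$ and $\Upsigma^-$ already generate $\ker\dot{\Upsilon}$. On the other hand, $\ker\uppsi_\hbar$ is the closed ideal of $D(\Uh{\mfb})\cong\Uh{\mfg}\otimes\msS(\mfh)[\![\hbar]\!]$ generated by $\{\zeta(h_i):i\in\mbI\}$. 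Lemma \ref{L:Theta} identifies $\Upsilon(\Theta)=-\sum_{i\in\mbI}\pi(\varpi_i^\vee)\otimes \zeta(h_i)$: since $\pi|_\mfh$ is injective (as $V$ is faithful) the elements $\{\pi(\varpi_i^\vee)\}_{i\in\mbI}$ are linearly independent in $\pi(\mfh)$, so the coefficients of $\Theta$ with respect to this basis, namely $\{-\Upsilon^{-1}(\zeta(h_i))\}_{i\in\mbI}$, form a lift of the $\zeta(h_i)$. Pushing this back through $\Upsilon$ shows that the coefficients of $\Theta$, $\Upsigma^+$ and $\Upsigma^-$ together generate $\ker\ddot{\Upsilon}$, as claimed.

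The main technical point to verify carefully is the first step: that $\Upsilon$ intertwines the $\mfh$-family of automorphisms $\upgamma_h$ with the $\wt{\upgamma}_h$, and that taking $\upgamma_h$-fixed points commutes with the identification given by $\Upsilon$. The only slightly delicate part is the latter, which uses exactness of the topological tensor product $\otimes$ over $\C[\![\hbar]\!]$ so that the fixed-point subalgebra of $D(\Uh{\mfb})\otimes\msS_\hbar(\mfz_V^+)\otimes\msS_\hbar(\mfz_V^-)$ under all $\wt{\upgamma}_h$ is precisely $\Uh{\mfg}\otimes\msS_\hbar(\mfz_V^+)\otimes\msS_\hbar(\mfz_V^-)$; intersecting this with $\Upsilon(\UR{\mfg}^\upchi)=D(\Uh{\mfb})$ then yields $\Uh{\mfg}$ on the nose.
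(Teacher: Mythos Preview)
Your proposal is correct and follows essentially the same approach as the paper: transport $\upgamma_h$ across $\Upsilon$ using Part~\eqref{R-omega:4} of Proposition~\ref{P:R-omega} to identify it with $\upgamma_h^D\otimes\id\otimes\id$, then invoke Theorem~\ref{T:URg->DUhb} and Corollary~\ref{C:D-fixed} for the fixed-point statement, and Lemma~\ref{L:Theta} together with the kernel description of $\dot{\Upsilon}$ for the second claim. Your write-up is in fact slightly more explicit than the paper's in a couple of places (e.g., the observation that $\uppsi_\hbar$ restricts to the identity on $\Uh{\mfg}\otimes 1$, and the linear-independence argument for extracting the $\zeta(h_i)$ from the coefficients of $\Theta$).
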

\begin{proof}
Since $\Upsilon(\Theta)=-\sum_{i\in \mbI} \pi(\varpi_i^\vee)\otimes \zeta(h_i)$ is a generating matrix for the central subalgebra $\msS(\mfh)[\![\hbar]\!]\subset D(\Uh{\mfb})\cong \Uh{\mfg}\otimes \msS(\mfh)[\![\hbar]\!]$ and  $\mfZ^\pm=\Upsilon(\Upsigma^\pm)$ generates $\msS_\hbar(\mfz_V^\pm)$ as a topological algebra, the kernel of $\ddot{\Upsilon}$ is generated as a topological ideal by $\Theta$, $\Upsigma^+$ and $\Upsigma^-$.

Let us now prove the first statement of the theorem.
Consider the automorphism $\upgamma_h^\Upsilon$ of $D(\Uh{\mfb})\otimes \mathsf{S}_\hbar(\mfz_V^+) \otimes \mathsf{S}_\hbar(\mfz_V^-)$ induced by $\upgamma_h$ via the isomorphism $\Upsilon$ of Theorem \ref{T:URg-main}. Recall that $\Upsilon$ is uniquely determined by the property that $\Upsilon(\mrL^\pm)=\msL^\pm_\omega\mathring{\mfZ}^\pm$, where $\msL^\pm_\omega$ is defined in \eqref{mdL-def} and $\mathring{\mfZ}^\pm=(I+\hbar \mfZ^\pm)^{\pm 1}$. 
Since every element of the group $\mathrm{G}=\exp(\hbar \pi(\mfh))$ commutes with $\mathring{\mfZ}^+$ and $\mathring{\mfZ}^-$ (as they lay in $\mfgl(V)^\mfg \otimes_\C \left(\mathsf{S}_\hbar(\mfz_V^+) \otimes \mathsf{S}_\hbar(\mfz_V^-)\right)$), we have 
\begin{equation*}
\upgamma_h^\Upsilon(\msL^\pm_\omega\mathring{\mfZ}^\pm)=q^{-\pi(h)/2}\msL^\pm_\omega q^{-\pi(h)/2}\mathring{\mfZ}^\pm.
\end{equation*}
By Theorem \ref{T:URg->DUhb} and Corollary \ref{C:D-fixed}, it therefore suffices to show that the automorphism $\upgamma_h^D$ of Corollary \ref{C:D-fixed} satisfies $\upgamma_h^D(\msL^\pm_\omega)=q^{-\pi(h)/2}\msL^\pm_\omega q^{-\pi(h)/2}$. This, follows by applying $\pi\otimes \id$ to the identity $(\id \otimes \upgamma_h^D)(\msR_\omega^\pm)=q_1^{-h/2}\msR_\omega^\pm q_1^{-h/2}$ which was established in Part \eqref{R-omega:4} of Proposition \ref{P:R-omega}.\qedhere
\end{proof}

\subsection{The quantum Borel subalgebras}\label{ssec:URb}

To conclude Section \ref{sec:URg}, we apply the above machinery to quantize (and upgrade) the results of Section \ref{ssec:g-borel}.

\begin{definition}\label{D:URb}
Let $\UR{\mfb^+}$ and $\UR{\mfb^-}$ denote the unital, associative $\C[\![\hbar]\!]$-algebras topologically generated by $\{t_{ij}^+\}_{i,j\in \mcI}$ and $\{t_{ij}^-\}_{i,j\in \mcI}$, respectively, subject to the relations
\begin{gather*}
\mrT_\lambda^\pm=0 \quad \forall \; \lambda \in \dot{\msQ}_\mp,\\
[\mrT_2^\pm,\mrT_1^\pm]=[\dot{\mrR}_\pi,\mrT_1^\pm + \mrT_2^\pm]+\hbar\left(\dot{\mrR}_\pi \mrT_1^\pm \mrT_2^\pm - \mrT_1^\pm\mrT_2^\pm \dot{\mrR}_\pi\right), 
\end{gather*}
where $\mrT^\pm$ is the generating matrix  
\begin{equation*}
\mrT^\pm = \sum_{i,j\in \mcI}E_{ij}\otimes t_{ij}^\pm \in \End(V)\otimes_\C  \UR{\mfb^\pm}. 
\end{equation*}
\end{definition}
Here we follow the same conventions as in Section \ref{ssec:g-borel}: each symbol $\pm$ and $\mp$ takes only its upper value for $\UR{\mfb^+}$, and its lower value for $\UR{\mfb^-}$. 
This definition implies that, for each value of $\pm$, there is a $\C[\![\hbar]\!]$-algebra homomorphism
\begin{equation*}
\imath^{\scriptscriptstyle{\pm}}:\UR{\mfb^\pm}\to \UR{\mfg}, 
\end{equation*}
uniquely determined by $\imath^{\scriptscriptstyle{\pm}}(\mrT^\pm)=\mrT^\pm$. We then have the following corollary, which provides a quantization of Corollary \ref{C:b_r}. 
\begin{corollary}\label{C:URb} 
The composite $\Upsilon^{\scriptscriptstyle{\pm}}=\Upsilon \circ \imath^{\scriptscriptstyle{\pm}}$ is an isomorphism of $\C[\![\hbar]\!]$-algebras
\begin{equation*}
\Upsilon^{\scriptscriptstyle{\pm}}:\UR{\mfb^\pm}\iso \Uh{\mfb^\pm} \otimes \msS_\hbar(\mfz_V^\pm).
\end{equation*}
In particular, $\imath^{\scriptscriptstyle{\pm}}$ is injective and identifies $\UR{\mfb^\pm}$ with the Hopf subalgebra of $\UR{\mfg}$ generated by the coefficients of $\mrT^\pm$. 
\end{corollary}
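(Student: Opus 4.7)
The plan is to imitate the argument used to prove Theorem \ref{T:URg-main}, now restricted to the Borel subalgebras. First I would check that $\Upsilon^\pm$ is well-defined and has image in the claimed codomain: well-definedness is immediate, since $\imath^{\scriptscriptstyle{\pm}}(\mrT^\pm)=\mrT^\pm$ in $\UR{\mfg}$ already satisfies the defining relations of $\UR{\mfb^\pm}$ from Definition \ref{D:URb}, and then one applies $\Upsilon$. For the containment, recall from Theorem \ref{T:URg-main} that $\Upsilon(\mrT^\pm)=\msT^\pm_\omega+\dot{\mfZ}^\pm+\hbar \msT^\pm_\omega \dot{\mfZ}^\pm$. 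The factorization \eqref{R^D-fac} gives $R^D\in \Uh{\mfb^+}\otimes\Uh{\mfb^-}$, and since $\omega_\hbar$ swaps $\Uh{\mfb^+}$ and $\Uh{\mfb^-}$, we obtain $\msR^\pm_\omega\in \Uh{\mfb^\pm}\otimes \Uh{\mfb^\pm}$ and hence $\msT^\pm_\omega\in \End(V)\otimes_\C \Uh{\mfb^\pm}$. Combined with $\dot{\mfZ}^\pm\in \mfgl(V)^\mfg\otimes_\C \msS_\hbar(\mfz_V^\pm)$, this shows the image of $\Upsilon^\pm$ sits in $\Uh{\mfb^\pm}\otimes \msS_\hbar(\mfz_V^\pm)$.

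To show $\Upsilon^\pm$ is an isomorphism, I would argue at the semiclassical level and then invoke the lifting properties \ref{L1} and \ref{L2} of Section \ref{ssec:h-adic}. The codomain is topologically free, and the domain $\UR{\mfb^\pm}$ is separated and complete by construction. The defining relations of $\UR{\mfb^\pm}$ reduce modulo $\hbar$ precisely to those of $\mfb^\pm_\scriptr$ in Definition \ref{D:br}, exhibiting $\UR{\mfb^\pm}$ as a $\C[\![\hbar]\!]$-algebra deformation of $U(\mfb^\pm_\scriptr)$ in direct parallel with \eqref{UR-sc-limit}. A computation analogous to \eqref{Upsilon-scl} then identifies the semiclassical limit $\bar\Upsilon^\pm$ with the algebra homomorphism obtained from the Lie algebra isomorphism $\Phi^\pm_\scriptr:\mfb^\pm_\scriptr\iso \mfb^\pm\oplus\mfz_V^\pm$ of Corollary \ref{C:b_r} by applying the enveloping algebra functor. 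The Poincar\'e--Birkhoff--Witt theorem makes this a bijection, so \ref{L1} and \ref{L2} together promote it to an isomorphism $\Upsilon^\pm$.

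The remaining assertions follow immediately. Since both $\Upsilon$ and $\Upsilon^\pm$ are isomorphisms, the factorization $\imath^{\scriptscriptstyle{\pm}}=\Upsilon^{-1}\circ \Upsilon^\pm$ forces $\imath^{\scriptscriptstyle{\pm}}$ to be injective, with image the closed subalgebra of $\UR{\mfg}$ topologically generated by the entries of $\mrT^\pm$. That this image is a Hopf subalgebra is transparent from the formulas $\Delta_\mrR(\mrL^\pm)=\mrL^\pm_{[1]}\mrL^\pm_{[2]}$, $S_\mrR(\mrL^\pm)=(\mrL^\pm)^{-1}$ and $\veps_\mrR(\mrL^\pm)=I$ of Theorem \ref{T:URg-main}, or equivalently from the fact that it is the preimage under $\Upsilon$ of the Hopf subalgebra $\Uh{\mfb^\pm}\otimes \msS_\hbar(\mfz_V^\pm)$ of $D(\Uh{\mfb})\otimes \msS_\hbar(\mfz_V^+)\otimes \msS_\hbar(\mfz_V^-)$. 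The one subtle point throughout is the flatness statement $\UR{\mfb^\pm}/\hbar\UR{\mfb^\pm}\cong U(\mfb^\pm_\scriptr)$ needed for the semiclassical computation, but this parallels the analogous identification \eqref{UR-sc-limit} for $\UR{\mfg}$ and requires no new input beyond what has already been developed in Sections \ref{sec:g-r} and \ref{sec:URg}.
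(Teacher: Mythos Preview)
Your proposal is correct and follows essentially the same approach as the paper: reduce to the semiclassical limit via \ref{L1} and \ref{L2}, identify that limit with the enveloping-algebra extension of the Lie algebra isomorphism $\Phi^\pm_\scriptr$ from Corollary \ref{C:b_r}, and conclude by PBW. The paper's own proof is a one-line reference to exactly this strategy, so you have simply spelled out the details it leaves implicit.
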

\begin{proof}
This follows by the same argument as used to establish that $\Upsilon$ is an isomorphism in the proof of Theorem \ref{T:URg-main}, where the role of Theorem \ref{T:g-main} is played instead by Corollary \ref{C:b_r}.  We refer the reader to the proof of Theorem \ref{T:URg-main}, and to \eqref{Upsilon-scl} in particular, for further details.
\end{proof}

Next, note that for any $\mathscr{C}\in \mathrm{GL}_I(V)^\mfg \times \mathrm{GL}_I(V)^\mfg $ the automorphism $\upchi_\mathscr{C}$ of $\UR{\mfg}$ defined in Proposition \ref{P:aut} restricts to an automorphism of $\UR{\mfb^\pm}\cong \imath^{\scriptscriptstyle{\pm}}(\UR{\mfb^\pm})$. This automorphism only depends on one component of $\mathscr{C}$: if $\mathscr{C}$ and $\mathscr{D}$ satisfy $\mathscr{C}\mathscr{D}^{-1}\in \mathrm{GL}_I(V)^\mfg \times \{I\}$ then 
\begin{equation*}
\upchi_{\mathscr{C}}|_{\UR{\mfb^+}}=\upchi_{\mathscr{D}}|_{\UR{\mfb^+}}
\end{equation*}
and, similarly, if $\mathscr{C}\mathscr{D}^{-1}\in \{I\}\times \mathrm{GL}_I(V)^\mfg$ then $\upchi_{\mathscr{C}}|_{\UR{\mfb^-}}=\upchi_{\mathscr{D}}|_{\UR{\mfb^-}}$. Let $\UR{\mfb^\pm}^\upchi$ denote the subalgebra of $\UR{\mfb^\pm}$ fixed by all such automorphisms. That is, one has 
\begin{equation*}
\UR{\mfb^\pm}^\upchi=\UR{\mfb^\pm}\cap \UR{\mfg}^\upchi,
\end{equation*}
where we recall that $\UR{\mfg}^\upchi$ is the subalgebra of $\UR{\mfg}$ fixed by all $\upchi_\mathscr{C}$; see Section \ref{ssec:UR->DUhb}. 
Consider now the Hopf algebra homomorphism 
\begin{equation*}
\dot{\Upsilon}^{\scriptscriptstyle{\pm}}:=\dot{\Upsilon}\circ \imath^{\scriptscriptstyle{\pm}}:\UR{\mfb^\pm}\onto \Uh{\mfb^\pm},
\end{equation*}
where $\dot{\Upsilon}$ is as in Theorem \ref{T:URg->DUhb}. Equivalently, $\dot{\Upsilon}^{\scriptscriptstyle{\pm}}=(\id_{\Uh{\mfb^\pm}}\otimes \upepsilon^\pm)\circ  \Upsilon^{\scriptscriptstyle{\pm}}$, where $\upepsilon^\pm:\msS_\hbar(\mfz_V^\pm)\to \C[\![\hbar]\!]$ is the counit. Combining Theorem \ref{T:URg->DUhb} with Corollary \ref{C:URb} then outputs the following characterizations of $\Uh{\mfb^\pm}$ as a subalgebra and quotient of $\UR{\mfb^\pm}$.
\begin{corollary}\label{C:URb->Ub}
$\dot{\Upsilon}^{\scriptscriptstyle{\pm}}$ restricts to an isomorphism of topological Hopf algebras 
\begin{equation*}
\dot{\Upsilon}^{\scriptscriptstyle{\pm}}|_{\UR{\mfb^\pm}^\upchi}:\UR{\mfb^\pm}^\upchi\iso \Uh{\mfb^\pm}.
\end{equation*}
Moreover, the kernel of $\dot{\Upsilon}^{\scriptscriptstyle{\pm}}$ is generated as an ideal by the coefficients of the central matrix $\Upsigma^\pm\in \mfgl(V)^\mfg\otimes \msZ(\UR{\mfb^\pm})$.
\end{corollary}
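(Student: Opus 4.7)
The plan is to derive both assertions directly from Corollary \ref{C:URb} by reprising, in this one-sided setting, the arguments used in the proof of Theorem \ref{T:URg->DUhb}. The first step is to upgrade the $\C[\![\hbar]\!]$-algebra isomorphism $\Upsilon^{\scriptscriptstyle{\pm}}:\UR{\mfb^\pm}\iso \Uh{\mfb^\pm}\otimes \msS_\hbar(\mfz_V^\pm)$ of Corollary \ref{C:URb} to an isomorphism of topological Hopf algebras. This is essentially automatic: $\imath^{\scriptscriptstyle{\pm}}$ identifies $\UR{\mfb^\pm}$ with a topological Hopf subalgebra of $\UR{\mfg}$, and the identity $\Upsilon(\mrL^\pm)=\msL^\pm_\omega \mathring{\mfZ}^\pm$ combined with $\msL^\pm_\omega\in \End(V)\otimes_\C \Uh{\mfb^\pm}$ (which follows from $\msR^\pm_\omega\in \Uh{\mfb^\pm}\otimes \Uh{\mfb^\pm}$) shows that $\Upsilon$ carries this subalgebra bijectively onto the Hopf subalgebra $\Uh{\mfb^\pm}\otimes \msS_\hbar(\mfz_V^\pm)$ of its codomain.

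With $\Upsilon^{\scriptscriptstyle{\pm}}$ available as a Hopf isomorphism, the kernel statement then follows from the factorization $\dot{\Upsilon}^{\scriptscriptstyle{\pm}}=(\id_{\Uh{\mfb^\pm}}\otimes \upepsilon^\pm)\circ \Upsilon^{\scriptscriptstyle{\pm}}$ recorded above the corollary: the kernel of $\id_{\Uh{\mfb^\pm}}\otimes \upepsilon^\pm$ is topologically generated by the coefficients of $\mfZ^\pm$, hence $\ker \dot{\Upsilon}^{\scriptscriptstyle{\pm}}$ is topologically generated by the coefficients of $(\Upsilon^{\scriptscriptstyle{\pm}})^{-1}(\mfZ^\pm)$. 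Since $\Upsilon(\Upsigma^\pm)=\mfZ^\pm$ lies in $\Uh{\mfb^\pm}\otimes \msS_\hbar(\mfz_V^\pm)$, the matrix $\Upsigma^\pm$ from Theorem \ref{T:URg->DUhb} actually belongs to $\imath^{\scriptscriptstyle{\pm}}(\UR{\mfb^\pm})$, and in this way is identified with $(\Upsilon^{\scriptscriptstyle{\pm}})^{-1}(\mfZ^\pm)$. As $\Upsigma^\pm$ is central in $\UR{\mfg}$ and $\imath^{\scriptscriptstyle{\pm}}$ is an algebra embedding, we obtain $\Upsigma^\pm\in \mfgl(V)^\mfg\otimes_\C \msZ(\UR{\mfb^\pm})$, as required.

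For the fixed-point assertion, I would transport the automorphisms $\upchi_\mathscr{C}|_{\UR{\mfb^\pm}}$ through $\Upsilon^{\scriptscriptstyle{\pm}}$. The proof of Theorem \ref{T:URg->DUhb} shows that $\upchi_\mathscr{C}$ corresponds under $\Upsilon$ to $\id_{D(\Uh{\mfb})}\otimes \upchi^\msS_\mathscr{C}$, and restriction to $\UR{\mfb^\pm}$ gives that $\upchi_\mathscr{C}|_{\UR{\mfb^\pm}}$ corresponds to $\id_{\Uh{\mfb^\pm}}\otimes \upchi^{\msS,\pm}_{C^\pm}$ on $\Uh{\mfb^\pm}\otimes \msS_\hbar(\mfz_V^\pm)$, where $\upchi^{\msS,\pm}_{C^\pm}$ is the automorphism of $\msS_\hbar(\mfz_V^\pm)$ determined by $\mathring{\mfZ}^\pm\mapsto \mathring{\mfZ}^\pm C^\pm$ and depends only on $C^\pm$. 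Running the same semiclassical invariants argument as in the proof of Theorem \ref{T:URg->DUhb}, one sees that $\msS_\hbar(\mfz_V^\pm)^{\upchi^{\msS,\pm}}=\C[\![\hbar]\!]$, since the associated semiclassical automorphisms act by affine translations on the generators of $\msS(\mfz_V^\pm)$ and the only polynomial invariants of such an action are the constants. By exactness of the topological tensor product over $\C[\![\hbar]\!]$, the fixed-point subalgebra of $\Uh{\mfb^\pm}\otimes \msS_\hbar(\mfz_V^\pm)$ is $\Uh{\mfb^\pm}\otimes \C[\![\hbar]\!]$, onto which $\id_{\Uh{\mfb^\pm}}\otimes \upepsilon^\pm$ restricts to the identity. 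Pulling back through $\Upsilon^{\scriptscriptstyle{\pm}}$ yields the desired isomorphism. I do not anticipate any significant obstacle; the only mild subtlety is that only the component $C^\pm$ of $\mathscr{C}$ enters in the restriction of $\upchi_\mathscr{C}$ to $\UR{\mfb^\pm}$, which is precisely the observation made in the paragraph immediately preceding the statement of the corollary.
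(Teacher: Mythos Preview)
Your proposal is correct and is exactly the argument the paper has in mind: the sentence preceding the corollary states that it is obtained by ``combining Theorem \ref{T:URg->DUhb} with Corollary \ref{C:URb}'', and the remark following it records precisely your identification $\Upsigma^\pm=(\Upsilon^{\scriptscriptstyle{\pm}})^{-1}(\mfZ^\pm)$ via $\UR{\mfb^\pm}\cong\imath^{\scriptscriptstyle{\pm}}(\UR{\mfb^\pm})$. Your write-up simply unpacks this in the natural way, reprising the fixed-point and kernel arguments from Theorem \ref{T:URg->DUhb} in the one-sided setting.
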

Here we note that $\Upsigma^\pm$ is as defined in the statement of Theorem \ref{T:URg->DUhb}, and may be viewed as an element of $\mfgl(V)^\mfg\otimes \msZ(\UR{\mfb^\pm})$ as a consequence of Corollary \ref{C:URb}. Indeed, one has $\Upsigma^\pm=(\Upsilon^{\scriptscriptstyle{\pm}})^{-1}(\mfZ^\pm)$, where we work through the identification $\UR{\mfb^\pm}\cong\imath^{\scriptscriptstyle{\pm}}(\UR{\mfb^\pm})\subset \UR{\mfg}$. 
%
\section{Quasitriangularity and the space of \texorpdfstring{$\mfg$}{g}-invariants}\label{sec:Quasi}

In this section, we address the problem of obtaining a sufficient and necessary condition on $V$ for which $\UR{\mfg}$ is quasitriangular and, in addition,  isomorphic to the quantum double of $\UR{\mfb}:=\UR{\mfb^+}$; see Theorem \ref{T:Quasi}. 
\subsection{Characterizing $\msS_\hbar(\mfz_V^+)$ as a trivial deformation}\label{ssec:g-invar-comm}
To begin, we prove the following elementary lemma which characterizes the cocommutativity of the topological Hopf algebra $\msS_\hbar(\mfz_V^+)$. 
\begin{lemma}\label{L:gl(V)^g}
The following three statements are equivalent: 
\begin{enumerate}[font=\upshape]
\item\label{gl(V)^g:1} The composition factors of the $\mfg$-module $V$ are pairwise non-isomorphic.
\item\label{gl(V)^g:2} The algebra of invariants $\mfgl(V)^\mfg=\End_\mfg(V)$ is commutative.
\item\label{gl(V)^g:3} The topological Hopf algebra $\msS_\hbar(\mfz_V^+)$ is cocommutative.
\end{enumerate}
\end{lemma}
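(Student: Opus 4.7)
The plan is to prove the equivalences in the cyclic order $(1) \Leftrightarrow (2) \Leftrightarrow (3)$, with the first equivalence being a standard Schur-type argument and the second being a direct computation using the explicit formula for the coproduct of $\msS_\hbar(\mfz_V^+)$ given in Lemma \ref{L:q-z_V}.

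For $(1) \Leftrightarrow (2)$, I would first invoke Weyl's complete reducibility theorem to decompose the finite-dimensional $\mfg$-module $V$ as $V \cong \bigoplus_{i \in J} V_i^{\oplus n_i}$, where $\{V_i\}_{i \in J}$ are pairwise non-isomorphic irreducible $\mfg$-modules and $n_i \in \Z_{>0}$. Under this decomposition, composition factors coincide with irreducible summands (with multiplicity). Schur's lemma then gives an algebra isomorphism $\End_\mfg(V) \cong \prod_{i \in J} \mathrm{Mat}_{n_i}(\C)$, which is commutative if and only if $n_i = 1$ for every $i \in J$, i.e.\ the irreducible summands (equivalently, the composition factors) are pairwise non-isomorphic.

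For $(2) \Leftrightarrow (3)$, I would exploit the formula $\Delta(\mfZ^+) = \mfZ^+_{[1]} + \mfZ^+_{[2]} + \hbar\, \mfZ^+_{[1]} \mfZ^+_{[2]}$, which yields
\begin{equation*}
(\Delta - \Delta^{\mathrm{op}})(\mfZ^+) = \hbar \, [\mfZ^+_{[1]}, \mfZ^+_{[2]}] \in \mfgl(V)^\mfg \otimes \msS_\hbar(\mfz_V^+)^{\otimes 2}.
\end{equation*}
Fix a basis $\{E_j\}_{j\in \mcJ}$ of $\mfgl(V)^\mfg$ with dual basis $\{z_j^+\}_{j \in \mcJ}\subset \mfz_V^+$, so that $\mfZ^+=\sum_j E_j\otimes z_j^+$ and $[\mfZ^+_{[1]},\mfZ^+_{[2]}] = \sum_{j,k} [E_j,E_k] \otimes z_j^+ \otimes z_k^+$. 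Since $\msS_\hbar(\mfz_V^+)$ is topologically free (hence torsion free) and the elements $\{z_j^+\otimes z_k^+\}_{j,k\in \mcJ}$ are linearly independent, the vanishing of $(\Delta - \Delta^{\mathrm{op}})(\mfZ^+)$ is equivalent to the vanishing of every bracket $[E_j,E_k]$, that is, to the commutativity of $\mfgl(V)^\mfg$. As $\mfZ^+$ topologically generates $\msS_\hbar(\mfz_V^+)$ as a $\C[\![\hbar]\!]$-algebra and $\Delta$, $\Delta^{\mathrm{op}}$ are algebra homomorphisms, cocommutativity of $\msS_\hbar(\mfz_V^+)$ is itself equivalent to $(\Delta-\Delta^{\mathrm{op}})(\mfZ^+)=0$.

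I do not anticipate any serious obstacle in this proof: both implications reduce to standard linear algebra once the right formulas are unfolded. The only mild subtlety is the passage from cocommutativity of $\msS_\hbar(\mfz_V^+)$ to the vanishing identity on $\mfZ^+$, which relies on the fact that $\mfZ^+$ generates the algebra topologically; this is immediate from \eqref{S_h-def} since $\msS_\hbar(\mfz_V^+) = \msS(\mfz_V^+)[\![\hbar]\!]$ as an algebra and $\{z_j^+\}_{j\in \mcJ}$ topologically generate the right-hand side.
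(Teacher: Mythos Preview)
Your proposal is correct and follows essentially the same approach as the paper's proof: the equivalence $(1)\Leftrightarrow(2)$ via Schur's lemma and the decomposition $\End_\mfg(V)\cong\bigoplus_j\End(\C^{n_j})$, and the equivalence $(2)\Leftrightarrow(3)$ via the identity $(\Delta-\Delta^{\mathrm{op}})(\mfZ^+)=\hbar[\mfZ^+_{[1]},\mfZ^+_{[2]}]$ together with the fact that the coefficients of $\mfZ^+$ span $\mfz_V^+$. Your version is slightly more explicit about the topological generation and torsion-freeness steps, but there is no substantive difference.
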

\begin{proof}
Let $\{V_j\}_{j\in \mcJ}$ be the distinct composition factors of $V$, and write $n_j$ for the multiplicity of $V_j$, so that $V\cong\bigoplus_{j\in \mcJ} V_j^{n_j}$ as a $\mfg$-module. Then, by Schur's lemma, we have
\begin{equation*}
\End_\mfg(V)\cong  \bigoplus_{j\in \mcJ}\End_\mfg(V_j^{n_j})\cong\bigoplus_{j\in \mcJ} \End(\C^{n_j})
\end{equation*}
and so $\End_\mfg(V)$ is commutative if and only if $n_j=1$ for all $j\in \mcJ$. This establishes the equivalence of \eqref{gl(V)^g:1} and \eqref{gl(V)^g:2}. 

Let us now argue that \eqref{gl(V)^g:2} and \eqref{gl(V)^g:3} are equivalent. By Lemma \ref{L:q-z_V}, the generating matrix $\mfZ^+$ of $\msS_\hbar(\mfz_V^+)$ satisfies $\Delta(\mfZ^+)=\mfZ^+_{[1]}+\mfZ^+_{[2]}+\mfZ^+_{[1]}\mfZ^+_{[2]}$, and thus $\msS_\hbar(\mfz_V^+)$ is cocommutative if and only if $[\mfZ^+_{[1]},\mfZ^+_{[2]}]=0$. Since $\mfZ^+$ belongs to $\mfgl(V)^\mfg \otimes_\C \msS_\hbar(\mfz_V^+)$ and its coefficients span the $\dim \mfgl(V)^\mfg$ space $\mfz_V^+\subset \msS_\hbar(\mfz_V^+)\cong \msS(\mfz_V^+)[\![\hbar]\!]$, the commutator $[\mfZ^+_{[1]},\mfZ^+_{[2]}]$ vanishes if and only if $\mfgl(V)^\mfg$ is commutative.\qedhere
\end{proof}
 For the remainder of this subsection, we narrow our focus to the case where the compositions factors of $V$ are non-isomorphic, as in the above lemma. 
 
 Let $\msZ^\pm \in \mfgl(V)^\mfg \otimes_\C \msS_\hbar(\mfz_V^\pm)$ be the unique solution of the equation
\begin{equation}\label{def:msZ}
q^{\mathsf{Z}^\pm}:=\exp\!\left(\tfrac{\hbar}{2} \mathsf{Z}^\pm\right)=I+\hbar\mfZ^\pm. 
\end{equation}
Then $\msZ^\pm$ coincides with $2\cdot\mfZ^\pm$ modulo $\hbar$ and, since $\mfgl(V)^\mfg \otimes_\C \msS_\hbar(\mfz_V^\pm)$ is commutative, Lemma \ref{L:q-z_V} implies that the coefficients of $\mathsf{Z}^\pm$ are primitive elements:
\begin{equation*}
\Delta(\msZ^\pm)=\msZ^\pm_{[1]}+\msZ^\pm_{[2]}.
\end{equation*}
Consequently, if $\mfz_\msZ^\pm$ denotes the complex vector space spanned by the coefficients of $\msZ^\pm$, then the assignment $\msZ^\pm \mapsto \msZ^\pm$ uniquely extends to an isomorphism of topological Hopf algebras 
\begin{equation*}
\msS(\mfz_\msZ^\pm)[\![\hbar]\!]\iso \msS_\hbar(\mfz_V^\pm),
\end{equation*}
where $\msS(\mfz_\msZ^\pm)[\![\hbar]\!]$ is the trivial Hopf algebra deformation of the symmetric algebra  $\msS(\mfz_\msZ^\pm)\cong U(\mfz_\msZ^\pm)$. It follows from this observation and the general theory of quantum duality that $\msS(\mfz_\msZ^-)[\![\hbar]\!]$ (and thus $\msS_\hbar(\mfz_V^-)$) can be identified with the quantized enveloping algebra dual to $\msS(\mfz_\msZ^+)[\![\hbar]\!]\cong \msS_\hbar(\mfz_V^+)$.
 
For the sake of completeness, let us spell out some of the relevant details.
Let
\begin{equation*}
\msR_\hbar(\msS(\mfz_\msZ^+))=\bigoplus_{n\in \N}\hbar^n \msS^n( \mfz_\msZ^+) \subset \msS(\mfz_\msZ^+)[\hbar]
\end{equation*}
 denote the Rees algebra of the symmetric algebra $\msS(\mfz_\msZ^+)$, with respect to its standard filtration. Equivalently, it is the subalgebra $\msS(\hbar\mfz_\msZ^+)[\hbar]$ of $\msS(\mfz_\msZ^+)[\hbar]$.  By Proposition 3.8 of \cite{KasTu00}, the Drinfeld--Gavarini subalgebra $\msS(\mfz_\msZ^+)[\![\hbar]\!]^\prime$ of $\msS(\mfz_\msZ^+)[\![\hbar]\!]$ (see Section \ref{ssec:Uhb}) is given explicitly by
\begin{equation*}
\msS(\mfz_\msZ^+)[\![\hbar]\!]^\prime=\widehat{\msR_\hbar(\msS(\mfz_\msZ^+))}\subset \msS(\mfz_\msZ^+)[\![\hbar]\!],
\end{equation*}
where $\msR_\hbar(\msS(\mfz_\msZ^+))$ is completed with respect to its natural grading. 
The quantized enveloping algebra dual of $\msS(\mfz_\msZ^+)[\![\hbar]\!]$ can then be identified with the trivial deformation of the graded dual $\msS(\mfz_\msZ^+){\vphantom{)}}^\star =\bigoplus_{n\in \N} \msS^n(\mfz_\msZ^+)^\ast$. Explicitly, there is an isomorphism $\uptheta:\msS(\mfz_\msZ^+){\vphantom{)}}^\star[\![\hbar]\!]\iso \msS(\mfz_\msZ^+)[\![\hbar]\!]{\vphantom{)}}^\bullet$ uniquely determined by 
\begin{equation*}
\uptheta(f)(\hbar^n x)=f(x) \quad \forall \; x\in \msS^n(\mfz_\msZ^+)\; \text{ and }\; f\in \msS(\mfz_\msZ^+){\vphantom{)}}^\star.
\end{equation*}
Since the symmetrization map $\upsigma:\msS((\mfz_\msZ^+)^\ast)\iso \msS(\mfz_\msZ^+){\vphantom{)}}^\star$, defined on $\msS^n((\mfz_\msZ^+)^\ast)$ by
\begin{equation*}
\upsigma(f_1f_2\cdots f_n)(z_1z_2\cdots z_m)=\delta_{n,m}\sum_{\pi\in S_n}\prod_{j=1}^n f_j(z_{\pi(j)}) \quad \forall \; z_i\in \mfz_\msZ^+,
\end{equation*}
is an isomorphism of Hopf algebras over $\C$, it extends trivially to an isomorphism of topological Hopf algebras $\upsigma:\msS((\mfz_\msZ^+)^\ast)[\![\hbar]\!]\iso \msS(\mfz_\msZ^+){\vphantom{)}}^\star[\![\hbar]\!]$. Collecting all of the above facts, we obtain the following. 
\begin{corollary}\label{C:S+=S-}
For each non-degenerate bilinear form $\langle\,,\,\rangle:\mfz_\msZ^-\times \mfz_\msZ^+\to \C$, there is an isomorphism 
of topological Hopf algebras
$
\upvartheta: \msS(\mfz_\msZ^-)[\![\hbar]\!]\iso \msS(\mfz_\msZ^+)[\![\hbar]\!]{\vphantom{)}}^\bullet
$
satisfying
\begin{equation*}
\upvartheta(z_1^-z_2^-\cdots z_n^-)(\hbar^m z_1^+z_2^+\cdots z_m^+)=\delta_{n,m}\sum_{\pi\in S_n}\prod_{j=1}^n \langle z_j^-,z_{\pi(j)}^+\rangle \quad \forall \; z_i^\pm\in \mfz_\msZ^\pm.
\end{equation*}
In particular, $\msS_\hbar(\mfz_V^-)$ is the quantized enveloping algebra dual of $\msS_\hbar(\mfz_V^+)$.
\end{corollary}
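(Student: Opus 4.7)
My plan is to build $\upvartheta$ as a composition of three topological Hopf algebra isomorphisms already available in the paragraphs preceding the corollary, and then to read off the explicit formula by tracking the image of $z_1^-\cdots z_n^-$ through each step.

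The non-degenerate pairing $\langle\,,\,\rangle$ yields a linear isomorphism $\phi:\mfz_\msZ^-\iso(\mfz_\msZ^+)^\ast$ defined by $\phi(z^-)=\langle z^-,\cdot\rangle$. Since the coefficients of $\msZ^\pm$ are primitive in $\msS_\hbar(\mfz_V^\pm)$, the subspaces $\mfz_\msZ^\pm$ are abelian Lie algebras with trivial cobracket, and the functoriality of the symmetric algebra allows $\phi$ to be extended to a Hopf algebra isomorphism $\msS(\mfz_\msZ^-)\iso\msS((\mfz_\msZ^+)^\ast)$ over $\C$, which trivially $\hbar$-deforms to an isomorphism of topological Hopf algebras
\begin{equation*}
\Phi:\msS(\mfz_\msZ^-)[\![\hbar]\!]\iso\msS((\mfz_\msZ^+)^\ast)[\![\hbar]\!].
\end{equation*}
I would then define $\upvartheta:=\uptheta\circ\upsigma\circ\Phi$, which is an isomorphism of topological Hopf algebras by construction.

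To establish the explicit formula, I would trace $z_1^-\cdots z_n^-$ through the three maps in turn. Applying $\Phi$ produces $\langle z_1^-,\cdot\rangle\cdots\langle z_n^-,\cdot\rangle\in\msS^n((\mfz_\msZ^+)^\ast)$; applying $\upsigma$ then yields the functional in $\msS(\mfz_\msZ^+){\vphantom{)}}^\star$ vanishing off $\msS^n(\mfz_\msZ^+)$ and sending $z_1^+\cdots z_m^+$ to $\delta_{n,m}\sum_{\pi\in S_n}\prod_j\langle z_j^-,z_{\pi(j)}^+\rangle$; finally, applying $\uptheta$ re-interprets this functional as a continuous linear form on $\msS(\mfz_\msZ^+)[\![\hbar]\!]{\vphantom{)}}^\prime\cong\widehat{\msR_\hbar(\msS(\mfz_\msZ^+))}$ whose value on $\hbar^m z_1^+\cdots z_m^+$ is exactly this scalar, as required.

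For the ``in particular'' assertion, I would observe that the choice of any non-degenerate symmetric bilinear form on the commutative algebra $\mfgl(V)^\mfg$ (for instance, a suitable trace form) transports to a non-degenerate pairing $\mfz_\msZ^-\times\mfz_\msZ^+\to\C$ via the canonical identifications $\mfz_\msZ^\pm\cong\mfz_V^\pm=(\mfgl(V)^\mfg)^\ast$ implicit in Section \ref{ssec:g-invar-comm}. Feeding this pairing into the construction above realizes $\msS_\hbar(\mfz_V^-)$ as $\msS_\hbar(\mfz_V^+){\vphantom{)}}^\bullet$ via $\upvartheta$, which is the asserted QUE duality. The only genuine obstacle in this argument is to confirm that $\uptheta$ is an isomorphism not merely of $\C[\![\hbar]\!]$-modules but of topological Hopf algebras, but this is a routine consequence of the standard graded self-duality of symmetric algebras and is largely implicit in the discussion preceding the corollary.
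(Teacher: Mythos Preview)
Your argument is correct and is precisely the intended one: the paper does not give a separate proof of this corollary but simply writes ``Collecting all of the above facts, we obtain the following,'' and your construction $\upvartheta=\uptheta\circ\upsigma\circ\Phi$ (with $\Phi$ induced by the pairing) is exactly how those facts are meant to be collected. Your verification of the explicit formula and your remark that the Hopf-algebra compatibility of $\uptheta$ is the only point requiring a routine check are both on target.
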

%
%

\subsection{Quasitriangularity}\label{ssec:Quasi}
With Lemma \ref{L:gl(V)^g} and Corollary \ref{C:S+=S-} at our disposal, we are now prepared to formulate and prove the main result of this section. 
\begin{theorem}\label{T:Quasi}
$\UR{\mfg}$ is quasitriangular if and only if the underlying $\mfg$-module $V$ has no repeated composition factors. In this case, one has 
\begin{equation*}
\UR{\mfg}\cong D(\UR{\mfb})\cong D(\Uh{\mfb}\otimes \msS_\hbar(\mfz_V^+)).
\end{equation*}
\end{theorem}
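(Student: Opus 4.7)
The proof plan is to leverage Theorem \ref{T:URg-main}, which supplies an isomorphism of topological Hopf algebras
\[
\Upsilon: \UR{\mfg} \iso D(\Uh{\mfb}) \otimes \msS_\hbar(\mfz_V^+) \otimes \msS_\hbar(\mfz_V^-),
\]
combined with the characterization of Lemma \ref{L:gl(V)^g} and the quantum duality statement of Corollary \ref{C:S+=S-}.

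For the sufficient direction, I would assume that $V$ is a direct sum of pairwise non-isomorphic irreducibles. By Lemma \ref{L:gl(V)^g}, $\msS_\hbar(\mfz_V^+)$ and $\msS_\hbar(\mfz_V^-)$ are then both commutative and cocommutative. The crucial identification to establish is
\[
\msS_\hbar(\mfz_V^+) \otimes \msS_\hbar(\mfz_V^-) \cong D(\msS_\hbar(\mfz_V^+))
\]
as topological Hopf algebras. By Corollary \ref{C:S+=S-}, $\msS_\hbar(\mfz_V^-)$ is naturally identified with the quantized dual of $\msS_\hbar(\mfz_V^+)$, which in the cocommutative setting agrees with its own co-opposite. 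Writing the double as the cross product $\msS_\hbar(\mfz_V^-) \bowtie \msS_\hbar(\mfz_V^+)$, the left and right coadjoint actions governing the cross relations vanish identically when both factors are simultaneously commutative and cocommutative; equivalently, the classical Lie bialgebra double $D(\mfz_V^+)\cong \mfz_V^+\oplus \mfz_V^-$ is abelian with trivial cobracket, whose quantization is essentially unique. Combining this with the compatibility of $D$ with tensor products of independent quantized enveloping algebras gives
\[
\UR{\mfg} \cong D(\Uh{\mfb}) \otimes D(\msS_\hbar(\mfz_V^+)) \cong D(\Uh{\mfb} \otimes \msS_\hbar(\mfz_V^+)) \cong D(\UR{\mfb}),
\]
the last step being Corollary \ref{C:URb}. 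Quasitriangularity is then inherited from the canonical $R$-matrix of the quantum double.

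For the necessary direction, suppose $\UR{\mfg}$ is quasitriangular with universal $R$-matrix $\mcR$. Since $\UR{\mfg}$ quantizes the Lie bialgebra $(\mfg_\scriptr,\delta_\scriptr)$ by Theorem \ref{T:URg-main}, one has $\mcR\equiv 1 \bmod \hbar$, and $r:=\hbar^{-1}(\mcR-1) \bmod \hbar \in \mfg_\scriptr^{\otimes 2}$ is a classical $r$-matrix endowing $\mfg_\scriptr$ with a quasitriangular structure. The cobracket of any quasitriangular Lie bialgebra is the coboundary $\delta(x)=[x\otimes 1 + 1\otimes x, r]$, which necessarily annihilates every central element. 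By Theorem \ref{T:g-main}, $\mfz_V^+$ and $\mfz_V^-$ are central Lie bialgebra ideals of $\mfg_\scriptr$, hence $\delta_\scriptr$ must restrict to zero on $\mfz_V^\pm$. But this restriction is the cobracket constructed in Section \ref{ssec:g-inv-dual}, which by Lemma \ref{L:gl(V)^g} is nonzero whenever $V$ has a repeated composition factor. This forces $V$ to split into distinct irreducible summands, completing the proof.

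The principal obstacle will be the algebraic identification $D(\msS_\hbar(\mfz_V^+))\cong \msS_\hbar(\mfz_V^+)\otimes \msS_\hbar(\mfz_V^-)$ in the commutative-cocommutative regime, together with the compatibility of the quantum double with the tensor decomposition $\UR{\mfb}\cong \Uh{\mfb}\otimes \msS_\hbar(\mfz_V^+)$ of Corollary \ref{C:URb}, so that $D(\UR{\mfb})\cong D(\Uh{\mfb})\otimes D(\msS_\hbar(\mfz_V^+))$. The former amounts to checking the vanishing of the cross relations in the double cross product presentation, while the latter reflects the independence of the two factors. Both are essentially formal once the Lie bialgebra picture is in hand, but they require careful bookkeeping in the topological $\C[\![\hbar]\!]$-setting to ensure the duality pairings involved line up with those defining $D(\UR{\mfb})$.
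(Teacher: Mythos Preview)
Your proposal is correct and uses the same essential inputs as the paper (Theorem \ref{T:URg-main}, Lemma \ref{L:gl(V)^g}, Corollary \ref{C:S+=S-}, Corollary \ref{C:URb}), but the execution differs in two places worth noting.

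For the necessary direction, you pass to the semiclassical limit and argue that the cobracket of a quasitriangular Lie bialgebra annihilates central elements. The paper instead stays entirely at the quantum level: if $\mcR$ is a universal $R$-matrix and $z$ is central in $\UR{\mfg}$, then $\Delta^{\mathrm{op}}(z)=\mcR\Delta(z)\mcR^{-1}=\Delta(z)$, so the central subalgebra $\msS_\hbar(\mfz_V^+)$ is forced to be cocommutative, and Lemma \ref{L:gl(V)^g} finishes. This is shorter and avoids the (admittedly standard) check that any universal $R$-matrix on a QUE algebra reduces to $1$ modulo $\hbar$. Your semiclassical argument is essentially what the paper records separately as Remark \ref{R:Quasi}.

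For the sufficient direction and the double identification, you proceed in two steps: first $D(A\otimes B)\cong D(A)\otimes D(B)$ for independent factors, then $D(\msS_\hbar(\mfz_V^+))\cong \msS_\hbar(\mfz_V^+)\otimes\msS_\hbar(\mfz_V^-)$ in the commutative--cocommutative regime. The paper collapses this into a single direct verification: it exhibits the canonical element as $\mcR=R^D e^{\hbar\Omega_\msZ}$ (with $\Omega_\msZ\in\mfz_\msZ^+\otimes\mfz_\msZ^-$ the canonical tensor for a chosen pairing) and checks the characterizing properties \eqref{DUha:1}--\eqref{R:can} of Section \ref{ssec:DUhb} for $D(\Uh{\mfb})\otimes\msS_\hbar(\mfz_V^+)\otimes\msS_\hbar(\mfz_V^-)$ by hand. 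This bypasses any general tensor-compatibility statement for the quantum double and makes the universal $R$-matrix explicit, at the cost of a short computation. Both routes unwind to the same verification.
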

\begin{proof}
Suppose that $\UR{\mfg}$ is quasitriangular. Then, by Theorem \ref{T:URg-main}, $D(\Uh{\mfb})\otimes \msS_\hbar(\mfz_V^+)\otimes \msS_\hbar(\mfz_V^-)$ is as well.  Consider the Hopf subalgebra $\msS_\hbar(\mfz_V^+)$. Since it is contained in the center of $ D(\Uh{\mfb})\otimes \msS_\hbar(\mfz_V^+)\otimes \msS_\hbar(\mfz_V^-)$, the quasitriangularity assumption implies that it is cocommutative. Indeed, we have 
\begin{equation*}
(\Delta-\Delta^{\mathrm{op}})(z)=\mcR\Delta(z)\mcR^{-1}-\Delta^{\mathrm{op}}(z)=0 \quad \forall\; z\in \msS_\hbar(\mfz_V^+),
\end{equation*}
where $\mcR$ is the associated universal $R$-matrix of $D(\Uh{\mfb})\otimes \msS_\hbar(\mfz_V^+)\otimes \msS_\hbar(\mfz_V^-)$. 
 By Lemma \ref{L:gl(V)^g}, this is only possible if $V$ has no repeated composition factors. 

Conversely, if $V$ has pairwise non-isomorphic composition factors, then $\msS_\hbar(\mfz_V^+)$ and $\msS_\hbar(\mfz_V^-)$ are both cocommutative (by Lemma \ref{L:gl(V)^g}), and the universal $R$-matrix $R^D$ of $D(\Uh{\mfb})$ defines a quasitriangular structure on $D(\Uh{\mfb})\otimes \msS_\hbar(\mfz_V^+)\otimes \msS_\hbar(\mfz_V^-)$, for instance. Hence, $\UR{\mfg}$ is quasitriangular. 

The last assertion of the theorem follows from the observation that, when $V$ has no repeated composition factors,  $\UR{\mfg}\cong D(\Uh{\mfb})\otimes \msS_\hbar(\mfz_V^+)\otimes \msS_\hbar(\mfz_V^-)$ satisfies the defining properties \eqref{DUha:1}--\eqref{R:can} of the quantum double $D(\Uh{\mfb}\otimes \msS_\hbar(\mfz_V^+))\cong D(\UR{\mfb})$ spelled out in Section \ref{ssec:DUhb}. In detail, first observe that
\begin{equation*}
\chk{(\Uh{\mfb}\otimes \msS_\hbar(\mfz_V^+))}\cong \chk{\Uh{\mfb}}\otimes \chk{\msS_\hbar(\mfz_V^+)}\cong \chk{\Uh{\mfb}}\otimes\msS_\hbar(\mfz_V^-).
\end{equation*} 
where the last isomorphism is due to Corollary \ref{C:S+=S-}, and depends on a fixed choice of perfect pairing $\langle\,,\,\rangle:\mfz_\msZ^-\times \mfz_\msZ^+\to \C$; see Corollary \ref{C:S+=S-}. It follows that the Hopf algebra $D(\Uh{\mfb})\otimes \msS_\hbar(\mfz_V^+)\otimes \msS_\hbar(\mfz_V^-)$ satisfies the defining properties \eqref{DUha:1} and \eqref{DUha:2} of $D(\Uh{\mfb}\otimes \msS_\hbar(\mfz_V^+))$. As for the property \eqref{R:can}, the canonical element
\begin{equation*}
\mcR\in (\Uh{\mfb}\otimes \msS_\hbar(\mfz_V^+))\otimes (\Uh{\mfb}\otimes \msS_\hbar(\mfz_V^+))^\ast \subset D(\Uh{\mfb}\otimes \msS_\hbar(\mfz_V^+))^{\otimes 2}
\end{equation*}
admits the factorization $\mcR=R^D e^{\hbar \Omega_\msZ}$, where $\Omega_\msZ\in \mfz_\msZ^+\otimes \mfz_\msZ^-\subset \msS_\hbar(\mfz_V^+)\otimes \msS_\hbar(\mfz_V^-)$ is the canonical element associated to $\langle\,,\,\rangle$. Since $\mfz_\msZ^+$ and $\mfz_\msZ^-$ consist of primitive central elements and $R^D$ is a universal $R$-matrix for $D(\Uh{\mfb})$, the element $\mcR$
indeed defines a quasitriangular structure on $D(\Uh{\mfb})\otimes \msS_\hbar(\mfz_V^+)\otimes \msS_\hbar(\mfz_V^-)$, as required. \qedhere
\end{proof}
\begin{remark}\label{R:Quasi}
The problem of characterizing the quasitriangularity of $R$-matrix algebras defined similarly to $\UR{\mfg}$ (or, more precisely, $\UR{\mfg}^\prime$) has been considered from several different perspectives, perhaps most notably in the work of Majid; see Corollaries 4.1.8--4.1.9  and Lemma 4.1.10 of \cite{Majid-book}, in addition to \cites{Majid-90c,Majid-90d,Majid-90b}.

 The characterization provided by Theorem \ref{T:Quasi} is particularly natural from the point of view Lie bialgebra quantization. Indeed, $\UR{\mfg}$ is a quantization of the Lie bialgebra $\mfg_\scriptr$ (see Theorems \ref{T:g-main} and \ref{T:URg-main}) which, as a Lie algebra, coincides with the trivial central extension $\mfg\oplus (\mfh\oplus \mfz_V^+\oplus \mfz_V^-)$. By Proposition 3.13 and Remark 3.14 of \cite{FarJan-13}, such an extension is a coboundary precisely when the Lie cobracket $\delta_\scriptr$ annihilates $\mfh\oplus \mfz_V^+\oplus \mfz_V^-$. As the central copy of $\mfh$ satisfies $\delta_\scriptr(\mfh)=0$, this occurs exactly when $\mfz_V^\pm$ is trivial as a Lie coalgebra or, equivalently, precisely when $\mfgl(V)^\mfg$ is commutative. 
\end{remark}
%
%
\subsection{From diagonal entries to central elements}\label{ssec:Z-find}
To conclude this section, we obtain an explicit description of the coefficients of the central matrices $\Upsigma^\pm$ defined in Theorem \ref{T:URg->DUhb} in terms of the diagonal entries of $\mrL^\pm$ and $\mrT^\pm$, under the hypothesis that the composition factors $\{V_j\}_{j\in \mcJ}$ of $V$ are pairwise non-isomorphic. 

Let $\{\mathbf{1}_j\}_{j\in \mcJ}\subset \mfgl(V)^\mfg$ be the basis consisting of the orthogonal idempotents $\mathbf{1}_j:V\onto V_j$ associated to the decomposition $V=\bigoplus_{j\in \mcJ}V_j$. We may then write $\Upsilon^{-1}(\mathsf{Z}^\pm)=\sum_{j\in \mcJ} \mathbf{1}_j \otimes \mathsf{z}_j^\pm$, where $\mathsf{Z}^\pm$ is as in \eqref{def:msZ} and $\Upsilon$ is the isomorphism of Theorem \ref{T:URg-main}. It follows that
\begin{equation*}
I+\hbar \Upsigma^\pm=q^{\sum_j \mathbf{1}_j\otimes \mathsf{z}_j^\pm}=\sum_j \mathbf{1}_j \otimes q^{\msz_j^\pm}.
\end{equation*}
In particular, $\{\mathsf{z}_j^\pm\}_{j\in \mcJ}$ are primitive, central elements in $\UR{\mfg}$ which topologically generate a Hopf algebra isomorphic to $\msS(\mfz_\Upsigma^+ \oplus \mfz_\Upsigma^- )[\![\hbar]\!]$, and generate the kernel of $\dot{\Upsilon}:\UR{\mfg}\onto D(\Uh{\mfb})$ as an ideal; see Theorem \ref{T:URg->DUhb} and Section \ref{ssec:g-invar-comm} above. 
\begin{corollary}\label{C:Z-compute}
The $\mfh$-invariant part $\mrL_0^\pm$ of $\mrL^\pm$ admits the block diagonal form 
\begin{equation*}
\mrL_0^\pm=\sum_{j,\lambda}\mathscr{l}_{j,\lambda}^\pm \cdot \mathrm{id}_{V_{j,\lambda}}=I+\hbar\sum_{j,\lambda}\mathrm{t}_{j,\lambda}^\pm \cdot \mathrm{id}_{V_{j,\lambda}}, 
\end{equation*}
where the summation runs  over all pairs $(j,\lambda)\in \mcJ\times \mfh^\ast$ such that $V_{j,\lambda}\neq 0$, and $\{\mathscr{l}_{j,\lambda}=1+\hbar\mathrm{t}_{j,\lambda}^\pm\}_{j,\lambda}$ are grouplike elements generating a commutative subalgebra of $\UR{\mfg}$. Moreover:
\begin{equation*}
\prod_{\lambda\in \mfh^\ast} (\mathscr{l}_{j,\lambda}^\pm)^{\msd_{j,\lambda}}=q^{\pm \dim V_j \cdot \msz_j^\pm} \quad \forall \; j \in \mcJ,
\end{equation*}
where $\msd_{j,\lambda}=\dim V_{j,\lambda}$. In particular, $\msz_j^\pm$ is given explicitly by 
\begin{equation*}
\mathsf{z}_j^\pm=\pm\sum_{\lambda\in \mfh^\ast} \frac{2\msd_{j,\lambda}}{\dim V_j \cdot \hbar} \log(1+\hbar \mathrm{t}_{j,\lambda}^\pm) \quad \forall \; j\in \mcJ.
\end{equation*}
\end{corollary}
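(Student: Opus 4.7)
The plan is to transport the computation via the isomorphism $\Upsilon$ of Theorem~\ref{T:URg-main} to $D(\Uh{\mfb})\otimes\msS_\hbar(\mfz_V^+)\otimes\msS_\hbar(\mfz_V^-)$ and then read off the block structure from the weight decomposition $V=\bigoplus_{j,\lambda}V_{j,\lambda}$.

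First, I would compute $\Upsilon(\mrL_0^\pm)$. From $\Upsilon(\mrL^\pm)=\msL_\omega^\pm\mathring{\mfZ}^\pm$ together with Part~\eqref{R-omega:1} of Proposition~\ref{P:R-omega}, projecting onto the zero-weight component of the first tensor slot annihilates the $\msR^\pm$ factor (whose nontrivial summands carry non-zero weight), leaving
\[
\Upsilon(\mrL_0^\pm)=(\pi_\hbar\otimes\id)(q^{\mp\mathsf{\Omega}_\mfh^\pm})\cdot\mathring{\mfZ}^\pm.
\]
Since $\pi_\hbar|_\mfh=\pi|_\mfh$ and $\pi(\varpi_i^\vee)$ acts on $V_\lambda$ as the scalar $\lambda(\varpi_i^\vee)$, while the orthogonal idempotents $\{\mathbf{1}_j\}$ combined with $q^{\mathsf{Z}^\pm}=I+\hbar\mfZ^\pm$ and $\mathring{\mfZ}^-=(I+\hbar\mfZ^-)^{-1}$ give $\mathring{\mfZ}^\pm=\sum_j\mathbf{1}_j\otimes q^{\pm\Upsilon(\mathsf{z}_j^\pm)}$, this produces the block-diagonal form with
\[
\mathscr{l}_{j,\lambda}^\pm=q^{\pm\mathsf{z}_j^\pm\,\mp\,\sum_i \lambda(\varpi_i^\vee)\,\Upsilon^{-1}(h_i^\pm)}.
\]
Each $\mathscr{l}_{j,\lambda}^\pm$ is grouplike, being $\Upsilon^{-1}$ applied to the exponential of a sum of primitive elements (the coefficients of $\mathsf{Z}^\pm$ are primitive by the discussion preceding Lemma~\ref{L:gl(V)^g}, and each $h_i^\pm\in D(\Uh{\mfb})$ is primitive); they mutually commute since the $h_i^\pm$ commute in $D(\Uh{\mfb})$ and the $\mathsf{z}_j^\pm$ are central in $\UR{\mfg}$.

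Next, I would compute the weighted product by summing exponents. The coefficient of $\mathsf{z}_j^\pm$ becomes $\pm\sum_\lambda\msd_{j,\lambda}=\pm\dim V_j$, while the coefficient of $\Upsilon^{-1}(h_i^\pm)$ is $\mp\sum_\lambda\msd_{j,\lambda}\lambda(\varpi_i^\vee)$. The key observation is that $\sum_\lambda \msd_{j,\lambda}\lambda\in\mfh^\ast$ vanishes, as its evaluation on any $h\in\mfh$ equals $\mathrm{tr}_{V_j}\pi(h)$, and this trace is zero because $\mfh\subset[\mfg,\mfg]$ by semisimplicity of $\mfg$. Thus the $\Upsilon^{-1}(h_i^\pm)$ contributions cancel, yielding $\prod_\lambda (\mathscr{l}_{j,\lambda}^\pm)^{\msd_{j,\lambda}}=q^{\pm\dim V_j\cdot\mathsf{z}_j^\pm}$.

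To conclude, I would exploit the mutual commutativity of the $\mathscr{l}_{j,\lambda}^\pm$ and the fact that $\log(1+\hbar\mathrm{t}_{j,\lambda}^\pm)$ is well-defined in the $\hbar$-adic topology to take logarithms of both sides, then rearrange via $q=e^{\hbar/2}$ and divide by $\pm(\hbar/2)\dim V_j$ to isolate $\mathsf{z}_j^\pm$. The only real obstacle is careful bookkeeping of signs, arising from the interplay of $\mathring{\mfZ}^\pm=(I+\hbar\mfZ^\pm)^{\pm 1}$ with $q^{\mathsf{Z}^\pm}=I+\hbar\mfZ^\pm$ together with the opposite signs appearing in $(\msL_\omega^+)_0$ versus $(\msL_\omega^-)_0$.
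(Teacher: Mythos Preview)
Your proposal is correct and follows essentially the same route as the paper: transport through $\Upsilon$, use the factorization of $\msR_\omega^\pm$ from Proposition~\ref{P:R-omega}\eqref{R-omega:1} to compute $(\msL_\omega^\pm)_0$, read off the scalar blocks on each $V_{j,\lambda}$, and then observe that the weighted product over $\lambda$ kills the $h_i^\pm$ contribution because $\sum_\lambda \msd_{j,\lambda}\,\lambda(\varpi_i^\vee)=\mathrm{Tr}_{V_j}\pi(\varpi_i^\vee)=0$. The paper's proof is organized identically, invoking the computation already done in the proof of Lemma~\ref{L:Theta} for $\Upsilon(\mrL_0^\pm)$; your added remarks on why the $\mathscr{l}_{j,\lambda}^\pm$ are grouplike and commute are a welcome elaboration but not a different argument.
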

\begin{proof}
We have seen in the proof of Lemma \ref{L:Theta} that
\begin{equation*}
\Upsilon(\mrL_0^\pm)=q^{\mp\sum_{i\in \mbI} \pi(\omega_i^\vee)\otimes h_i^\pm}\mathring{\mfZ}^\pm=q^{\mp\sum_{i\in \mbI} \pi(\omega_i^\vee)\otimes h_i^\pm}\sum_{j\in \mcJ}\mathbf{1}_j\otimes q^{\pm \Upsilon(\msz_j^\pm)}.
\end{equation*}
Hence, on each weight space $V_{j,\lambda}$ of $V_j$, we have
\begin{equation*}
\Upsilon(\mrL_0^\pm|_{V_{j,\lambda}})=q^{\mp\sum_{i\in \mbI} \lambda(\omega_i^\vee)h_i^\pm}q^{\pm \Upsilon(\msz_j^\pm)} \cdot \mathrm{id}_{V_{j,\lambda}}.
\end{equation*}
The first assertion of the corollary therefore follows by taking $\mathscr{l}_{j,\lambda}^\pm$ and $\mathrm{t}_{j,\lambda}^\pm$ to be the elements uniquely determined by the formulas 
\begin{equation}\label{l-j-lambda}
\mathscr{l}_{j,\lambda}^\pm=1+\hbar \mathrm{t}_{j,\lambda}^\pm=\Upsilon^{-1}\left(q^{\mp\sum_{i\in \mbI} \lambda(\omega_i^\vee)h_i^\pm}\right)q^{\pm \msz_j^\pm}.
\end{equation}
The second assertion of the corollary now follows from the observation that, for each $j\in \mcJ$, we have
\begin{equation*}
\prod_{\lambda\in \mfh^\ast} \Upsilon(\mathscr{l}_{j,\lambda}^\pm q^{\mp \msz_j^\pm} )^{\msd_{j,\lambda}}
=
\prod_{\lambda\in \mfh^\ast} q^{\mp\sum_{i\in \mbI} \msd_{j,\lambda} \lambda(\omega_i^\vee)h_i^\pm}=q^{\mp\sum_{i\in \mbI} \mathsf{Tr}_{V_j}(\pi(\omega_i^\vee))\cdot h_i^\pm}=1. \qedhere
\end{equation*}
\end{proof}
\begin{remark}\label{R:weight-basis}
Suppose that the underlying basis $\{v_i\}_{i\in \mcI}$ of $V$ is taken to be a weight basis which is compatible with the $\mfg$-module decomposition $V=\bigoplus_{j\in \mcJ} V_j$. That is, $\mcI$ admits a partition $\mcI=\bigsqcup_{j}\mcI_j=\bigsqcup_{j,\lambda}\mcI_{j,\lambda}$ for which $\{v_i\}_{i\in \mcI_{j}}$ and  $\{v_i\}_{i\in \mcI_{j,\lambda}}$ are bases of $V_j$ and $V_{j,\lambda}$, respectively, and is ordered so that, for each $j\in \mcJ$, the elements of $\mcI_{j,\lambda}$ precede those in $\mcI_{j,\gamma}$ if $\gamma-\lambda\in \msQ_+$.   Theorem \ref{T:URg-main} then implies that, with respect to the basis, $\mrL^+$ and $\mrL^-$ are lower and upper triangular matrices, respectively, and $\mrL_0^+$ and $\mrL_0^-$ are the diagonal factors in their Gaussian decompositions 
\begin{equation*}
\mathrm{L}^+=\mathrm{L}_0^+\left(I+ \hbar\mathsf{X}^+\right) \quad \text{ and }\quad \mathrm{L}^-=\left(I+ \hbar\mathsf{X}^-\right)\mathrm{L}_0^-,
\end{equation*}
where $\msX^+$ and $\msX^-$ are strictly lower and upper triangular, respectively. Then, by Corollary \ref{C:Z-compute}, the diagonal entries $\mathscr{l}_{ii}^\pm$ and $\mathscr{l}_{kk}^\pm$ of $\mrL^\pm$ (resp. $t_{ii}^\pm$ and $t_{kk}^\pm$ of $\mrT^\pm$) coincide for any $i,k\in\mcI_{j,\lambda}$, and are equal to $\mathscr{l}_{j,\lambda}^\pm$ (resp. $\mathrm{t}_{j,\lambda}^\pm$). In particular, one has 
\begin{gather*}
\prod_{i\in \mcI_{j,\lambda}} \mathscr{l}_{ii}^\pm=q^{\pm \dim V_j \cdot \msz_j^\pm}, \\
\mathsf{z}_j^\pm=\pm\frac{2}{\dim V_j} \sum_{k>0}\frac{(-\hbar)^{k-1}}{k}\mathsf{Tr}_{V_j}((\mrT^{\pm})^k),
\end{gather*}
for each $j\in \mcJ$. By Theorem \ref{T:URg->DUhb}, $D(\Uh{\mfb})$ is isomorphic to the unital associative $\C[\![\hbar]\!]$-algebra topologically generated by $\{t_{ik}^\pm\}_{i,k\in \mcI}$, subject to the relations \eqref{URg-T-tri}--\eqref{URg-RTT:2} and
\begin{equation*}
\sum_{k>0}\frac{(-\hbar)^{k-1}}{k}\mathsf{Tr}_{V_j}((\mrT^{\pm})^k)=0 \quad \forall \; j\in \mcJ.
\end{equation*}
Moreover, by Theorem \ref{T:URg->Uhg} and \eqref{l-j-lambda}, to recover $\Uh{\mfg}$ as a quotient of $\UR{\mfg}$ one imposes the additional family of relations 
\begin{equation*}
\hbar t_{ii}^+t_{ii}^-=-(t_{ii}^+ + t_{ii}^-)=\hbar t_{ii}^-t_{ii}^+ \quad \forall \; i\in \mcI. 
\end{equation*}
In particular, in this quotient one has the familiar identities 
\begin{equation*}
\prod_{i\in \mcI_{j,\lambda}} \mathscr{l}_{ii}^\pm=1\quad \text{ and }\quad 
\mathscr{l}_{kk}^+\mathscr{l}_{kk}^-=1=\mathscr{l}_{kk}^-\mathscr{l}_{kk}^+
\end{equation*}
for all $k\in \mcI$, and pairs $(j,\lambda)\in \mcJ\times \mfh^\ast$ for which $V_{j,\lambda}\neq 0$ (\textit{cf}. \cite{FRT}*{\S2.2}).
\end{remark}
\begin{remark}\label{R:zero-weight}
If $\lambda=0$ is a weight of $V_j$, then by \eqref{l-j-lambda}, we have 
\begin{equation*}
\mathscr{l}_{j,\lambda}^\pm=\Upsilon^{-1}\left(q^{\mp\sum_{i\in \mbI} \lambda(\omega_i^\vee)h_i^\pm}\right)q^{\pm \msz_j^\pm}=q^{\pm \msz_j^\pm}.
\end{equation*}
For example, if $V$ is the adjoint representation of $\mfg$, then we may write $\mathscr{l}_{\lambda}^\pm=\mathscr{l}_{j,\lambda}^\pm$ and $\msz^\pm=\msz_j^\pm$, and the diagonal part $\mrL_0^\pm$ of $\mrL^\pm$ decomposes as
\begin{equation*}
\mrL_0^\pm = q^{\pm \msz^\pm}\cdot \mathrm{id}_\mfh + \sum_{\alpha\in \Root^+} (\mathscr{l}_{\alpha}^\pm \cdot \mathrm{id}_{\mfg_\alpha} + \mathscr{l}_{-\alpha}^\pm \cdot \mathrm{id}_{\mfg_{-\alpha}}).
\end{equation*}
\end{remark}
%

\section{The vector representation of \texorpdfstring{$\mathfrak{sl}_n$}{sl\_n}}\label{sec:Ex}

In this final section, we narrow our focus to the special case where $\mcV$ is the vector representation of  $\Uh{\mfsl_n}$ (see Section \ref{ssec:V-sln}) with the intention of providing a detailed example of our main results and illustrating how they recover some well-known constructions. 
\subsection{The natural representations of $\mfsl_n$ and $\Uh{\mfsl_n}$}\label{ssec:V-sln} Let us now restrict our attention to the special linear Lie algebra $\mfg=\mfsl_n$, where $n\geq 2$. We take $\mbI=\{1,\ldots,n-1\}$, so that the Cartan matrix $(a_{ij})_{i,j\in \mbI}$ is given by 
\begin{equation*}
a_{ij}
=2\delta_{ij}-\delta_{i+1,j}-\delta_{i-1,j} \quad \forall \; i,j \in \mbI.
\end{equation*}
In addition, we henceforth fix $V=\C^n$ to be the natural representation of $\mfg=\mfsl_n$, with $\{v_i\}_{i\in \mcI}\subset \C^n$ taken to be its standard basis. That is, $\mcI=\mbI\cup\{n\}$ and $v_i=e_i$ for each $1\leq i\leq n$.   The associated algebra homomorphism $\pi:U(\mfsl_n)\to \End(V)$ outputs the standard realization of $\mfsl_n$, and is given explicitly by 
\begin{equation*}
\pi(h_i)=E_{ii}-E_{i+1,i+1}, \quad \pi(x_i^+)=E_{i,i+1}, \quad \pi(x_i^-)=E_{i+1,i} \quad \forall \; i\in \mbI,
\end{equation*}
where $h_i$ and $x_i^\pm$ are as in Sections \ref{ssec:g} and \ref{ssec:Db-Chev}, respectively. 

In this case, the action of $\mfg=\mfsl_n$ on the general linear Lie algebra $\mfgl(V)=\mfgl_n$ introduced in Section \ref{ssec:gl(V)} coincides with the standard adjoint action of $\mfsl_n$ on $\mfgl_n$. In particular, one has the $\mfsl_n$-module decomposition $\mfgl(V)=\mathrm{ad}(\mfsl_n)\oplus \C I$, with weight space decomposition 
\begin{equation}\label{sln-gl(V)}
\mfgl(V)=\bigoplus_{i,j}\mfgl(V)_{\eps_i-\eps_j},
\end{equation}
where $\mfgl(V)_0=\mfgl(V)^\mfh=\mfh\oplus \C I$  is the space of all diagonal matrices, and 
\begin{equation*}
\mfgl(V)_{\eps_i-\eps_j}=\Hom(V_{\eps_j},V_{\eps_i})=\C E_{ij} \quad \forall \; i\neq j.
\end{equation*}
 Here $\{\eps_i\}_{i\in \mcI}\subset \mfh^\ast$ are defined by $\eps_i(h_j)=\delta_{i,j}-\delta_{i,j+1}$ for all $j\in \mbI$, so that $\Root^+=\{\eps_i-\eps_j:i<j\}$ is the standard set of positive roots. 

Consider now $\Uh{\mfsl_n}$. By the results recalled in Section \ref{ssec:Uhg-fdreps}, there exists a (unique, up to isomorphism) $\Uh{\mfsl_n}$-module structure on $\mcV=\C^n[\![\hbar]\!]$ with the property that the associated algebra homomorphism 
\begin{equation*}
\pi_\hbar:\Uh{\mfsl_n}\to \End_{\C[\![\hbar]\!]}(\mcV)\cong \End(\C^n)[\![\hbar]\!]
\end{equation*}
has semiclassical limit $\bar{\pi}_\hbar=\pi$. In fact, the assignment
\begin{equation*}
\pi_\hbar(h_i)=E_{ii}-E_{i+1,i+1}, \quad \pi_\hbar(E_i)=E_{i,i+1},\quad \pi_\hbar(F_i)=E_{i+1,i} \quad \forall \;i \in \mbI
\end{equation*}
uniquely extends to an algebra homomorphism with the desired property, as is readily verified using Definition \ref{D:Uhg}; see \cite{CPBook}*{Ex.~8.3.17}, for instance. Note that $\pi_\hbar$ also satisfies the auxiliary conditions $\pi_\hbar|_{\mfh}=\pi|_\mfh$ and $\pi_\hbar(\Uh{\mfsl_n})\subset \pi(U(\mfsl_n))[\![\hbar]\!]$ imposed in Section \ref{ssec:URg-def}. Moreover, in this particular case, the evaluation $\mrR_\pi=(\pi_\hbar\otimes \pi_\hbar)(R)$ of the universal $R$-matrix of $\Uh{\mfsl_n}$ is not difficult to compute using the explicit factorizations established in \cite{KR90} and \cite{LeSo90}, and is given  by 
\begin{equation}\label{sln:R}
q^{1/n}\mrR_\pi=\sum_{i,j}q^{\delta_{ij}}E_{ii}\otimes E_{jj} +(q-q^{-1})\sum_{i<j} E_{ij}\otimes E_{ji}.
\end{equation}
We refer the reader to \cite{CPBook}*{\S8.3.G} or \cite{KS-book}*{\S8.4.2} for a detailed derivation of this formula, in addition to equation (3.7) of \cite{FRT}*{Thm.~18} and \cite{Jimbo86}.

\subsection{The $R$-matrix realization of $\Uh{\mfsl_n}$ associated to $\C^n$}
We now focus on illustrating some of the main results of this paper in the special case where $\mfg$, $V$, $\{v_i\}_{i\in \mcI}$ and $\mrR_\pi$ are as in the previous subsection. 

To begin, note that Definition \ref{D:URg} for $\UR{\mfsl_n}$ collapses to the following concrete definition: $\UR{\mfsl_n}$ is the unital associative $\C[\![\hbar]\!]$-algebra topologically  generated by $\{t_{ij}^\pm\}_{i\in \mcI}$, subject only to the relations 
\begin{gather}
t_{ij}^+=0=t_{ji}^- \quad \forall\quad i>j,\label{sln-triangle}\\
\begin{aligned}\label{sln:RLL}
q^{\delta_{ik}}t_{ij}^{\sigma_1}&t_{kl}^{\sigma_2}-q^{\delta_{j l}}t_{kl}^{\sigma_2}t_{ij}^{\sigma_1}\\
=\,&\delta_{kl}(q^{\delta_{jk}}-q^{\delta_{ik}})t_{ij}^{\sigma_1}+\delta_{ij}(q^{\delta_{il}}-q^{\delta_{ik}})t_{kl}^{\sigma_2}
\\
&+
\frac{(q-q^{-1})}{\hbar}\left( \delta_{l<j}\mathscr{l}_{kj}^{\sigma_2}t_{il}^{\sigma_1}-\delta_{i<k}t_{kj}^{\sigma_1}\mathscr{l}_{il}^{\sigma_2}+\delta_{i<j}(\delta_{il}t_{kj}^{\sigma_2}-\delta_{kj}t_{il}^{\sigma_2})\right),
\end{aligned}
\end{gather}
where $(\sigma_1,\sigma_2)$ takes value $(\pm,\pm)$ or $(+,-)$, $\mathscr{l}_{ij}^\pm=\delta_{ij}+\hbar t_{ij}^\pm$ for all $i,j\in\mcI$, and we recall that $q=e^{\hbar/2}$.
%

Indeed, from \eqref{sln-gl(V)} and the remarks that follows it, the projection $\mrT_\lambda^\pm$ of $\mrT^\pm$ onto $\mfgl(V)_\lambda\otimes_\C \UR{\mfsl_n}$ is zero unless $\lambda=\eps_i-\eps_j$ for some $1\leq i,j\leq n$, in which case 
\begin{equation*}
\mrT_0^\pm=\sum_{i=1}^n E_{ii}\otimes t_{ii}^\pm \quad \text{ and }\quad \mrT^\pm_{\eps_i-\eps_j}=E_{ij}\otimes t_{ij}^\pm \quad \forall \; i\neq j.
\end{equation*}
Hence, the set of relations \eqref{sln-triangle} is equivalent to the triangularity relations \eqref{URg-T-tri}.
\begin{remark}
In particular, $\mrT^+$ and $\mrT^-$ are upper and lower triangular, respectively. 
Note that this is opposite to the situation described in Remark \ref{R:weight-basis}; this is due to the fact that the natural ordering on the standard basis of $\C^n$ is dual to the partial ordering on $\mfsl_n$-weights: Indeed, if $i>j$ then $\eps_i-\eps_j\in \msQ_-$ and so $\eps_i<\eps_j$.
\end{remark}
Similarly, the relations \eqref{URg-RTT:1} and \eqref{URg-RTT:2} are equivalent to \eqref{sln:RLL} with $(\sigma_1,\sigma_2)=(\pm,\pm)$ and $(\sigma_1,\sigma_2)=(+,-)$, respectively. This is easily seen by inputting $\dot\mrR_\pi=\hbar^{-1}(\mrR_\pi-1)$ into \eqref{URg-RTT:1} and \eqref{URg-RTT:2}, with $\mrR_\pi$ as in \eqref{sln:R}, and then taking the coefficient of $E_{ij}\otimes E_{kl}$ in both relations and simplifying.   

Next, since $\C^n$ is an irreducible representation of $\mfsl_n$, the space of $\mfg$-invariants $\mfgl(V)^\mfg$ coincides with $\C I$. Theorem \ref{T:URg-main} therefore outputs an isomorphism of topological Hopf algebras 
\begin{equation*}
\UR{\mfsl_n}\cong D(\Uh{\mfb})\otimes\C[\msz^+,\msz^-][\![\hbar]\!],
\end{equation*}
where $\mfb\subset \mfsl_n$ is the standard Borel subalgebra of upper triangular matrices in $\mfsl_n$, and we have used that $\msS_\hbar(\mfz_V^\pm)\cong \msS(\mfz_\msZ^\pm)[\![\hbar]\!]\cong \C[\msz^\pm][\![\hbar]\!]$; see Section \ref{ssec:g-invar-comm}. Moreover, by Corollary \ref{C:Z-compute} and Remark \ref{R:weight-basis}, the primitive central elements $\msz^\pm$ are uniquely determined in $\UR{\mfsl_n}$ by the formulas 
\begin{equation*}
q^{\pm n\msz^\pm}=\mathscr{l}_{11}^\pm \mathscr{l}_{22}^\pm \cdots \mathscr{l}_{nn}^\pm=\prod_{i=1}^n \mathscr{l}_{ii}^\pm. 
\end{equation*}

In addition, by Theorem \ref{T:Quasi}, $\UR{\mfsl_n}$ is isomorphic to the quantum double of $\Uh{\mfb}\otimes \C[\msz^+]$, which itself is isomorphic to the subalgebra of $\UR{\mfsl_n}$ topologically generated by $\{t_{ij}^+\}_{i,j\in \mcI}$; see Corollary \ref{C:URb}.

Finally, as a consequence of the above conclusions, the formulas for the Hopf algebra structure maps given in Theorem \ref{T:URg-main}, the statement of Theorem \ref{T:URg->Uhg}, and Remark \ref{R:weight-basis}, we obtain the following characterization of the quantized enveloping algebra $\Uh{\mfsl_n}$.
\begin{corollary}\label{C:URg->Uhsln}
$\Uh{\mfsl}_n$ is isomorphic to the unital, associative $\C[\![\hbar]\!]$-algebra topologically generated by $\{t_{ij}^\pm\}_{i,j\in \mcI}$, subject to the relations 
\begin{equation}\label{Uhsln-def}
\begin{gathered}
t_{ij}^+=0=t_{ji}^- \quad \forall\quad i>j,\\
\begin{aligned}
q^{\delta_{ik}}t_{ij}^{\sigma_1}&t_{kl}^{\sigma_2}-q^{\delta_{j l}}t_{kl}^{\sigma_2}t_{ij}^{\sigma_1}\\
=\,&\delta_{kl}(q^{\delta_{jk}}-q^{\delta_{ik}})t_{ij}^{\sigma_1}+\delta_{ij}(q^{\delta_{il}}-q^{\delta_{ik}})t_{kl}^{\sigma_2}
\\
&+
\frac{(q-q^{-1})}{\hbar}\left( \delta_{l<j}\mathscr{l}_{kj}^{\sigma_2}t_{il}^{\sigma_1}-\delta_{i<k}t_{kj}^{\sigma_1}\mathscr{l}_{il}^{\sigma_2}+\delta_{i<j}(\delta_{il}t_{kj}^{\sigma_2}-\delta_{kj}t_{il}^{\sigma_2})\right),
\end{aligned}
\\[5pt]
\hbar t_{ii}^+t_{ii}^- = -(t_{ii}^+ + t_{ii}^-)=\hbar t_{ii}^-t_{ii}^+,\\
\sum_{i=1}^n \sum_{k>0}\frac{1}{k}(-\hbar)^{k-1}( t_{ii}^\pm)^k=0,
\end{gathered}
\end{equation}
where $(\sigma_1,\sigma_2)$ takes value $(\pm,\pm)$ or $(+,-)$, and $\mathscr{l}_{ij}^\pm=\delta_{ij}+\hbar t_{ij}^\pm$ for all $i,j\in\mcI$. Moreover, the coproduct $\Delta$, counit $\veps$ and antipode $S$ on $\Uh{\mfsl_n}$ are determined by
\begin{gather*}
\Delta(t_{ij}^\pm)=t_{ij}^\pm \otimes 1 + 1\otimes t_{ij}^\pm + \hbar \sum_{a=1}^n t_{ia}^\pm \otimes t_{aj}^\pm, \quad \veps(t_{ij}^\pm)=0, \\
S(t_{ij}^\pm)=-t_{ij}^\pm -\hspace{-1em}\sum_{\substack{b> 0\\ 1\leq a_1,\ldots,a_{b}\leq n}} \hspace{-1em}(-\hbar)^{b} t_{i,a_1}^\pm t_{a_1,a_2}^\pm\cdots t_{a_{b},j}^\pm.
\end{gather*}
\end{corollary}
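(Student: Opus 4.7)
The plan is to specialize Theorems \ref{T:URg-main} and \ref{T:URg->Uhg} to $\mfg=\mfsl_n$ and $V=\C^n$. First, I would derive the concrete presentation of $\UR{\mfsl_n}$ by substituting the explicit expression \eqref{sln:R} for $\mrR_\pi$ into the abstract defining relations \eqref{URg-T-tri}--\eqref{URg-RTT:2} and reading off the coefficient of $E_{ij}\otimes E_{kl}$. The triangularity \eqref{URg-T-tri} immediately becomes \eqref{sln-triangle}, since the only non-zero weight spaces of $\mfgl(\C^n)$ are the one-dimensional $\C E_{ij}$ for $i\neq j$ together with the Cartan, so $\mrT^\pm_{\varepsilon_i-\varepsilon_j}=E_{ij}\otimes t_{ij}^\pm$ vanishes precisely when the sign convention forces $t_{ij}^\pm = 0$. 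The quadratic relations \eqref{URg-RTT:1}--\eqref{URg-RTT:2} expand into \eqref{sln:RLL} by direct index manipulation.

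Since $V=\C^n$ is irreducible, $\mfgl(V)^\mfg=\C I$ is one-dimensional, so by Theorem \ref{T:URg-main} we have $\UR{\mfsl_n}\cong D(\Uh{\mfb})\otimes \msS_\hbar(\mfz_V^+)\otimes \msS_\hbar(\mfz_V^-)$ with $\msS_\hbar(\mfz_V^\pm)\cong\C[\msz^\pm][\![\hbar]\!]$; see Section \ref{ssec:g-invar-comm}. By Theorem \ref{T:URg->Uhg}, $\Uh{\mfsl_n}$ is the quotient of $\UR{\mfsl_n}$ by the ideal generated by the coefficients of $\Theta$, $\Upsigma^+$, and $\Upsigma^-$. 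Since $\mrT^\pm$ is triangular, so that $\mathsf{Tr}((\mrT^\pm)^k)=\sum_i(t_{ii}^\pm)^k$, Remark \ref{R:weight-basis} identifies $\msz^\pm$, up to the scalar factor $\pm 2/n$, with $\sum_i\sum_{k>0}\tfrac{(-\hbar)^{k-1}}{k}(t_{ii}^\pm)^k$; setting these to zero yields the last family of relations in \eqref{Uhsln-def}. With $\Upsigma^\pm=0$, Lemma \ref{L:Theta} reduces to $\mrL_0^+\mrL_0^-=q^\Theta$, and imposing $\Theta=0$ on the diagonal matrix $\mrL_0^+\mrL_0^-$ yields $\mathscr{l}_{ii}^+\mathscr{l}_{ii}^-=1$ for each $i$, equivalent to $t_{ii}^++t_{ii}^-+\hbar t_{ii}^+t_{ii}^-=0$. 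The companion identity $\hbar t_{ii}^-t_{ii}^+=-(t_{ii}^++t_{ii}^-)$ then follows because the $\mrL^\pm$ are invertible in $\UR{\mfsl_n}$, with inverses given by the antipode images $S_\mrR(\mrL^\pm)$ from Theorem \ref{T:URg-main}.

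The Hopf structure formulas are obtained by expanding the matrix-valued identities $\Delta_\mrR(\mrL^\pm)=\mrL_{[1]}^\pm\mrL_{[2]}^\pm$, $S_\mrR(\mrL^\pm)=(\mrL^\pm)^{-1}$, and $\veps_\mrR(\mrL^\pm)=I$ from Theorem \ref{T:URg-main} entrywise, then passing from $\mathscr{l}_{ij}^\pm$ to $t_{ij}^\pm$ via $t_{ij}^\pm=\hbar^{-1}(\mathscr{l}_{ij}^\pm-\delta_{ij})$. The antipode formula requires inverting $I+\hbar\mrT^\pm$ via the Neumann series $(I+\hbar\mrT^\pm)^{-1}=\sum_{b\geq 0}(-\hbar)^b(\mrT^\pm)^b$, whose $(i,j)$-entry expands as the stated sum over tuples $(a_1,\dots,a_b)$ after shifting the summation index by one to absorb the $-t_{ij}^\pm$ term. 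The main obstacle in the whole argument is the mechanical but lengthy verification of \eqref{sln:RLL} from \eqref{URg-RTT:1}--\eqref{URg-RTT:2}, which requires careful index bookkeeping for the cross terms generated by the off-diagonal summand $(q-q^{-1})\sum_{a<b}E_{ab}\otimes E_{ba}$ of $\mrR_\pi$ interacting with the extra $\hbar(\dot\mrR_\pi\mrT_1\mrT_2-\mrT_2\mrT_1\dot\mrR_\pi)$ correction.
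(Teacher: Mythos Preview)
Your proposal is correct and follows essentially the same approach as the paper, which derives the corollary by specializing Theorems \ref{T:URg-main} and \ref{T:URg->Uhg} together with Remark \ref{R:weight-basis} to $\mfg=\mfsl_n$ and $V=\C^n$. The only minor variation is your justification of the symmetric relation $\hbar t_{ii}^-t_{ii}^+=-(t_{ii}^++t_{ii}^-)$ via invertibility of $\mathscr{l}_{ii}^\pm$, whereas the paper reads both orderings off directly from \eqref{l-j-lambda}; either route is valid.
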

%
%
\subsection{Remarks}
We conclude our analysis of the $R$-matrix algebra $\UR{\mfsl_n}$ associated to the natural representation $V=\C^n$ of $\mfsl_n$ with a sequence of remarks:

\begin{enumerate}\setlength{\itemsep}{3pt}
\item The topological generators $\{\mathscr{l}_{ij}^\pm\}_{i,j\in \mcI}$ of the quantum formal series Hopf algebra $(\Uh{\mfsl_n})^\prime\subset \Uh{\mfsl_n}$ (see Section \ref{ssec:Uhb} and Remark \ref{R:URg-main}) satisfy the algebraic relations 
\begin{gather*}
\mathscr{l}_{ij}^+=0=\mathscr{l}_{ji}^- \quad \forall\quad i>j,\\
q^{\delta_{ik}}\mathscr{l}_{ij}^{\sigma_1}\mathscr{l}_{kl}^{\sigma_2}
-q^{\delta_{j l}}\mathscr{l}_{kl}^{\sigma_2}\mathscr{l}_{ij}^{\sigma_1}
=(q-q)^{-1}\left( \delta_{l<j}\mathscr{l}_{kj}^{\sigma_2}\mathscr{l}_{il}^{\sigma_1}-\delta_{i<k}\mathscr{l}_{kj}^{\sigma_1}\mathscr{l}_{il}^{\sigma_2}\right),
\\
\mathscr{l}_{ii}^+\mathscr{l}_{ii}^-=1=\mathscr{l}_{ii}^-\mathscr{l}_{ii}^+,\\
\mathscr{l}_{11}^\pm \mathscr{l}_{22}^\pm \cdots \mathscr{l}_{nn}^\pm=1,
\end{gather*}
where $(\sigma_1,\sigma_2)$ takes value $(\pm,\pm)$ or $(+,-)$. In addition, one has $S(\mrL^\pm)=(\mrL^\pm)^{-1}$, while 
\begin{equation*}
\Delta(\mathscr{l}_{ij}^\pm)=\sum_{a=1}^n \mathscr{l}_{ia}^\pm \otimes \mathscr{l}_{aj}^\pm \quad \text{ and }\quad \veps(\mathscr{l}_{ij}^\pm)=1
\quad \forall\; i,j\in \mcI.
\end{equation*}
It is these relations that have predominantly appeared in the literature, rather than those of Corollary \ref{C:URg->Uhsln}; see \S2.2 and Theorem 12 of \cite{FRT}, in addition to \cite{KS-book}*{\S8.5}, \cite{GoMo10}*{\S2.1}, \cite{MRS}*{\S2} and \cite{DF93}*{\S2}, for example. We emphasize that they \textit{are not} defining relations for the quantized enveloping algebra $\Uh{\mfsl_n}$. However, they are defining relations for its $\C(q)$ form $U_q(\mfsl_n)$, and this is the context in which they have primarily arisen in the literature.

\item Removing the last relation in \eqref{Uhsln-def} of Corollary \ref{C:URg->Uhsln} yields a presentation of the quantized enveloping algebra $\Uh{\mfgl_n}$ of  the general linear Lie algebra $\mfgl_n$; see \cite{Jimbo86}*{\S2}  and \cite{DF93}*{Thm.~2.1}. It may also be realized as the subalgebra of $\UR{\mfsl_n}$ consiting of those elements fixed by all automorphisms of the form $\mathring{\upchi}_C\circ \upgamma_h$, for $h\in \mfh$ and $C\in \mathrm{GL}_I(V)^\mfg \cong 1+\hbar\C[\![\hbar]\!]$. Here $\upgamma_h$ is as in \eqref{upgamma-h} and we have set $\mathring{\upchi}_C=\upchi_\mathscr{C}$ with $\mathscr{C}=(C,C^{-1})$ (see Proposition \ref{P:aut}). Explicitly,   $\mathring{\upchi}_C\circ \upgamma_h$ is uniquely determined by 
\begin{equation*}
\mrL^\pm \mapsto q^{-\pi(h)/2} \mrL^\pm q^{-\pi(h)/2} \cdot C^{\pm 1}.
\end{equation*}
\item Finally, we note that it is possible to repeat the analysis carried out in this section in the special case where $\mfg$ is of symplectic or orthogonal type and $V$ is its vector representation. In this setting, all the underlying data is available; see (1.9) and (3.7) in \cite{FRT}, \cite{CPBook}*{\S8.3.G}, \cite{KS-book}*{\S8.5} and the articles \cites{JLM20-c,JLM20-b,GRWqLoop}. It is worth emphasizing that in this setting the underlying $V$ is again irreducible and all the observations from Remark \ref{R:weight-basis} apply (\textit{cf.} \cite{FRT}*{Rem.~21} and \cite{KS-book}*{Prop.~8.28}). 
\end{enumerate}


\bibliography{Yangians}

\end{document}